\newtheorem{theorem}{Theorem}[subsection]  %Number theorems with subsections.
\newtheorem{lemma}[theorem]{Lemma}
\newtheorem{proposition}[theorem]{Proposition}
\newtheorem{corollary}[theorem]{Corollary}
\theoremstyle{definition}
\newtheorem{definition}[theorem]{Definition}
\newtheorem{example}[theorem]{Example}
\newtheorem{remark}[theorem]{Remark}
\newtheorem{specialcases}[theorem]{Special Cases}
\DeclareMathOperator\ad{ad}
\DeclareMathOperator\Aut{Aut}
\DeclareMathOperator{\Centrd}{C}
\DeclareMathOperator\diag{diag}
\DeclareMathOperator\End{End}
\DeclareMathOperator\FSk{FSkew}
\DeclareMathOperator\GL{GL}
\DeclareMathOperator\id{id}
\DeclareMathOperator\Hom{Hom}
\DeclareMathOperator\Pic{Pic}
\DeclareMathOperator\Spec{Spec}
\DeclareMathOperator\supp{supp}
\DeclareMathOperator\tr{tr}
\DeclareMathOperator\Sk{Skew}
\DeclareMathOperator\spann{span}
\newcommand\al{\alpha}
\newcommand\ep{\varepsilon}
\newcommand\De{\Delta}
\newcommand\lm{\lambda}
\newcommand\ph{\varphi}
\newcommand\sg{\sigma}
\newcommand\bbC{\mathbb{C}}
\newcommand\bbF{\mathbb{F}}
\newcommand\bbT{\mathbb{T}}
\newcommand\bbZ{\mathbb{Z}}
\newcommand\cE{\mathcal{E}}
\newcommand\cF{\mathcal{F}}
\newcommand\cG{\mathcal{G}}
\newcommand\cH{\mathcal{H}}
\newcommand\cP{\mathcal{P}}
\newcommand\cS{\mathcal{S}}
\newcommand\cT{\mathcal{T}}
\newcommand\bP{{\breve P}}
\newcommand\brv{\breve{\hphantom{a}}} %Empty breve
\newcommand\fg{\mathfrak{g}}
\newcommand\fp{\mathfrak{p}}
\newcommand\Kan{\mathfrak{K}}
\newcommand\fS{\mathfrak{S}}
\newcommand\tE{\tilde{\cE}}
\newcommand\ttheta{\tilde{\theta}}
\newcommand\tS{\tilde{\cS}}
\newcommand\rL{L}  %For general Lie algebras.
\newcommand\bR{\mathbb{K}} %For the base ring.
\newcommand \andd{\quad\text{and}\quad}
\newcommand\mat[1]{\left[\begin{matrix} #1 \end{matrix}\right]}  %Macro for matrices
\newcommand\smat[1]{\left[\begin{smallmatrix} #1 \end{smallmatrix}\right]}  %Macro for small matrices
\newcommand \suchthat { : }
\newcommand \set[1]{\{#1 \}}
\newcommand\type[2]{\rm{#1}_{#2}}  %For the type of a root system
\newcommand\BCone{\type{BC}{1}}
\newcommand\BCtwo{\type{BC}{2}}
\newcommand\Esix{\type{E}{6}}
\newcommand\Kalg{\bR \mathrm{-alg}}
\newcommand\msg{{-\sg}}
\newcommand\op{{\textrm{op}}}
\newcommand\p[1]{{}^{#1}}
\newcommand\tprod{{   \{\, ,\, ,\, \} }}
\newcommand\tripF{(M^-_\bbF,M^+_\bbF,g_\bbF)}
\newcommand\fo{{\mathfrak o}}
\newcommand\ffo{{\mathfrak{fo}}}
\newcommand\tg{{\tilde g}}
\newcommand\TLP{{T_L(P)}}
\newcommand\tV{\tilde V}
\newcommand\Vm{V^-}
\newcommand\Vp{V^+}
\newcommand\Vs{V^\sg}
\newcommand\Wg{{\textstyle \bigwedge}}
\newcommand\Xn{\type{X}{n}}
\DeclareRobustCommand\Wgref{\Wg_3\hspace{-4pt}\breve{}\hspace{4pt}}
\newcommand\Lbp{\overline{L'}}
\newcommand\Sec{Section }
\newcommand\Subsec{Subsection }
\begin{document}
\title[Weyl images of Kantor pairs]{Weyl images of Kantor pairs}
%***********
\author{Bruce Allison}
\address[Bruce Allison]{Department of Mathematical and Statistical Sciences \\ University
of Alberta \\ Edmonton, Alberta, Canada T6G 2G1 }
\email{ballison@ualberta.ca}
\author{John Faulkner}
\address[John Faulkner]
{Department of Mathematics\\
University of Virginia\\
Kerchof Hall, P.O.~Box 400137\\
Charlottesville VA 22904-4137 USA}
\email{jrf@virginia.edu}
\author{Oleg Smirnov}
\address[Oleg Smirnov]
{Department of Mathematics\\
College of Charleston\\
Charleston, SC, USA 29424-0001}
\email{smirnov@cofc.edu}
\subjclass[2010]{Primary 17B60, 17B70; Secondary  17C99, 17B65} \keywords{Kantor pairs, graded Lie algebras, Jordan pairs}

\let\thefootnote\relax\footnotetext{To be published in the Canadian Journal of Mathematics, DOI 10.4153/CJM-2016-047-1,  \textcopyright~2016 Canadian Mathematical Society}

%\date{July 8, 2015}
\begin{abstract} Kantor pairs arise naturally in the study of $5$-graded Lie algebras.  In this article, we introduce
and study Kantor pairs with short Peirce gradings and relate them to Lie algebras
graded by the root system of type $\BCtwo$.  This relationship allows us to define so called Weyl images
of short Peirce graded Kantor pairs.  We use Weyl images to construct new examples of Kantor pairs, including a  class of infinite  dimensional
central simple Kantor pairs over a field of characteristic $\ne 2$ or $3$, as well as a family of forms of a split
Kantor pair of type  $\type{E}{6}$.  \end{abstract}
\maketitle

Assume for simplicity  in this introduction that  $\bR$ is a ring of scalars containing
$\frac 16$ (although we will relax this
assumption in a few parts of the paper).  A \emph{Kantor pair} over $\bR$ is a pair $P =  (P^- , P^+)$ of $\bR$-modules
together with two trilinear
products $\{\ ,\ ,\ \}^\sg : P^\sg \times P^\msg \times P^\sg \to P^\sg$, $\sg = \pm$, satisfying
two $5$-linear identities (K1) and (K2)   (see \Subsec \ref{subsec:KPs} or \cite{AF1}).
These structures arise naturally in the study of \emph{$5$-graded Lie algebras}, by which we mean
$\bbZ$-graded Lie algebras of the form $L = L_{-2}\oplus L_{-1}\oplus L_0\oplus L_1\oplus L_2$.
Indeed, if $L$ is $5$-graded Lie algebra, then the pair $(L_{-1} , L_1)$ has the structure of a Kantor pair,
called \emph{the Kantor pair  enveloped by $L$}, where the two products are restrictions of the product
$[[x,y],z]$ on $L$.
Conversely, given a Kantor pair $P$ there exists a $5$-graded Lie algebra
(not in general unique) that  envelops $P$.
This relationship between
$5$-graded Lie algebras and Kantor pairs,
which we view schematically as
\begin{equation} \label{eq:detscheme} \text{$5$-graded Lie algebras} \ \ \rightsquigarrow \ \ \text{Kantor pairs},
\end{equation}
is an important tool in the study of each of these structures,
and it generalizes the well-known relationship between $3$-graded Lie algebras and  Jordan pairs
\cite[\S 1.5]{N2}.

To describe  some background, we note that Isai Kantor studied a class of triple systems, which we call
\emph{Kantor triple systems}, in his foundational paper \cite{K1}.  He developed the relationship of
Kantor triple systems with $5$-graded Lie algebras that possess grade-reversing  period 2 automorphisms.
He used this relationship to  obtain a classification of  finite dimensional nonpolarized
(see \Subsec \ref{subsec:trilinear}) simple Kantor
triple systems over an algebraically closed field of characteristic $0$.

Kantor triple systems constitute one of the largest classes of nonassociative objects for which such a
classification result has been obtained. The class includes  Jordan  triple systems  as well as triple
systems constructed from associative algebras, alternative algebras,   Jordan algebras and many other
interesting exceptional objects.

Given a Kantor triple system one can, as in the Jordan case, construct a Kantor pair by doubling
(see \Subsec \ref{subsec:trilinear}), but not every Kantor pair arises in this way.
So in this sense Kantor pairs are generalizations of Kantor triple systems. Moreover,
pairs are  more natural objects to consider from the viewpoint  of graded Lie algebras,
since $5$-graded Lie algebras need not possess grade-reversing  period 2 automorphisms
 (see Remark \ref{rem:KP}(iii)).

Kantor pairs also arise using signed doubling of some analogs of
Kantor triple systems
called $(1,1)$-Freudenthal-Kantor triple systems, including Freudenthal triple systems (with a suitably modified product). (See Example \ref{ex:symplectic}.)  Freudenthal triple systems
have been studied mathematically  by many authors and have appeared recently in several important physical models
(see \cite{G}, \cite{BDDRE}, \cite{MQSTZ}, and the references   therein).

In this paper, we  introduce  short Peirce  gradings (SP-gradings) of Kantor pairs.  We describe a relationship  between Lie algebras graded by the root system $\Delta$ of type $\BCtwo$  and SP-graded Kantor pairs.
Using this relationship we  define a Weyl image $\p{u}P$ of $P$ for each SP-graded Kantor pair $P$ and each element
$u$ of the Weyl group of $\Delta$.  We develop the properties
of Weyl images and use them to construct new examples of Kantor pairs.

Although we obtain our results and examples for the most part without assuming that $\bR$ is a field, this article
is the beginning of an investigation of
central simple Kantor pairs  over a field, so we have particular interest in that case.
Our results here will be used
in a paper \cite{AS} by the first and third authors  that contains a
structure theorem for central simple Kantor pairs over a field of characteristic  $\ne 2$, $3$ or $5$.  The
theorem asserts that
these pairs occur in four classes:  a class of Jordan pairs, the class of
(finite dimensional)
forms  of split Kantor pairs of exceptional type (see Subsection \ref{subsec:splitKP}), a new class of Kantor
pairs constructed from hermitian forms, and a new class that we introduce
in \Sec \ref{sec:skew}  using Weyl  images.

We conclude this introduction by briefly outlining the contents of the paper.
After some preliminaries on root graded Lie algebras  in \Sec \ref{sec:rootgraded} and trilinear pairs
in \Sec \ref{sec:trillinear}, we recall or prove some basic properties of Kantor pairs
in \Sec \ref{sec:KPs}. One such property is that  the relationship
\eqref{eq:detscheme} restricts to a \emph{one-to-one} correspondence between central simple
$5$-graded Lie algebras up to graded isomorphism and central simple Kantor pairs up to isomorphism.

In \Sec \ref{sec:SP}, we introduce  SP-graded Kantor pairs and
$\BCtwo$-graded Lie algebras.
An \emph{SP-grading}  of a Kantor pair $P$ is a $\bbZ$-grading of $P$ whose
support is contained in $\set{0,1}$; whereas a \emph{$\BCtwo$-graded Lie algebra}
is a Lie algebra graded by the root lattice of the root system $\Delta$ of type $\BCtwo$
with support contained in $\Delta\cup\set0$.  (The latter definition is convenient
for us but not standard.  See \Subsec \ref{subsec:rootgraded}.)
We  establish the   relationship mentioned above between
$\BCtwo$-graded Lie algebras and SP-graded Kantor pairs, and deduce some of its properties.
(It can be viewed as the rank two version of \eqref{eq:detscheme}, since
$5$-graded Lie algebras are precisely the same as $\BCone$-graded Lie algebras.)

In Section \ref{sec:Weyl}, we define and study Weyl images of SP-graded Kantor pairs.
To describe these briefly, let $P$ be an SP-graded Kantor pair and
let $u$ be an element of the Weyl group $W_\Delta$ of the root system $\Delta$ of type $\BCtwo$. Then $P$
is  enveloped by a
$\BCtwo$-graded Lie algebra $L$; and we  use $u$ to adjust the grading
of $L$ in an evident fashion to obtain a $\BCtwo$-graded Lie algebra
$\p{u}L$, which in turn  envelops a Kantor pair $\p{u}P$, called a \emph{Weyl image} of $P$.  This gives us a well-defined action of the group $W_\Delta$
on the class of SP-graded Kantor pairs, and one  sees that
Weyl images of central simple SP-graded Kantor pairs are central simple.
 A particularly interesting case
occurs when $u$ is the reflection corresponding to the short basic root,
in which case we denote $\p{u}P$
by $\bP$ and call it simply the \emph{reflection} of $P$.

The reader may  initially suspect that the reflection $\bP$ of an SP-graded Kantor pair $P$ is just $P$ with a different SP-grading.
However, it turns out that $P$ and $\bP$  are not in general isomorphic \emph{even as ungraded pairs}.  This suggests a strategy for
giving new constructions of Kantor pairs:   Start with a Kantor pair $P$, choose an appropriate SP-grading of $P$ and
form the reflection $\bP$ of $P$ with that grading.
In the last two sections we look in detail at two examples of this strategy where we obtain a pair $\bP$ with quite different
properties   than $P$.

First, in \Sec \ref{sec:skew}, we
start with a nondegenerate   bilinear form  $g : V^- \times V^+ \to \bR$.  Let $\tg$ be the symmetric
bilinear form on $\tV = V^- \oplus V^+$ that extends $g$ and is zero
on  $V^\sg \times V^\sg$, $\sg = \pm$; and let  $\ffo(\tg)$ be Lie algebra
spanned by endomorphisms of $\tV$ of the form $x \mapsto g(x,w)v - g(x,v)w$,
where $u,v\in V$.
If there exists  $e = (e^-,e^+)\in V^-\times V^+$ such that
$g(e^-,e^+) = 1$, then $\ffo(\tg)$ has a natural $\BCtwo$-grading,
so it  envelops an SP-graded Kantor pair $\FSk(g)$.
The pair $\FSk(g)$ is known to be Jordan \cite{LB}, but its reflection $\FSk(g)\brv$ is not in general.
Moreover, in the case when $\bR$ is a field and $\dim(V^\sg) \ge 3$, Kantor pairs of the form $\FSk(g)\brv$ make up the
fourth class of central simple Kantor pairs appearing in the structure theorem  mentioned  above.

In  \Sec \ref{sec:KPE6}, we use a non-singular bilinear form
$g : M^- \times M^+ \to \bR$, where each $M^\sg$ is a finitely generated projective
module over $\bR$ of rank $6$.
Following the approach in \cite{F}, we
use the exterior algebras $\Wg(V^\sg)$, $\sg = \pm$, to construct a form $\cE = \cE(M^-,M^+,g)$
of the split Lie algebra of type  $\Esix$.
In the case when $\bR = \bbC$, this is a basis free version, with full proofs, of the construction of the complex Lie algebra $\Esix$
given by \'Elie Cartan in his 1894 thesis \cite[\S V.18, pp.89--90]{C}.
If there exists  $e = (e^-,e^+)\in M^-\times M^+$ such that
$g(e^-,e^+) = 1$, then
$\cE$ has a natural $\BCtwo$-grading,  which is strikingly similar to the
grading of $\ffo(\tg)$ arising in \Sec \ref{sec:skew}.  The
SP-graded Kantor pair  enveloped by $\cE$ has the form
$\Wg_3 = (\Wg_3(V^-),\Wg_3(V^+))$ with an easily remembered basis
free product and a natural SP-grading, which we use to
construct the reflection $\Wgref$.  As an example, we see
that when $\bR$ is a
Dedekind domain, the set of isomorphism classes of Kantor pairs of the form
$\Wg_3$ (resp.~$\Wgref$) is  parameterized by the Picard group of  $\bR$.
Suppose finally that $\bR$ is a field, in which case $\Wg_3$ is central simple.
Although the pair $\Wg_3$ is not Jordan,
it is close to Jordan  in a sense that we make   precise in \Subsec \ref{subsec:obstruct}.
In contrast,  $\Wgref$  is not close to Jordan; and it turns out that it
is a Kantor pair of particular interest in the  theory of finite dimensional central
simple Kantor pairs (see Remark \ref{rem:Wg3refb}).
The pair $\Wgref$ is the double of a Kantor triple system $C_{55}^2$ that was constructed originally by Kantor
using tensors $a_{ijk}$ which are  skew-symmetric with respect to $i,j$,
where $i, j = 1,\dots, 5$, $k =1,2$ \cite[\S4]{K2}.
Our approach using reflection gives a simple new construction of this interesting Kantor pair.

\smallskip \noindent\textbf{Acknowledgements.}  The authors thank Erhard Neher and the referee who each made suggestions that significantly improved
this article.

\smallskip
\noindent\textbf{Assumptions and notation.} \emph{Throughout the rest of the article,  we  assume that $\bR$ is a unital commutative associative
ring of scalars}.
In much of the article, we will also assume that $\frac 16\in \bR$ (and clearly state this assumption).  We do this because Kantor pairs have not even been defined without the assumption that $\frac 16\in \bR$, and it is not yet clear  what the definition should be.  However, one  place where we do not assume $\frac 16 \in \bR$ is in Section \ref{sec:KPE6}, where we think the Lie algebra constructions are of independent interest without restriction on~$\bR$.

We shall require a  $\bR$-module to be unital;
i.e., $1x = x$.
Unless indicated otherwise,  by  a module (resp.~an algebra) we will mean a module (resp.~an algebra) over~$\bR$.
 If $V$ and $W$ are modules, we will often abbreviate $\Hom_\bR(V,W)$ and $\End_\bR(V)$
by $\Hom(V,W)$ and $\End(V)$
respectively.
 Then, as usual,  $\End(V)$ is an associative algebra under composition and a Lie algebra under the commutator product (and it will always be clear which is being considered).
If $V$ is a module, we use the notation
$V^* = \Hom(V,\bR)$ for the \emph{dual module} of $V$.
If $\bR$ is a field, we often abbreviate $\dim_\bR(V)$ by $\dim(V)$.

If $V$ and $W$ are modules and $g: V\times W \to \bR$ is a bilinear form,
we say that $g$ is \emph{non-degenerate} (resp.~\emph{non-singular}) if the maps
$v \rightarrow g(v,\  \ )$ from $V$ into $W^*$
and  $w \rightarrow g(\ \ ,w)$ from $W$ into $V^*$ are injective (resp.~bijective).

Recall  that a module $W$ is said to be \emph{flat} (resp.~faithfully flat)  if for a exact sequence $V'\rightarrow V\rightarrow V''$ of modules to be exact
it is necessary (resp.~necessary and sufficient) that the induced sequence
$W\otimes_{\bR}V'\rightarrow W\otimes_{\bR}V\rightarrow
W\otimes_{\bR}V''$ is exact \cite[I.2 and I.3]{B2}.

Let $\Kalg$ denote  the category of unital commutative associative $\bR$-algebras.
We say that $\bbF\in\Kalg$ is \emph{flat} (resp.~\emph{faithfully flat}) if $\bbF$ is a flat
(resp.~faithfully flat) $\bR$-module.  Note that if $\bR$ is a field,
then any $\bbF\in\Kalg$ is non-trivial and free and hence faithfully flat.

If $V$ is
a module and $\bbF \in \Kalg$,  we write  $V_\bbF := \bbF \otimes_\bR V$.
If $V$ is  a $\bR$-algebra, then $V_\bbF$ is naturally an $\bbF$-algebra.  If $\ph : V \to W$
is a homomorphism of modules, we denote the induced homomorphism of $\bbF$-modules by $\ph_\bbF : V_\bbF \to W_\bbF$.
If $g : V \times W\to \bR$ is a bilinear form, we
have a unique $\bbF$-bilinear form $g_\bbF : V_\bbF \times W_\bbF \to \bbF$, which
we say is \emph{induced} by $g$,  such that  $g_\bbF(1 \otimes x,1 \otimes y) = g(x,y)$ for $x\in V$, $y\in W$.

Finally, if $X = \bigoplus_{i} X_i$ is a direct sum of modules
 and $\bbF\in \Kalg$, then there is a canonical identification
of $(X_i)_\bbF$ as an $\bbF$-submodule of $X_\bbF$ so that
$X_\bbF = \bigoplus_{i} (X_i)_\bbF$.  In this way
a $G$-graded algebra $X = \bigoplus_{g\in G} X_g$ , where $G$ is an abelian group, yields a $G$-graded $\bbF$-algebra
$X_\bbF = \bigoplus_{g\in G} (X_g)_\bbF$.

\section{Root graded  Lie algebras}
\label{sec:rootgraded}

\subsection{Root systems}

In this
\label{subsec:rootsystems}
paper, a \emph{root system} will mean a finite root system $\De$ in a finite
dimensional real Euclidean space $E_\Delta$ as
described for example in  \cite[VI.3]{B3}.  We  use the notation $Q_\De := \spann_\bbZ(\Delta)$
for the \emph{root lattice} of $\Delta$.
The \emph{automorphism group of $\De$}, denoted by
$\Aut(\De)$, is the stabilizer of $\De$ in $\GL(E_\De)$.  Using the restriction map,
we often identify  $\Aut(\De)$ with the stabilizer of $\De$ in $\Aut(Q_\De)$.
The \emph{Weyl group}  of $\De$, which is a subgroup of $\Aut(\De)$, will be denoted by
$W_\De$.
A root system $\De$ is said to be \emph{reduced} if $\De\cap(2\De) = \emptyset$. Recall that for each
rank $n \ge 1$, there exists a unique irreducible non-reduced root system of rank $n$ up to
isomorphism   \cite[VI.1.4, Prop.~14]{B3}.  This root system is said to have \emph{type}~$\type{BC}n$.

\subsection{Root graded Lie algebras}
\label{subsec:rootgraded}
Let $\De$ be a root system.
A \emph{$\De$-grading} of a  Lie algebra $\rL$ is a $Q_\De$-grading of $\rL$
such that $\supp_{Q_{\Delta}}(\rL) \subseteq \Delta \cup \set0$,
where  $\supp_{Q_{\Delta}}(\rL)$ denotes the \emph{support} of $\rL$ in $Q_\De$.
In that case we call $\rL$ together with the $\De$-grading a
\emph{$\De$-graded Lie algebra}.
(We note that this definition is less restrictive
than the one used in \cite{ABG}, \cite{BS} and  several earlier papers,
since we do not assume the existence of a grading subalgebra.  Our usage is natural here---see in particular Section 3---and will not cause the reader any confusion.)  If
$\Delta$ is irreducible of type $\Xn$, we often say that $\rL$ is \emph{$\Xn$-graded}; and we often refer to a $Q_{\Delta}$-graded isomorphism of $\Xn$-graded Lie algebras as  an
\emph{$\Xn$-graded isomorphism}.

If we fix a base $\Gamma= \set{\al_1,\dots,\al_n}$ for $\De$, we can identify $Q_\De$
with $\bbZ^n$ using the $\bbZ$-basis $\Gamma$ for $Q_\De$.
With this identification, every $\De$-graded Lie algebra is
a $\bbZ^n$-graded Lie algebra  (but not conversely of course).

\subsection{Images of $\De$-graded Lie algebras under the left action of \protect $\Aut(\De)$ \protect}
\label{subsec:Weylactionroot}
Suppose that $\rL$ is  a $\De$-graded Lie algebra.  If $\theta\in \Aut(\De)$, we  let $\p{\theta}\rL$ be the
$\De$-graded Lie algebra  such that $\p{\theta}\rL = \rL$ as Lie algebras and
\begin{equation}
\label{eq:imagedef}
(\p{\theta}\rL)_\al = \rL_{\theta^{-1}\al}
\end{equation}
for $\al\in Q_\De$.  We
call $\p{\theta}\rL$ the \emph{$\theta$-image} of $\rL$, and if $\theta\in W_\De$
we call   $\p{\theta}\rL$
a \emph{Weyl image} of $\rL$.
Clearly
\begin{equation} \label{eq:actrootgraded}
\p1 \rL = \rL \andd \p{\theta_1}(\p{\theta_2}\rL) = \p{\theta_1\theta_2}\rL
\end{equation}
for $\theta_1,\theta_2\in \Aut(\De)$,
so we have a left action of $\Aut(\De)$ on the class of $\De$-graded  Lie algebras.

\section{Trilinear pairs}
\label{sec:trillinear}

\emph{Unless mentioned to the contrary we  will assume throughout Sections \ref{sec:trillinear}--\ref{sec:skew} that  $\bR$ contains $\frac 16$.}

\subsection{Terminology}
A \emph{trilinear pair} is a pair $P = (P^-,P^+)$ of
\label{subsec:trilinear} modules together with two trilinear
maps $\tprod^\sg : P^\sg \times P^\msg \times P^\sg \to P^\sg$, $\sg = \pm$,
which we call the products on $P$.
If needed, we will call $\tprod^\sg$ the \emph{$\sg$-product} on $P$.
We define the  \emph{D-operator}
$D^\sg(x^\sg,y^\msg)\in \End(P^\sg)$ for  $x^\sg\in P^\sg$, $y^\msg\in P^\msg$ by
\[D^\sg(x^\sg,y^\msg)z^\sg  = \{x^\sg,y^\msg,z^\sg\}^\sg;\]
and we define the  \emph{K-operator}
$K^\sg(x^\sg,z^\sg)\in \Hom(P^\msg,P^\sg)$ for $x^\sg,z^\sg\in P^\sg$
by
\[K^\sg(x^\sg,z^\sg)y^\msg = \{x^\sg,y^\msg,z^\sg\}^\sg-\{z^\sg,y^\msg,x^\sg\}^\sg.\]
When no confusion will arise, we usually write
$\tprod^\sg$, $D^\sg(x^\sg,y^\msg)$ and $K^\sg (x^\sg,z^\sg)$ simply as
$\tprod$, $D(x^\sg,y^\msg)$ and $K (x^\sg,z^\sg)$ respectively; and we sometimes
also omit superscripts in our notation for elements in~$P^\sg$.

A \emph{homomorphism} from a trilinear pair $P$ into a trilinear pair $P'$ is a pair $\omega = (\omega^-,\omega^+)$
of linear maps such that
$\omega^\sg\{x^\sg,y^\msg,z^\sg\} = \{\omega^\sg x^\sg,\omega^\msg y^\msg,\omega^\sg z^\sg\}$
for $x^\sg,z^\sg\in P^\sg$, $y^\msg \in P^\msg$, $\sg = \pm$.
A homomorphism $\omega$ is  called an \emph{isomorphism}
if each $\omega^\sg$ is bijective.

If $P$ is a trilinear pair and  $Q = (Q^-,Q^+)$, where
 $Q^\sg$ is a submodule of $P^\sg$ for $\sg = \pm$, then
 $Q$ is called a \emph{subpair}, \emph{ideal} or \emph{left ideal} of $P$  if
 $\{Q^\sg,Q^\msg,Q^\sg\} \subseteq Q^\sg$,
 $\{P^\sg,P^\msg,Q^\sg\}+ \{P^\sg,Q^\msg,P^\sg\}+ \{Q^\sg,P^\msg,P^\sg\}\subseteq Q^\sg$
 or ~$\{P^\sg,P^\msg,Q^\sg\}\subseteq Q^\sg$ respectively for $\sg = \pm$.
A trilinear pair $P$ is said to be \emph{simple} if $\{P^\sg,P^\msg,P^\sg\} \ne \{0\}$
for $\sg = +$ or $\sg = -$  and the only ideals of $P$ are $P$
and $\{0\}$.

There are evident notions of \emph{direct sum} and \emph{quotient}   for trilinear pairs.

If $\bR$ is a field we say that $P$ is \emph{finite dimensional}
if each $P^\sg$ is finite dimensional, and we call
$(\dim(P^-) ,\dim(P^+))$ the \emph{dimension}
 of $P$.
If $d = \dim(P^-) =\dim(P^+)$, we say that
$P$ has \emph{balanced dimension} $d$.

The   \emph{centroid} of a trilinear pair $P$ is the subalgebra
$\Centrd(P)$ of the  associative algebra $\End(P^-)\oplus \End(P^+)$
consisting of the pairs of
maps $(\omega^-, \omega^+)\in \End(P^-)\oplus \End(P^+)$ such that
$\omega^\sg(\{x^\sg,y^\msg,z^\sg\})=
\{\omega^\sg(x^\sg),y^\msg,z^\sg\}=
\{x^\sg,\omega^\msg(y^\msg),z^\sg\}=
\{x^\sg,y^\msg,\omega^\sg(z^\sg)\}$.
We say  $P$ is  \emph{central} if the homomorphism
$a \mapsto a (\id_{P^-},\id_{P^+})$ from $\bR$ into $\Centrd(P)$ is an isomorphism.
If $P$ is simple, then $\Centrd(P)$  is a field.

The  \emph{opposite} of a trilinear pair $P = (P^-,P^+)$
is the trilinear pair $P^\op = (P^+,P^-)$ whose $\sg$-product
is the $\msg$-product of $P$ for $\sg = \pm$.

If $\bbF \in \Kalg$ and $P$ is a trilinear  pair,
then the products on $P$ canonically induce $\bbF$-trilinear products  on
$P_\bbF := (P^-_\bbF,P^+_\bbF)$ so that
$P_\bbF$ is a trilinear pair over $\bbF$.

Suppose that $P$ is a trilinear pair, $G$ is an abelian group  (written additively), and  $P^\sg =
\bigoplus_{g\in G}P_g^\sg$ for $\sg = \pm$, where
$P_g^\sg$ is a submodule of $P^\sg$ for $g\in G$, $\sg = \pm$.
We  say that $P =  (\bigoplus_{g\in G}P_g^-, \bigoplus_{g\in G}P_g^+)$
is a  $G$-\emph{grading} of $P$ if
\[\{P_g^\sg,P_k^\msg,P_\ell^\sg\} \subseteq P_{g-k+\ell}^\sg\]
for $g,k,\ell\in G$, $\sg = \pm$.
(Here we follow the terminology in  \cite[\S8.1]{LN}.  This notion of grading
is equivalent to the usual one
if we replace
$P_g^\sg $ by $P_{\sg g}^\sg$ for each $\sg$ and $g$.)    We often then write
\begin{equation} \label{eq:SP} \textstyle
P =  \bigoplus_{g\in G}P_g,
\end{equation}
where $P_g := (P_g^-,P_g^+)$ for $g\in G$.
Note that each $P_g$ is a subpair of $P$; however the sum \eqref{eq:SP}
is not in general a direct sum of trilinear pairs, since the
subpairs $P_g$ need not be ideals.  The $G$-\emph{support}
of $P$ is defined to be $\supp_G(P) = \set{g\in G \suchthat P_g^\sg \ne 0 \text{ for } \sg = + \text{ or } \sg = -}$.

Finally  suppose that $X$ is a \emph{triple system},  by which we mean a module with a trilinear
product $\{\ ,\ ,\ \} : X \times X \times X \to X$.  A \emph{polarization} of $X$ is a module decomposition
$X = X^-\oplus X^+$  such that
$\{X^\sg,X^\msg,X^\sg\} \subseteq X^\sg$,  $\{X^\sg,X^\sg,X\} = 0$ and
$\{X,X^\sg,X^\sg\} = 0$ for $\sg = \pm$; and we say that
$X$ is \emph{non-polarized} if it has no polarizations.

If $X$ is a triple system, the trilinear pair $(X,X)$ with products  defined by
$\{x,y,z\}^\sg = \{x,y,z\}$ (resp.~$\{x,y,z\}^\sg = \sg \{x,y,z\}$) is called the \emph{double} (resp.~the \emph{signed double}) of $X$.
It is easy to check (and well known) that the double (resp.~the signed double) of $X$ is simple if and only if $X$ is simple and   non-polarized.

\begin{remark}
In the rest of the paper, we will  often discuss simplicity and isomorphism
of graded algebras and graded  trilinear pairs.  To be clear,
\emph{the terms simple and isomorphism will be used in the
ungraded sense} as defined above, unless we specify to the contrary.
\end{remark}

\section{Kantor pairs and  $5$-graded ($\BCone$-graded) Lie algebras}
\label{sec:KPs}

\emph{Throughout this section, we assume that}
\[\De = \set{-2\al_1,-\al_1,\al_1,2\al_1}\]
\emph{is the irreducible root system of type $\BCone$ with base  $\Gamma = \set{\al_1}$}.
We identify $Q_\De = \bbZ$ using the $\bbZ$-basis $\Gamma$ for $Q_\De$ (as in \Subsec \ref{subsec:rootgraded}).
Then a $\BCone$-grading of a Lie algebra $L$ is merely
 a \emph{$5$-grading} of $L$.
(Recall  that if $m\ge 1$, a \emph{$2m+1$-grading of $L$}
is a $\bbZ$-grading $L = \bigoplus_{i\in\bbZ} L_i$ with $L_i = 0$ for $\mid i \mid > m$.)

In this section, we  recall the definition of a Kantor pair and how Kantor pairs are
related to $5$-graded Lie algebras.

\subsection{Kantor pairs}
\label{subsec:KPs}
A \emph{Kantor pair} is a trilinear pair $P$ such that the following identities  hold
\begin{gather}
\tag{K1}
[D(x^\sg,y^\msg\!),D(z^\sg,w^\msg\!)]=
D(D(x^\sg,y^\msg\!)z^\sg,w^\msg\!)-
D(z^\sg,D(y^\msg,x^\sg)w^\msg\!),\\
\tag{K2}
K(x^\sg,z^\sg)D(w^\msg,u^\sg)+ D(u^\sg,w^\msg)K(x^\sg,z^\sg) =
K(K(x^\sg,z^\sg)w^\msg,u^\sg)
\end{gather}
for $x^\sg,z^\sg,u^\sg  \in P^\sg$, $y^\msg,w^\msg \in P^\msg$, $\sg = \pm$.

It is clear that the  opposite  of a Kantor pair  is a Kantor pair.

\begin{specialcases}\label{ex:speccase}
(i) A  \emph{Jordan pair} $P$ is a Kantor pair satisfying $K(P^\sg,P^\sg) = 0$
for $\sg = \pm$.
The structure theory of Jordan pairs
is developed in detail in \cite{L},
where Jordan pairs are defined in a different way  using quadratic operators. (However, since
$\frac 1 6\in \bR$, the two definitions are equivalent \cite[Prop.~2.2]{L}.)

(ii)   Suppose  that $X$ is a triple system. Then $X$ is called
a \emph{Kantor triple system} if its double is a Kantor pair.
Kantor triple systems were introduced by Kantor in \cite{K1,K2}, where they were called generalized
Jordan triple systems of the second  order, and where numerous examples can be found.
In the literature,  Kantor triple systems are  also often called
\emph{$(-1,1)$-Freudenthal--Kantor triple systems}.   (See  \cite{YO}, as well as
the recent papers \cite{EO,EKO} and their references, for information about
\emph{$(\epsilon,\delta)$-Freudenthal--Kantor triple systems}, where
$\epsilon, \delta = \pm 1$.)

(iii)
Analogously,    a triple system $X$ with product $\{x,y,z\}$ is called a
$(1,1)$-\emph{Freudenthal-Kantor triple
system} if  its signed double is a Kantor pair.

(iv)  A
 \emph{structurable algebra} is a unital algebra with involution $(A,-)$ such that
$A$ is a Kantor triple system under the product
$\{x,y,z\} = 2((x\bar y) z + (z\bar y)x - (z\bar x)y)$ on $A$.
(Involution here means a period 2 anti-automorphism.  Also, the $2$ in the expression for $\tprod$ is unimportant;  it is included for
compatibility with the Jordan algebra case.)
The double $(A,A)$ of this Kantor triple system is
also called the \emph{double} of the
structurable algebra $A$.
See \cite{A1} and \cite{Sm} for  many examples of structurable algebras, including all
unital Jordan algebras and  all unital alternative algebras with  involution.
\end{specialcases}

\subsection{The Kantor pair enveloped by a 5-graded Lie algebra}
\label{subsec:5grading}
If
$\rL = \bigoplus_{i\in \bbZ} \rL_i$
is a  $5$-graded Lie algebra, then $P = (\rL_{-1},\rL_1)$ is a Kantor pair with products defined by
\begin{equation*}
\label{eq:KPproduct}
\{x^\sg,y^\msg,z^\sg\} = [[x^\sg,y^\msg],z^\sg]
\end{equation*}
for $x^\sg,z^\sg\in \rL_{\sg 1}$, $y^\msg\in \rL_{\msg 1}$, $\sg = \pm$.  (See \cite[Thm.~7]{AF1}, where
$-[[x^\sg,y^\msg],z^\sg]$ is used instead of $[[x^\sg,y^\msg],z^\sg]$.)
We call  $P$ the \emph{Kantor pair  enveloped by the $5$-graded Lie algebra
$\rL$}, and we say that \emph{the $5$-graded Lie algebra $L$  envelops $P$}.
(In a similar situation, Jacobson uses  the term enveloping Lie algebra \cite[\S3.1]{J1}.)

If  $L$ is $3$-graded, we see using the Jacobi identity that the pair $(\rL_{-1},\rL_1)$ enveloped by $L$  is in fact Jordan.

If  $P$ is the Kantor pair  enveloped by a $5$-graded Lie algebra $L$, we let
\[\TLP := P^- \oplus P^+ = L_{-1} \oplus L_1 \quad \text{in $L$.}\]

The following  Lemma (as well as Lemma \ref{lem:TLP} below) is easily checked.

\begin{lemma} \label{lem:LTS}
Suppose that $P$ is the Kantor pair  enveloped by a $5$-graded Lie algebra $L$.
Then $\TLP$ is a triple system under the trilinear product
$[[x,y],z]$.  Moreover,  this triple system depends only on $P$;
specifically, if $x^\tau,y^\tau,z^\tau \in P^\tau$ for $\tau = \pm$, we have
\begin{equation}
\label{eq:LTS}
\begin{aligned}
{}[[ x^\sg,y^\sg],z^\sg] &= 0,
&[[x^\sg,y^\msg],z^\sg] &= \{x^\sg,y^\msg,z^\sg\},\\
[[x^\msg,y^\sg],z^\sg] &= -\{y^\sg,x^\msg,z^\sg\},
&[[x^\sg,y^\sg],z^\msg] &= K(x^\sg,y^\sg)z^\msg.
\end{aligned}
\end{equation}
\end{lemma}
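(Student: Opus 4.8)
The plan is to reduce everything to homogeneous elements and to play the $5$-grading of $L$ off against anticommutativity and the Jacobi identity. Since all the products in sight are trilinear, it suffices to treat $x,y,z\in\TLP$ that are homogeneous, each of degree $+1$ or $-1$. First I would settle that $\TLP$ is closed under the product $[[x,y],z]$: for such homogeneous elements $[[x,y],z]$ is homogeneous of degree $\deg(x)+\deg(y)+\deg(z)\in\set{-3,-1,1,3}$, and since $L$ is $5$-graded we have $L_{\pm 3}=0$, so the result lands in $L_{-1}\oplus L_1=\TLP$. This makes $\TLP$ a triple system and, as a bonus, immediately gives the first identity in \eqref{eq:LTS}, since $[[x^\sg,y^\sg],z^\sg]$ has degree $\pm 3$ and hence vanishes.

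For the remaining identities I would note that the four displayed formulas, each read for $\sg=+$ and $\sg=-$, exhaust all eight sign patterns of $(\deg x,\deg y,\deg z)$. The second identity $[[x^\sg,y^\msg],z^\sg]=\{x^\sg,y^\msg,z^\sg\}$ is simply the definition of the product enveloped by $L$. The third follows by anticommutativity, $[[x^\msg,y^\sg],z^\sg]=-[[y^\sg,x^\msg],z^\sg]=-\{y^\sg,x^\msg,z^\sg\}$, using the second. The fourth identity is the only one that requires genuine manipulation, and I expect the (modest) crux of the argument to lie in this sign-bookkeeping. Applying the Jacobi identity gives
\[
[[x^\sg,y^\sg],z^\msg]=[x^\sg,[y^\sg,z^\msg]]-[y^\sg,[x^\sg,z^\msg]],
\]
and rewriting each term by anticommutativity, e.g. $[x^\sg,[y^\sg,z^\msg]]=-[[y^\sg,z^\msg],x^\sg]=-\{y^\sg,z^\msg,x^\sg\}$, collapses the right-hand side to $\{x^\sg,z^\msg,y^\sg\}-\{y^\sg,z^\msg,x^\sg\}$, which is exactly $K(x^\sg,y^\sg)z^\msg$ by the definition of the $K$-operator.

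Finally, to see that the triple system depends only on $P$, I would observe that the eight homogeneous values of $[[x,y],z]$ are, by \eqref{eq:LTS}, expressed entirely through the intrinsic products $\tprod$ and the $K$-operators of $P$; hence by trilinearity the full triple product on $\TLP$ is determined by $P$ alone, with no reference to the particular enveloping Lie algebra $L$. The main obstacle, such as it is, is purely the sign calculus in the fourth identity; the grading does all of the conceptual work, both for closure and for killing the degree $\pm 3$ terms.
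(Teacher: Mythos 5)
Your proof is correct and follows essentially the same route as the paper, whose entire proof is the one-line observation that closure follows from the grading and that \eqref{eq:LTS} is easily checked; your write-up simply supplies those checks (the degree argument for closure and the vanishing of degree $\pm 3$ terms, anticommutativity for the third identity, and the Jacobi identity plus the definition of $K$ for the fourth), all of which are accurate.
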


\begin{remark} \label{rem:LTS}
(i) Despite  the conclusion in Lemma \ref{lem:LTS}, we include
the subscript $L$ in the notation for $\TLP$ to emphasize that we are regarding $\TLP$ as a submodule of $L$.

(ii) The triple system $\TLP$ is a Lie triple system.
Moreover, this triple system is sign-graded, which means that it is $\bbZ$-graded with
support contained in $\{-1,1\}$.  This is the point of view
taken in \cite[\S3--4]{AF1} (and in special cases elsewhere), but for
simplicity we won't make use of Lie triple systems in this work.
\end{remark}

\begin{lemma} \label{lem:TLP}
Suppose that  $P$ is the Kantor pair  enveloped by a $5$-graded Lie algebra $L$.
Then the subalgebra $\langle \TLP \rangle_\text{alg}$ of $L$ that is generated
by $T_L(P)$ is a $5$-graded ideal of $L$ that  envelops $P$. Moreover
$\langle \TLP \rangle_\text{alg} = [\TLP,\TLP] \oplus \TLP$
and
$[\TLP,\TLP] = [L_{-1},L_{-1}]  \oplus [L_{-1},L_1] \oplus [L_{-1},L_1]$.
\end{lemma}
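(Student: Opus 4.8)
The plan is to identify the generated subalgebra explicitly as $N := \TLP \oplus [\TLP,\TLP]$ and then read off every assertion from the $5$-grading together with the Jacobi identity. First I would record, purely from $\TLP = L_{-1}\oplus L_1$ and the grading rule $[L_i,L_j]\subseteq L_{i+j}$ (with $L_k = 0$ for $\card{k} > 2$), that $[\TLP,\TLP]$ lands in $L_{-2}\oplus L_0\oplus L_2$, namely $[\TLP,\TLP] = [L_{-1},L_{-1}] + [L_{-1},L_1] + [L_1,L_1]$. Since these three summands sit in the distinct degrees $-2,0,2$, while $\TLP$ sits in degrees $\pm 1$, the sums $[\TLP,\TLP] = [L_{-1},L_{-1}]\oplus[L_{-1},L_1]\oplus[L_1,L_1]$ and $N = [\TLP,\TLP]\oplus\TLP$ are automatically direct, and $N$ is a $\bbZ$-graded submodule supported in $\set{-2,\dots,2}$.

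Next I would show that $N$ is a subalgebra, which gives $N = \langle\TLP\rangle_\text{alg}$: the inclusion $\supseteq$ is clear since $\langle\TLP\rangle_\text{alg}$ contains both $\TLP$ and $[\TLP,\TLP]$, and the reverse follows once $N$ is closed under the bracket. The degree bound does most of the work. Brackets of two elements of $\TLP$ lie in $[\TLP,\TLP]$ by definition; and $[[\TLP,\TLP],\TLP]\subseteq\TLP$ because the only sums $i+j$ with $i\in\set{-2,0,2}$, $j\in\set{-1,1}$ lying in $\set{-2,\dots,2}$ are $\pm 1$, and $L_{\pm 1}\subseteq\TLP$. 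The one genuinely non-formal point is $[[\TLP,\TLP],[\TLP,\TLP]]\subseteq[\TLP,\TLP]$; here I would take a generator $[a,b]$ of $[\TLP,\TLP]$ with $a,b\in\TLP$ and apply the Jacobi identity $[c,[a,b]] = [[c,a],b] - [[c,b],a]$ for $c\in[\TLP,\TLP]$, using the already-established $[[\TLP,\TLP],\TLP]\subseteq\TLP$ to land each of the two terms back in $[\TLP,\TLP]$.

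With $N = \langle\TLP\rangle_\text{alg}$ in hand, the remaining claims are quick. Restricting the grading of $L$ makes $N$ a $5$-graded Lie algebra with $N_{\pm 1} = L_{\pm 1}$, so the Kantor pair it envelops is $(N_{-1},N_1) = (L_{-1},L_1) = P$ with the same products $[[x,y],z]$; hence $N$ envelops $P$. To see $N$ is an ideal it suffices, since $N$ is generated as an algebra by $\TLP$ and each $\ad x$ is a derivation, to verify $[L,\TLP]\subseteq N$: a short induction on bracket length via $[x,[u,v]] = [[x,u],v] + [u,[x,v]]$ then propagates this to $[L,N]\subseteq N$. The containment $[L,\TLP]\subseteq N$ is immediate case by case from the grading --- for instance $[L_{\pm 2},L_{\mp 1}]\subseteq L_{\pm 1}\subseteq N$, $[L_{\pm 2},L_{\pm 1}]\subseteq L_{\pm 3} = 0$, $[L_0,L_{\pm 1}]\subseteq L_{\pm 1}\subseteq N$, while $[L_{\pm 1},L_{\pm 1}]$ and $[L_{\pm 1},L_{\mp 1}]$ lie in $[\TLP,\TLP]$ by definition.

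I expect the main obstacle to be organizing the closure argument so as to isolate the single Jacobi-identity step $[[\TLP,\TLP],[\TLP,\TLP]]\subseteq[\TLP,\TLP]$; everything else is bookkeeping with the $5$-grading. The reduction \emph{``the ideal generated by $\TLP$ needs only $[L,\TLP]\subseteq N$''} is what keeps the ideal verification from becoming a large case analysis, so I would state and use it explicitly rather than checking $[L_k,N_j]$ for all pairs $(k,j)$.
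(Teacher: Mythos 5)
Your proposal is correct and follows essentially the same route as the paper: identify $\langle \TLP \rangle_\text{alg}$ explicitly as $[\TLP,\TLP]\oplus\TLP$, get directness of all the sums from the $5$-grading, and verify closure and the ideal property by degree bookkeeping plus the Jacobi identity. The paper compresses these verifications into ``it is clear that $[T,T]\cap T = 0$ and $[T,T]\oplus T$ is an ideal of $L$,'' and your write-up simply supplies the details (the step $[[\TLP,\TLP],[\TLP,\TLP]]\subseteq[\TLP,\TLP]$ and the reduction of the ideal check to $[L,\TLP]\subseteq N$) that the authors leave to the reader.
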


\begin{definition} \label{def:tight}  Suppose  that $L$ is a $5$-graded Lie algebra and $P$ is a Kantor pair.  We say that
$L$ \emph{tightly  envelops} $P$ if $L$  envelops $P$,
\begin{equation}\label{eq:tight}
\langle \TLP \rangle_\text{alg} = L \andd Z(L) \cap [\TLP,\TLP] = 0,
\end{equation}
where (here and subsequently) $Z(L)$ denotes the \emph{centre} of the Lie algebra $L$.
\end{definition}

\begin{remark} \label{rem:tight} If $L$ is a $5$-graded Lie algebra that  envelops a Kantor pair $P$,  it follows easily from
Lemma \ref{lem:TLP} that we can replace
$L$ by $L' = \langle \TLP \rangle_\text{alg}$ and then replace $L'$ by
$\overline{L'} = L'/\left(Z(L')\cap [T_{L'}(P),T_{L'}(P)]\right)$  to get a $5$-graded Lie algebra $\overline{L'}$
that tightly  envelops $P$ (with the evident identifications of $P^-$ and $P^+$ in  $\overline{L'}$).
\end{remark}

\begin{remark}\label{rem:WeylBC1}
Suppose that $P$ is the Kantor pair  enveloped by a $5$-graded Lie algebra $L$.
Since  $\Aut(\De) = W_\De = \set{1,-1}$,
we can form the Weyl images
$\p1\rL$ and $\p{-1}\rL$ of~$\rL$.  Clearly the $5$-graded Lie
algebra $\p1\rL = \rL$  envelops $P$, whereas
the $5$-graded Lie algebra $\p{-1}\rL$  envelops the Kantor pair~$P^\op$,
which in general in not isomorphic to $P$.
In \Sec \ref{sec:Weyl}, we will look at this phenomenon for $\BCtwo$-graded Lie algebras, where
the supply of Weyl images is richer.
\end{remark}

\subsection{The Kantor construction}
\label{subsec:KanLA}
To see that any Kantor pair is  enveloped by a  $5$-graded Lie algebra,  we now recall from
\cite[\S3--4]{AF1} the
construction of a $5$-graded Lie algebra $\Kan(P)$ from a Kantor pair  $P$.

Let $P$ be a Kantor pair.  Let $\smat{P^-\\ P^+}$ be the module of
column vectors with entries as indicated, and canonically identify
$\End \smat{P^-\\ P^+}=\smat{\End(P^-)&\Hom(P^+,P^-)\\
\Hom(P^-,P^+)&\End(P^+)}$
so that the action of  $\End \smat{P^-\\ P^+}$ on $\smat{P^-\\ P^+}$ is by matrix multiplication.
Then
\[
\fS(P) := \spann_\bR\{\mat{D(x^-,x^+)&K(y^-,z^-)\\ K(y^+,z^+)&-D(x^+,x^-)}
: x^\sg,y^\sg,z^\sg\in P^\sg,\ \sg = \pm 1
\}
\]
is a subalgebra of the Lie algebra $\End\! \smat{P^-\\ P^+}$ under the commutator product.
Also
\[\Kan(P) := \fS(P) \oplus \mat{P^-\\ P^+},\]
is a Lie algebra under the anti-commutative product $[\, ,\,]$ satisfying:
\begin{gather*}
[A,B] = AB-BA, \quad
[A,\mat{x^-\\x^+}]=A\mat{x^-\\x^+},\\
[\mat{x^-\\x^+},
\mat{y^-\\y^+}]=
\mat{D(x^-,y^+)-D(y^-,x^+)&K(x^-,y^-)\\
K(x^+,y^+)&-D(y^+,x^-)+D(x^+,y^-)}
\end{gather*}
for $A,B \in \fS(P)$, $x^\sg,y^\sg\in P^\sg$, $\sg = \pm$.
We call  $\Kan(P)$ the \emph{Kantor Lie algebra} of  $P$.

The Lie algebra $\Kan(P)$ is $5$-graded with
\begin{equation}
\label{eq:gr5}
\begin{gathered}
\Kan(P)_{-2} =
\mat{0& K(P^-,P^-)\\
0&0},\quad
\Kan(P)_{-1}=  \mat{P^-\\0},\quad\\
\Kan(P)_0 =
\spann_\bR\{\mat{ D(x^-,x^+)&0\\0&-D(x^+,x^-)} : x^-\in P^-,\ x^+\in P^+\},\\
\Kan(P)_1 = \mat{0\\ P^+},\quad
\Kan(P)_2 = \mat{ 0& 0\\ K(P^+ ,P^+)&0}.
\end{gathered}
\end{equation}
We call this $5$-grading the \emph{standard $5$-grading}
of $\Kan(P)$.  \emph{Unless mentioned otherwise we will regard $\Kan(P)$ as a $5$-graded
algebra with its standard $5$-grading.}

If $P$ is a Kantor pair, \emph{we identify $P^-$ with $\Kan(P)_{-1}= \smat{P^-\\0}$
and $P^+$ with $\Kan(P)_1= \smat{0\\P^+}$}  in the evident
fashion. With this identification the following is clear:

\begin{proposition}
\label{prop:Kan5} If $P$ is a Kantor pair, then $\Kan(P)$ with its standard $5$-grading tightly  envelops $P$.
\end{proposition}

We will   see in Corollary \ref{cor:Kanchar} that
$\Kan(P)$ is the unique $5$-graded Lie algebra that tightly envelops the Kantor pair $P$.

\begin{remark} \label{rem:KP} Suppose that $P$ is a Kantor pair.

(i) $P$ is Jordan if and only if $\Kan(P)_{-2} = \Kan(P)_2 = 0$,
in which case  the $3$-graded ($=\type A1$-graded) Lie algebra $\Kan(P) = \Kan(P)_{-1}\oplus \Kan(P)_0 \oplus \Kan(P)_1$
is (graded-isomorphic to) the derived algebra of the \emph{Tits-Kantor-Koecher Lie algebra} of $P$
\cite[\S9.1]{LN}.

(ii) $P$ is finitely spanned  (as a module) if and only if $\Kan(P)$ has the same property.

(iii) Suppose  $P = (X,X)$ is the double of a Kantor triple system $X$.  Then
$\Kan(P)$ is the Lie algebra constructed by Kantor from $X$ in \cite{K1,K2},
and it is easy to check that there is a unique grade-reversing  period 2 automorphism
of $\Kan(P)$ which
maps $\smat{x\\0}$ to    $\smat{0\\x}$ for $x\in X$.
\end{remark}

\begin{lemma}  \label{lem:extendKan}  Let $\bbF \in \Kalg$ and let $P$ be a Kantor pair.   Assume
that either $\bbF$ is a projective $\bR$-module (which holds for example if $\bR$ is a field);
or that $\bbF$ is flat   and
each $P^\sg$ is a finitely generated  module.  Then there is a canonical  $5$-graded $\bbF$-algebra isomorphism from
$\Kan(P)_\bbF$ onto   $\Kan(P_\bbF)$.
\end{lemma}

\begin{proof}  Now $\Kan(P) = \fS(P) \oplus  \smat{P^-\\ P^+}$,
so $\Kan(P)_\bbF = \fS(P)_\bbF \oplus \smat{P_\bbF^-\\ P_\bbF^+}$,
whereas $\Kan(P_\bbF) = \fS(P_\bbF) \oplus \smat{P_\bbF^-\\ P_\bbF^+}$.
Our isomorphism $\omega : \Kan(P)_\bbF \to \Kan(P_\bbF)$, is the direct sum
$\omega'\oplus \omega''$, where $\omega''$ is the identify map and
$\omega'$ is the composition
\begin{equation}
\label{eq:extendKan}
\fS(P)_\bbF \to \End(\mat{P^-\\ P^+})_\bbF \to \End_\bbF (\mat{P_\bbF^-\\ P_\bbF^+}).
\end{equation}
Here the first map in \eqref{eq:extendKan} is induced by inclusion and is injective
since $\bbF$ is flat; whereas the second map in \eqref{eq:extendKan}
is the canonical homomorphism, which is injective because of our assumptions
on $\bbF$ and $P$ (see \cite[II.5.3, Prop.~7]{B1} and
\cite[I.2.10, Prop.~11]{B2}).  It is easy to check that the image of $\omega'$
is in fact $\fS(P_\bbF)$, and that $\omega$ is a graded $\bbF$-algebra homomorphism.
\end{proof}

\subsection{Simplicity and centrality} \label{subsec:simcent}

The following proposition is proved in  \cite[Prop.~2.7(iii)]{GLN}.

\label{subsec:simplicity}
\begin{proposition}  \label{prop:simpleKan} If $P$ is a Kantor pair, then
$P$ is simple if and only if $\Kan(P)$ is  simple.
\end{proposition}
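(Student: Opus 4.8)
The plan is to prove the equivalence "$P$ is simple $\iff \Kan(P)$ is simple" by transferring ideals back and forth between the Kantor pair $P$ and its Kantor Lie algebra $\Kan(P)$. The key structural facts I would lean on are that $\Kan(P)$ tightly envelops $P$ (Proposition \ref{prop:Kan5}), that $\Kan(P)$ is generated as a Lie algebra by $T = \TLP = \Kan(P)_{-1}\oplus\Kan(P)_1$ (so $\langle T\rangle_{\text{alg}} = \Kan(P)$), and the explicit description of the grading \eqref{eq:gr5} together with the bracket formulas from the Kantor construction. The crux is to build a correspondence sending ideals of $P$ to graded ideals of $\Kan(P)$ and conversely, using the translation formulas \eqref{eq:LTS} that express the Kantor-pair products $\{x^\sg,y^\msg,z^\sg\}$ and the operators $D$, $K$ in terms of Lie brackets.

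For the reverse direction, I would suppose $I$ is a nonzero ideal of $\Kan(P)$ and show it forces a nonzero ideal of $P$. First, since the grading is by $Q_\De=\bbZ$ and ideals can be replaced by their homogeneous parts in a graded-simplicity argument, I would argue that $I\cap\Kan(P)_{-1}$ and $I\cap\Kan(P)_1$ determine a subpair $J = (I\cap\Kan(P)_{-1}, I\cap\Kan(P)_1)$ of $P$, and that $J$ is in fact an ideal of the trilinear pair $P$: the conditions $\{P^\sg,P^\msg,J^\sg\}, \{P^\sg,J^\msg,P^\sg\}, \{J^\sg,P^\msg,P^\sg\}\subseteq J^\sg$ all follow by reading \eqref{eq:LTS} and using that $I$ is a Lie ideal. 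The harder point is to guarantee that $J$ is nonzero whenever $I$ is: because $\Kan(P)$ is generated by $T$ and $\Centre(\Kan(P))\cap[T,T]=0$ from tightness, a nonzero ideal must meet the degree $\pm1$ pieces nontrivially; I would make this precise by a standard bracketing argument, bracketing elements of $I$ repeatedly against $T$ to descend into degrees $\pm1$, and invoking tightness to rule out the degenerate case where $I$ sits entirely in the $0,\pm2$ part meeting the centre.

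For the forward direction, I would assume $P$ is simple and suppose $I$ is a nonzero graded ideal of $\Kan(P)$; reducing to the homogeneous/graded case again, set $J^\sg = I\cap\Kan(P)_{\sg1}$. By the ideal-transfer just described $J=(J^-,J^+)$ is an ideal of $P$, so by simplicity of $P$ either $J=0$ or $J=P$. If $J=P$ then $I\supseteq T$, hence $I\supseteq\langle T\rangle_{\text{alg}}=\Kan(P)$ and $I=\Kan(P)$. If $J=0$, I would use tightness again, together with the bracket formulas placing $\Kan(P)_{\pm2}=K(P^\sg,P^\sg)$ and $\Kan(P)_0$ inside $[T,T]$, to show $I$ would have to lie in $\Centre(\Kan(P))\cap[T,T]=0$, contradicting $I\neq0$. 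Finally I would note that the restriction to \emph{graded} ideals is harmless: the standard argument that, for a graded algebra, a minimal nonzero ideal can be taken homogeneous, or equivalently that simplicity of the ungraded algebra is detected on graded ideals under these generation and centre conditions, closes the gap.

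The step I expect to be the main obstacle is establishing \emph{nonvanishing} of the transferred ideal $J$ from a nonzero ideal $I$ of $\Kan(P)$ — i.e.\ showing that $I$ genuinely meets $\Kan(P)_{-1}\oplus\Kan(P)_1$. This is precisely where the tightness hypotheses $\langle\TLP\rangle_{\text{alg}}=\Kan(P)$ and $\Centre(\Kan(P))\cap[\TLP,\TLP]=0$ from Definition \ref{def:tight} do the real work, preventing an ideal from hiding in the even or central part of the grading, and I would allocate most of the care there; the ideal-transfer computations via \eqref{eq:LTS} are routine by comparison.
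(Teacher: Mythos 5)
First, a point of comparison: the paper does not prove this proposition at all --- it simply quotes it from \cite[Prop.~2.7(iii)]{GLN} --- so your attempt has to stand entirely on its own, and it has two genuine gaps. The first is logical: you never prove the implication ``$\Kan(P)$ simple $\Rightarrow$ $P$ simple''. Both of your transfer arguments (the ``reverse'' and the ``forward'' paragraphs) start from an ideal $I$ of $\Kan(P)$ and produce an ideal of $P$; transfers in that direction can only serve the implication ``$P$ simple $\Rightarrow$ $\Kan(P)$ simple''. For the converse you must go the opposite way: starting from a nonzero proper ideal $J=(J^-,J^+)$ of $P$, manufacture a nonzero \emph{proper} ideal of $\Kan(P)$ --- for instance the ideal generated by $J^-\oplus J^+$, checking via \eqref{eq:LTS} and the ideal conditions on $J$ that its components in degrees $\pm 1$ remain inside $J^{\pm}$, so that it cannot be all of $\Kan(P)$. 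Your opening paragraph announces such a two-way correspondence, but this direction is never constructed or used anywhere. You would also need to handle the nondegeneracy clause in the definition of simplicity of a trilinear pair: if $\{P^\sg,P^\msg,P^\sg\}=0$ for both $\sg$, then $\fS(P)=0$ and $\Kan(P)$ is abelian, hence not simple.

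The second gap is the step you yourself flag as the main obstacle, and your proposed fix fails. The claim that tightness forces a nonzero ideal $I$ to satisfy $(I\cap\Kan(P)_{-1},\,I\cap\Kan(P)_{1})\ne 0$ is false: take $P=(\bbF,\bbF)$ with zero products; then $\Kan(P)=P^-\oplus P^+$ is abelian and tightly envelops $P$ (both conditions in \eqref{eq:tight} hold trivially), yet the diagonal $\set{x\oplus x \suchthat x\in\bbF}$ is a nonzero ideal meeting neither homogeneous component, and bracketing it against $T=\Kan(P)_{-1}\oplus\Kan(P)_{1}$ gives $0$, so ``repeated bracketing to descend into degrees $\pm1$'' produces nothing. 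The underlying problem is that a mixed element of $I$ cannot be split into homogeneous parts while staying inside $I$; intersections are the wrong object. What does work is the pair of componentwise \emph{projections} $(\pi_{-1}(I),\pi_{1}(I))$ onto $\Kan(P)_{\mp 1}$: since bracketing by a homogeneous element shifts all degrees uniformly, this pair is an ideal of $P$, and if it vanishes then $[I,T]=0$, whence $I\subseteq Z(\Kan(P))\cap[T,T]=0$ by tightness. Relatedly, your reduction of arbitrary ideals to graded ones --- ``a minimal nonzero ideal can be taken homogeneous'' --- is not a standard fact and is false for $\bbZ$-graded Lie algebras in general (loop algebras are graded simple but far from simple). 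Your argument is correct for \emph{graded} ideals, i.e.\ it shows $\Kan(P)$ is graded simple when $P$ is simple; but even after replacing intersections by projections, one is left with the genuinely delicate case of an ideal $I$ with $\pi_{\pm1}(I)=P^{\pm}$ containing no nonzero homogeneous element. Ruling that out requires real Kantor-pair-specific work (for example, compositions of $\ad$'s by homogeneous elements of total degree $\pm4$ do land in $I\cap\Kan(P)_{\pm2}$, and one must parlay the resulting identities into a contradiction with simplicity of $P$); this is precisely the content of \cite[Prop.~2.7(iii)]{GLN}, and presumably why the authors cite it rather than reprove it.
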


Recall that the \emph{centroid} of
an algebra $\rL$ is the subalgebra $\Centrd(\rL)$ of the associative algebra  $\End(\rL)$
consisting of the endomorphisms of $\rL$ that commute with all
left and right multiplication operators. We say that $\rL$ is \emph{central}
if the homomorphism  $a \mapsto a\id_{\rL}$ from $\bR$ into $\Centrd(L)$ is an isomorphism.
If $\rL$ is simple,  then $\Centrd(\rL)$ is a field.
If $\rL$ is $G$-graded, where $G$ is an abelian group,
then $\Centrd(\rL,G) :=
\set{\chi\in \Centrd(\rL) \suchthat
\chi(\rL_g) \subseteq \rL_g \text{ for } g \in G}$ is a subalgebra of $\Centrd(\rL)$.

\begin{lemma}
\label{lem:centroid}  Suppose that $P$ is a Kantor pair.  Then the restriction map
$\chi \mapsto (\chi \mid_{P^-},\chi \mid_{P^+})$ is an
isomorphism of
$\Centrd(\Kan(P),\bbZ)$ onto the centroid $\Centrd(P)$ of $P$.
\end{lemma}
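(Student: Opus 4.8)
The plan is to show the restriction map is an injective algebra homomorphism whose image is all of $\Centrd(P)$. Throughout take $L = \Kan(P)$ with its standard $5$-grading and write $T := \TLP = P^-\oplus P^+$, so that Lemma~\ref{lem:TLP} and Proposition~\ref{prop:Kan5} give $L = T\oplus [T,T]$ with $\langle T\rangle_\text{alg} = L$ and $\Centre(L)\cap[T,T]=0$. I will use freely that a centroid element of a Lie algebra satisfies $\chi[a,b]=[\chi a,b]=[a,\chi b]$, and that $\Centrd(L,\bbZ)$ consists of the grading-preserving such $\chi$. The easy half is immediate: if $\chi\in\Centrd(L,\bbZ)$ then $\omega^\sg:=\chi\mid_{P^\sg}\in\End(P^\sg)$ because $\chi$ preserves $P^\sg=L_{\sg1}$, and applying $\chi[a,b]=[\chi a,b]=[a,\chi b]$ to $\{x^\sg,y^\msg,z^\sg\}=[[x^\sg,y^\msg],z^\sg]$ yields exactly the four defining identities of $\Centrd(P)$; the map is visibly multiplicative, and it is injective since $\chi\mid_T=0$ forces $\chi[a,b]=[\chi a,b]=0$ on $[T,T]$, hence $\chi=0$.

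For surjectivity, fix $(\omega^-,\omega^+)\in\Centrd(P)$ and let $\omega\colon T\to T$ act as $\omega^\sg$ on $P^\sg$. First I would recast $\omega$ as a centroid element of the triple system of Lemma~\ref{lem:LTS}, i.e.\ prove $\omega([[a,b],c])=[[\omega a,b],c]=[[a,\omega b],c]=[[a,b],\omega c]$ for $a,b,c\in T$. Reading this off the four formulas in \eqref{eq:LTS}, it reduces to the defining identities of $\Centrd(P)$ together with their consequences $K(\omega^\sg x^\sg,z^\sg)=K(x^\sg,\omega^\sg z^\sg)=\omega^\sg K(x^\sg,z^\sg)$ and $K(x^\sg,z^\sg)\,\omega^\msg=\omega^\sg K(x^\sg,z^\sg)$, each obtained by applying the centroid identities slot-by-slot to the definition of $K$. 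In particular $\ad([\omega a,b])\mid_T=\omega\circ\ad([a,b])\mid_T$ and $\omega$ commutes with $\ad(c)\mid_T$ for every $c\in[T,T]$.

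I then define $\chi$ on $[T,T]$ by $\chi([a,b]):=[\omega a,b]$, extended linearly. The main obstacle is well-definedness, and this is exactly where tightness is needed. Consider $\ad\colon[T,T]\to\End(T)$, $c\mapsto \ad(c)\mid_T$; it is \emph{injective}, because if $c\in[T,T]$ kills $T$ then $c$ centralizes the generating set $T$, hence centralizes $\langle T\rangle_\text{alg}=L$, so $c\in\Centre(L)\cap[T,T]=0$. Since $\ad([\omega a,b])\mid_T=\omega\circ\ad([a,b])\mid_T$ by the previous paragraph, any relation $\sum_i[a_i,b_i]=0$ gives $\ad\bigl(\sum_i[\omega a_i,b_i]\bigr)\mid_T=\omega\circ\ad\bigl(\sum_i[a_i,b_i]\bigr)\mid_T=0$, whence $\sum_i[\omega a_i,b_i]=0$ by injectivity; the same injectivity and the second displayed $K$-symmetry show $[\omega a,b]=[a,\omega b]$, so $\chi$ does not depend on the chosen slot.

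Finally set $\chi\mid_T=\omega$. Grading is preserved because $[\omega a,b]$ and $[a,b]$ have the same degree and $\omega$ respects the degrees of $T$. To confirm $\chi\in\Centrd(L)$ I would use that the set of $x$ with $\chi\,\ad(x)=\ad(x)\,\chi$ is a subalgebra; since $T$ generates $L$, it suffices to check $\chi([t,g])=[t,\chi g]$ for $t\in T$ and $g\in T\cup[T,T]$. For $g\in T$ this is the definition of $\chi$ together with $[\omega t,g]=[t,\omega g]$; for $g\in[T,T]$ both sides equal $\omega([t,g])$, using $\ad(\chi g)\mid_T=\omega\circ\ad(g)\mid_T$. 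Thus $\chi$ is a grading-preserving centroid element restricting to $(\omega^-,\omega^+)$, which finishes surjectivity. The whole argument turns on the faithfulness of the $\ad$-action of $[T,T]$ on $T$, equivalently on the tightness of the standard $5$-grading supplied by Proposition~\ref{prop:Kan5}.
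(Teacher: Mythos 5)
Your proof is correct, but it takes a genuinely different route from the paper's. The paper's proof of surjectivity is essentially one line: it exploits the concrete matrix model $\Kan(P) = \fS(P) \oplus \smat{P^-\\ P^+} \subseteq \End\smat{P^-\\ P^+} \oplus \smat{P^-\\ P^+}$ and defines the extension of $\omega=(\omega^-,\omega^+)$ by the closed formula $\chi(X) = \smat{\omega^- & 0\\ 0 & \omega^+}X$ (left multiplication by the block-diagonal matrix); well-definedness is then automatic, and membership in $\Centrd(\Kan(P),\bbZ)$ is a routine computation with the $D$- and $K$-operators using the centroid identities. You instead never open the box of the Kantor construction: you use only that $\Kan(P)$ tightly envelops $P$ (Proposition \ref{prop:Kan5}) and that $L = [\TLP,\TLP]\oplus \TLP$ (Lemma \ref{lem:TLP}), define $\chi$ on $[T,T]$ by $[a,b]\mapsto[\omega a,b]$, and earn well-definedness from the faithfulness of the $\ad$-action of $[T,T]$ on $T$ --- which is exactly where tightness ($Z(L)\cap[T,T]=0$ together with $\langle T\rangle_\text{alg}=L$) enters. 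This is the same technique the paper itself uses later, in the proof of Lemma \ref{lem:LdetP}. What your argument buys is generality: it shows that for \emph{any} $5$-graded Lie algebra $L$ tightly enveloping $P$, restriction gives an isomorphism $\Centrd(L,\bbZ)\to\Centrd(P)$, with the matrix realization of $\Kan(P)$ playing no role (of course, by Corollary \ref{cor:Kanchar} the two formulations are equivalent, but yours does not need that corollary). What the paper's approach buys is brevity: the explicit model makes the extension visible at a glance, at the price of being tied to that model. Your derived $K$-identities $K(\omega^\sg x^\sg,z^\sg)=K(x^\sg,\omega^\sg z^\sg)=\omega^\sg K(x^\sg,z^\sg)=K(x^\sg,z^\sg)\omega^\msg$ are exactly the computations hidden in the paper's ``one checks easily,'' so the two proofs ultimately rest on the same algebra, organized differently.
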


\begin{proof} All but surjectivity is clear.
For surjectivity, suppose that $\omega = (\omega^-, \omega^+)\in \Centrd(P)$.
Define $\chi : \Kan(P) \to \Kan(P)$ by
$\chi(X) = \smat{\omega^- & 0 \\ 0 & \omega^+}X$.
Then one checks easily that
$\chi\in \Centrd(\Kan(P),\bbZ)$; and clearly $\chi$ restricts to~$\omega$.
\end{proof}

\begin{proposition}
\label{prop:centroid}  Suppose
that $P$ is a simple Kantor pair over~$\bR$. Then
we have $\Centrd(\Kan(P),\bbZ) = \Centrd(\Kan(P))$.  Furthermore, the restriction
map is an isomorphism of $\Centrd(\Kan(P))$ onto
$\Centrd(P)$, so $P$ is central if and only if $\Kan(P)$ is central.
\end{proposition}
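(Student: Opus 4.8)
The plan is to reduce everything to Lemma \ref{lem:centroid} by first showing that the full centroid of $\Kan(P)$ coincides with its graded centroid; the remaining assertions then follow formally. Throughout write $L = \Kan(P)$ with its standard $5$-grading, and recall from Proposition \ref{prop:simpleKan} that $L$ is simple because $P$ is.

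First I would establish $\Centrd(L) = \Centrd(L,\bbZ)$. Since $L = \bigoplus_i L_i$ has support contained in the finite set $\set{-2,\dots,2}$, every $\chi \in \End(L)$ decomposes uniquely as $\chi = \sum_g \chi_g$ with $\chi_g(L_i) \subseteq L_{i+g}$, the sum running over finitely many $g\in\bbZ$. A degreewise comparison in the centroid identities $\chi[x,y] = [\chi x, y] = [x,\chi y]$ (evaluated on homogeneous $x,y$) shows that each component $\chi_g$ again lies in $\Centrd(L)$ whenever $\chi$ does. Thus it suffices to prove that a homogeneous $\psi \in \Centrd(L)$ of degree $g \ne 0$ must vanish.

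For such a $\psi$, both $\Ker(\psi)$ and $\psi(L)$ are ideals of $L$: if $\psi y = 0$ then $\psi[x,y] = [x,\psi y] = 0$, while $[\psi x, y] = \psi[x,y] \in \psi(L)$. By simplicity of $L$, if $\psi \ne 0$ then $\Ker(\psi) = 0$ and $\psi(L) = L$, so $\psi$ is bijective; since $\psi$ is homogeneous, comparing graded components forces each restriction $\psi|_{L_i} : L_i \to L_{i+g}$ to be a bijection, with no finiteness assumption on $\dim(P)$. Letting $m$ be the least index with $L_m \ne 0$ (which exists because $L \ne 0$ has support in $\set{-2,\dots,2}$), the case $g > 0$ yields a bijection $L_{m-g} \to L_m$ with $L_{m-g} = 0$, forcing $L_m = 0$, a contradiction; the case $g < 0$ is symmetric using the greatest index of the support. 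Hence $\psi = 0$, and $\Centrd(L) = \Centrd(L,\bbZ)$.

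Finally, since every $\chi \in \Centrd(L)$ now preserves each $L_i$, in particular $L_{\pm 1} = P^\pm$, the restriction map of Lemma \ref{lem:centroid}, which is an isomorphism of $\Centrd(L,\bbZ) = \Centrd(L)$ onto $\Centrd(P)$, is the asserted isomorphism. This isomorphism carries $a\,\id_L$ to $a(\id_{P^-},\id_{P^+})$ for $a\in\bbF$, so it identifies the canonical maps $\bbF \to \Centrd(L)$ and $\bbF \to \Centrd(P)$; consequently one is an isomorphism if and only if the other is, i.e.\ $\Kan(P)$ is central precisely when $P$ is. The only substantive step is the gradedness of the centroid in the middle paragraphs, where the combination of simplicity with the boundedness of the $5$-grading does the work.
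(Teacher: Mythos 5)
Your proof is correct, and it takes a genuinely different route at the one substantive step. The paper's own proof is essentially a citation: after noting that $\Kan(P)$ is simple (Proposition \ref{prop:simpleKan}) and that $\Kan(P)_0 = [\Kan(P)_{-1},\Kan(P)_1]$, it obtains the equality $\Centrd(\Kan(P),\bbZ) = \Centrd(\Kan(P))$ from Zelmanov \cite[Lemma~1.6(a)]{Z2}, and then finishes, exactly as you do, with Lemma \ref{lem:centroid}. You instead prove that equality from scratch: the finite support of the $5$-grading lets you split any centroidal $\chi$ into homogeneous components, a degreewise comparison shows each component is again centroidal, and simplicity together with the boundedness of the grading kills every component of nonzero degree (a nonzero homogeneous centroidal map would be bijective, hence would restrict to bijections $\Kan(P)_i \to \Kan(P)_{i+g}$, which is impossible at an extreme index of the support). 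All of these steps are sound, as is your closing observation that the restriction isomorphism matches up the two canonical maps from $\bbF$, so centrality transfers. What your route buys is self-containedness and a little extra generality: you never use the identity $\Kan(P)_0 = [\Kan(P)_{-1},\Kan(P)_1]$ that the paper needs as a hypothesis for Zelmanov's lemma, and your argument in fact shows that the centroid of any simple Lie algebra with a $\bbZ$-grading of finite support is concentrated in degree zero. What the paper's route buys is brevity and a pointer to the literature where such graded-centroid results are treated systematically.
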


\begin{proof}
 In view  of Lemma \ref{lem:centroid}, it is enough to show the first statement.
This follows from  \cite[Lemma 1.6(a)]{Z2} when $\bR$ is a field.  In general, note that
$\Kan(P)$ is simple by Proposition \ref{prop:simpleKan},
so $\Kan(P)$ is a finitely generated module for its multiplication
algebra.  Therefore by \cite[(2.15)]{BN}, $\Centrd(\Kan(P))$ is naturally $\bbZ$-graded with $\Centrd(\Kan(P))_0 = \Centrd(\Kan(P),\bbZ)$. But this grading is trivial since $\Centrd(\Kan(P))$ is a field.
\end{proof}

\begin{corollary}  \label{cor:centroid} If $P$  is a Kantor pair, then
$P$ is central simple if and only if $\Kan(P)$ is  central simple.
\end{corollary}

The next proposition lists facts about Kantor pairs that are
analogues of well-known facts for algebras.
The first two of these
tell us that the study of simple Kantor pairs  over a field $\bR$
is reduced to the study of central simple Kantor pairs over extension fields of $\bR$.

\begin{proposition}  \label{prop:extension}  Suppose that $P$ is a Kantor pair over $\bR$.
\begin{itemize}
  \item [(i)] If $P$ is simple, then  $P$
is a central simple Kantor pair over the field $\Centrd(P)$.
  \item [(ii)] If  $\bR$ is a field  and $P$ is a central simple Kantor pair over a field $\bbF$ containing $\bR$, then $P$ is a simple Kantor pair over $\bR$ with centroid $\bbF$.
  \item  [(iii)] If  $\bR$ is a field, then $P$ is a central simple Kantor pair over $\bR$ if and only if
$P_\bbF$ is simple over $\bbF$ for all fields $\bbF$ containing $\bR$.
\end{itemize}
\end{proposition}

\begin{proof}  Using Propositions
\ref{prop:simpleKan}, Corollary \ref{cor:centroid} and Lemma \ref{lem:extendKan}, all these statements follow from the corresponding statements for Lie algebras.  The statement corresponding to (i)
is \cite[Thm.~II.1.6.3(2)]{Mc}; the statement corresponding to (ii) follows from the second part of
\cite[Thm.~X.3]{J3} (with $\Gamma = \Delta = \bbF$ and $\Phi = \bR$);
and the statement corresponding to (iii) is \cite[Thm.~II.1.6.3(2)]{Mc}.
\end{proof}

\subsection{$5$-graded Lie algebras enveloping a Kantor pair.}
\label{subsec:LdetP}
In this   subsection,   we use as usual the standard $5$-grading on each  Kantor Lie algebra.

\begin{lemma}  \label{lem:LdetP} Suppose that $P$ and $P'$ are Kantor pairs and $L$ is a $5$-graded Lie algebra
that tightly  envelops $P$.  Let $\chi : P \to P'$ be an surjective homomorphism of Kantor pairs.
Then there exists a unique $5$-graded algebra homomorphism $\varphi: L \to \Kan(P')$ that extends
$\tilde \chi := \chi^-\oplus \chi^+: \TLP \to T_{\Kan(P')}(P')$. Furthermore, $\varphi$ is surjective and
\begin{equation}\label{eq:induceKer}
\ker (\varphi)=\Big\{d\in [\TLP,\TLP]:[d,\TLP]\subseteq \ker(\tilde \chi)\Big\}+\ker(\tilde \chi).
\end{equation}

\end{lemma}

\begin{proof}  Let $T = T_L(P)$. By assumption,  $L = \langle T \rangle_\text{alg}$; so uniqueness in the lemma is clear and, by Lemma \ref{lem:TLP},
we have  $L = [T,T]\oplus T$,
 Next, let $L' = \Kan(P')$, $T' = T_{L'}(P')$.
One sees using Lemma \ref{lem:LTS} that
$\tilde \chi([[x,y],z]) = [[\tilde \chi(x),\tilde \chi(y)],\tilde \chi(z)]$ for $x,y,z\in T$.

Since $L=[T,T]\oplus T$ and $\varphi|_T=\tilde \chi$, we only need to define $\varphi$ on $[T,T]$. So, we consider $\sum_i [x_i,y_i]\in [T,T]$ for $x_i,y_i\in T$ and set
\begin{equation}\label{eq:induce} \textstyle
\varphi(\sum_i [x_i,y_i])=\sum_i [\tilde \chi(x_i),\tilde \chi(y_i)].
\end{equation}
If $\sum_i [x_i,y_i]=0$, then
$[\sum_i [\tilde \chi(x_i),\tilde \chi(y_i)],\tilde \chi(z)]=\tilde \chi([\sum_i[x_i,y_i],z])=0$ for any $z\in T$.
It follows that $\sum_i [\tilde \chi(x_i),\tilde \chi(y_i)]\in Z(L')\cap [T',T']=0$ since $\tilde \chi$ is surjective and $T'$ generates $L'$. Thus $\varphi$ is well-defined.

It is clear that $\varphi$ is $\bbZ$-graded, and  one checks directly that $\varphi$ is a homomorphism of Lie algebras and that (\ref{eq:induceKer}) holds.
\end{proof}

Applying Lemma \ref{lem:LdetP} with  $P' = P$ and $\chi = (\id_{P^-},\id_{P^+})$, we obtain the following corollary:

\begin{corollary}
\label{cor:Kanchar}
If  $L$ is a $5$-graded Lie algebra that tightly  envelops a Kantor pair $P$
then there exists a unique $5$-graded algebra  isomorphism $\varphi : L \to \Kan(P)$
that restricts to the identity map on $\TLP$.
\end{corollary}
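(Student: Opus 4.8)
The plan is to obtain this corollary as an immediate specialization of Lemma \ref{lem:LdetP}, taking $P' = P$ and letting $\chi = (\id_{P^-},\id_{P^+})$ be the identity automorphism of $P$. Since the identity pair is trivially an isomorphism of Kantor pairs, and since the hypothesis that $L$ tightly envelops $P$ is exactly what the corollary assumes, all hypotheses of the lemma are in force. The lemma therefore supplies a unique $5$-graded algebra isomorphism $\ph : L \to \Kan(P)$ extending $\chi^-\oplus\chi^+ : \TLP \to T_{\Kan(P)}(P)$, and the existence and uniqueness asserted in the corollary come directly from the existence and uniqueness in the lemma.

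The only point requiring attention is the translation between the lemma's phrasing ($\ph$ \emph{extends} $\chi^-\oplus\chi^+$) and the corollary's phrasing ($\ph$ \emph{restricts to the identity} on $\TLP$). Here I would invoke the standing identification, fixed just before Proposition \ref{prop:Kan5}, of $P^\sg$ with $\Kan(P)_{\sg 1}$ for $\sg = \pm$. Under this identification the target $T_{\Kan(P)}(P) = \Kan(P)_{-1}\oplus\Kan(P)_1$ becomes $P^-\oplus P^+$, which is also exactly how $\TLP$ sits inside $L$. With these identifications the map $\chi^-\oplus\chi^+$ is literally $\id_{P^-}\oplus\id_{P^+}$, the identity on $\TLP = P^-\oplus P^+$, so ``extends $\chi^-\oplus\chi^+$'' reads precisely as ``restricts to the identity on $\TLP$'', which is what the corollary claims.

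I do not expect any genuine obstacle, since all the mathematical content has already been carried out in the proof of Lemma \ref{lem:LdetP} --- where the isomorphism is built from the adjoint action via \eqref{eq:induce} and shown to be a $\bbZ$-graded Lie homomorphism with trivial kernel using the tight-enveloping condition $[\TLP,\TLP]\cap Z(L) = 0$. The corollary adds nothing beyond instantiating $P' = P$ and $\chi = (\id_{P^-},\id_{P^+})$, so the sole thing to be careful about is the identification bookkeeping described above.
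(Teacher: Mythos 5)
Your proposal is correct and is exactly the paper's own argument: the authors obtain Corollary \ref{cor:Kanchar} by applying Lemma \ref{lem:LdetP} with $P' = P$ and $\chi = (\id_{P^-},\id_{P^+})$, using the standing identification of $P^\sg$ with $\Kan(P)_{\sg 1}$ so that ``extends $\chi^-\oplus\chi^+$'' reads as ``restricts to the identity on $\TLP$.'' Your attention to that identification is the only bookkeeping involved, and you handled it correctly.
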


Also, applying Lemma \ref{lem:LdetP} with  $L= \Kan(P)$,  we obtain:

\begin{corollary}\label{cor:funcKP} Suppose that
$P$ and $P'$ are Kantor pairs.  If $\varphi: \Kan(P) \to \Kan(P')$ is a $5$-graded isomorphism, then
$(\varphi|_{P^-}, \varphi|_{P^+})$ is an isomorphism of $P$ onto $P'$.  Conversely, if
$\chi = (\chi^-,\chi^+)$ is an isomorphism of $P$ onto $P'$,
there exists
a unique  $5$-graded isomorphism $\varphi: \Kan(P) \to \Kan(P')$ that extends
$\chi^-\oplus \chi^+: \TLP \to T_{\Kan(P')}(P')$.
\end{corollary}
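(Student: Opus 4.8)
The plan is to derive Corollary \ref{cor:funcKP} directly from Lemma \ref{lem:LdetP} and Corollary \ref{cor:Kanchar}, treating the two directions separately. The key observation is that $\Kan(P)$ tightly envelops $P$ by Proposition \ref{prop:Kan5}, so Lemma \ref{lem:LdetP} applies with $L = \Kan(P)$.

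For the converse direction (which I would do first, since it is the more substantive assertion), I would take an isomorphism $\chi = (\chi^-,\chi^+)$ of $P$ onto $P'$ and simply invoke Lemma \ref{lem:LdetP} with $L = \Kan(P)$. Since $\Kan(P)$ tightly envelops $P$, the lemma produces a unique $5$-graded algebra isomorphism $\ph : \Kan(P) \to \Kan(P')$ extending $\chi^-\oplus\chi^+ : \TLP \to T_{\Kan(P')}(P')$, where here $T_{\Kan(P)}(P) = \TLP$ under the identification $L = \Kan(P)$. This is essentially immediate, so the work is purely a matter of citing the lemma with the right specialization.

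For the forward direction, I would start with a $5$-graded isomorphism $\ph : \Kan(P) \to \Kan(P')$. The point is that $\ph$ maps $\Kan(P)_{\sg 1}$ onto $\Kan(P')_{\sg 1}$ because it is $\bbZ$-graded, and under our standing identifications these graded pieces are exactly $P^\sg$ and $(P')^\sg$. So setting $\chi^\sg = \ph|_{P^\sg}$ gives linear bijections $\chi^\sg : P^\sg \to (P')^\sg$. To check that $\chi = (\chi^-,\chi^+)$ is a homomorphism of trilinear pairs, I would use the formula $\{x^\sg,y^\msg,z^\sg\} = [[x^\sg,y^\msg],z^\sg]$ from \Subsec \ref{subsec:5grading}: since $\ph$ is a Lie algebra homomorphism, $\ph([[x^\sg,y^\msg],z^\sg]) = [[\ph x^\sg,\ph y^\msg],\ph z^\sg]$, and reading both sides back through the identifications of $P^\sg$ and $(P')^\sg$ with the degree-$\sg 1$ components gives $\chi^\sg\{x^\sg,y^\msg,z^\sg\} = \{\chi^\sg x^\sg, \chi^\msg y^\msg, \chi^\sg z^\sg\}$, as required.

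I do not anticipate a genuine obstacle here; the statement is a packaging of the preceding lemma and corollary, and the only care needed is bookkeeping with the identifications of $P^\sg$ with $\Kan(P)_{\sg 1}$. The one point worth stating explicitly is compatibility: the $\ph$ produced in the converse direction does restrict on $P^\sg$ to $\chi^\sg$, which is exactly the pair recovered by the forward direction, so the two constructions are mutually inverse in the expected sense. This compatibility and the uniqueness clause both come for free from the uniqueness already established in Lemma \ref{lem:LdetP}, so no separate argument is needed.
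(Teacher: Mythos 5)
Your proposal is correct and follows essentially the same route as the paper: the converse is obtained by citing Lemma \ref{lem:LdetP} with $L = \Kan(P)$ (using Proposition \ref{prop:Kan5} that $\Kan(P)$ tightly envelops $P$), and the forward direction is the graded-pieces-plus-bracket argument that the paper dismisses as ``clear.'' Your explicit write-up of that forward direction is a faithful expansion of what the paper leaves implicit, so there is nothing to correct.
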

\begin{proof}  The first statement is clear.  The converse follows from
Corollary \ref{prop:Kan5} and  Lemma \ref{lem:LdetP}
(with $L = \Kan(P)$).
\end{proof}

\begin{proposition}  \label{prop:simpletight}
Suppose $P$  is a nonzero Kantor pair and  $L$ is a simple $5$-graded Lie algebra that  envelops
$P$.  Then $L$ tightly envelops $P$.
\end{proposition}

\begin{proof}  By Lemma \ref{lem:TLP},
$\langle \TLP \rangle_\text{alg}$ is
an ideal of $L$, which is nonzero since $P\ne 0$.  Also $Z(L)\cap [\TLP,\TLP]$ is a ideal of $L$ which is proper  since $P\ne~0$.
\end{proof}

The  next   theorem will be among the basic tools in our study of simple
Kantor pairs in this paper and in \cite{AF2} and \cite{AS}.  It tells   us in particular that
each simple Kantor pair is  enveloped by a unique
simple $5$-graded Lie algebra.

\begin{theorem}
\label{thm:LdetP}
Suppose
that $P$ is a nonzero Kantor pair.  Then
\begin{itemize}
\item[(i)]
The following statements are equivalent:
\begin{itemize}
  \item[(a)]  $P$ is simple
  \item[(b)]  There exists a simple $5$-graded Lie algebra $L$ that  envelops $P$.
\end{itemize}

\item[(ii)]  If $L$ is  simple $5$-graded Lie algebra that  envelops $P$,
then there exists a unique $5$-graded isomorphism of $L$ onto $\Kan(P)$ that extends the identity on~$\TLP$.
\item[(iii)] Statements (a) and (b)  in (i) with simple replaced by central simple are equivalent.
\end{itemize}
\end{theorem}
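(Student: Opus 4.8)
The plan is to reduce every assertion to a comparison of the given $5$-graded Lie algebra with the canonical model $\Kan(P)$, exploiting the fact (Lemma \ref{lem:simpletight}) that a \emph{simple} $5$-graded Lie algebra enveloping a nonzero $P$ automatically envelops it \emph{tightly}, so that the rigidity furnished by Corollary \ref{cor:Kanchar} becomes available. For part (i), I would prove (a)$\Rightarrow$(b) by simply taking $L = \Kan(P)$: by Proposition \ref{prop:Kan5} this envelops $P$, and by Proposition \ref{prop:simpleKan} it is simple whenever $P$ is. For (b)$\Rightarrow$(a), given a simple $5$-graded $L$ enveloping the nonzero pair $P$, Lemma \ref{lem:simpletight} upgrades ``envelops'' to ``tightly envelops,'' whence Corollary \ref{cor:Kanchar} produces a $5$-graded isomorphism $L \cong \Kan(P)$; simplicity of $L$ transfers to $\Kan(P)$, and Proposition \ref{prop:simpleKan} returns simplicity of $P$. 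Part (ii) then requires no new argument: the same combination of Lemma \ref{lem:simpletight} and Corollary \ref{cor:Kanchar} yields precisely the asserted unique $5$-graded isomorphism of $L$ onto $\Kan(P)$ restricting to the identity on $\TLP$.

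For part (iii), I would bootstrap from (i) and (ii), now over a field, so that only centrality must be tracked alongside the simplicity already handled in (i). If $P$ is central simple, then $\Kan(P)$ is simple and, by Proposition \ref{prop:centroid}, central, so $\Kan(P)$ is a central simple $5$-graded Lie algebra enveloping $P$. Conversely, if $L$ is a central simple $5$-graded Lie algebra enveloping $P$, then (i) gives simplicity of $P$ and (ii) gives a graded isomorphism $L \cong \Kan(P)$; since the centroid is an isomorphism invariant, $\Kan(P)$ inherits centrality from $L$, and Proposition \ref{prop:centroid} then converts centrality of $\Kan(P)$ into centrality of $P$.

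Part (iv) should follow by reading the graded isomorphism of (ii) one grade at a time: it identifies $L_{\sg 2}$ with $\Kan(P)_{\sg 2}$ for $\sg = \pm 1$, and the definition of balanced $2$-dimension in \Subsec \ref{subsec:symdim} equates $\dim(\Kan(P)_{\sg 2})$ with the balanced $2$-dimension of $P$. I expect no genuinely hard step here; the one point demanding care is the bookkeeping in part (iii), where I must note explicitly that the $5$-graded isomorphism supplied by (ii) is in particular an ordinary algebra isomorphism, so that it carries $\Centrd(L)$ onto $\Centrd(\Kan(P))$ and sends scalar endomorphisms to scalar endomorphisms, before centrality of $\Kan(P)$ may legitimately be fed into Proposition \ref{prop:centroid}.
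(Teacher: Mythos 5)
Your proposal is correct and follows essentially the same route as the paper: both reduce everything to $\Kan(P)$ via Proposition \ref{prop:Kan5}, Proposition \ref{prop:simpleKan}, Lemma \ref{lem:simpletight} and Corollary \ref{cor:Kanchar}, handle centrality in (iii) by substituting Proposition \ref{prop:centroid} into the same argument, and obtain (iv) by reading the graded isomorphism of (ii) degree by degree. Your write-up merely makes explicit some bookkeeping (e.g.\ that a graded isomorphism preserves centroids) that the paper leaves tacit.
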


\begin{proof}  If (a) holds in (i), we know  from
Propositions \ref{prop:Kan5}  and \ref{prop:simpleKan}
that $\Kan(P)$ is a simple $5$-graded Lie algebra that  envelops $P$.
Conversely, suppose (b) holds. Then by Proposition \ref{prop:simpletight} and Corollary \ref{cor:Kanchar} we have a unique $5$-graded isomorphism as indicated in (ii).  So $P$ is simple by Proposition \ref{prop:simpleKan}.  This proves (i) and (ii); and
(iii) is proved by the same argument using
Corollary   \ref{cor:centroid}.
\end{proof}

\begin{remark} \label{rem:fdclass} Suppose that $\bR$ is an algebraically closed field
of characteristic~0.  Here we sketch an argument due to Kantor \cite{K1} for the classification
 of finite dimensional simple Kantor pairs in terms of weighted Dynkin diagrams. (Kantor worked with Kantor triple systems.)
We fix a finite dimensional simple Lie algebra $\cG$
of type $\type{X}{n}$ with Cartan subalgebra $\cH$, root system $\Sigma$, root spaces $\cG_\alpha$ for $\alpha \in Q_\Sigma$, base $\Pi = \set{\mu_1,\dots,\mu_n}$ for $\Sigma$, and highest root $\mu^+$.  We view $\Pi$ as usual as a Dynkin diagram.  If ${\mathbf p} =(p_1,\dots,p_n)$ is an $n$-tuple of non-negative integers, we call $(\Pi,{\mathbf p})$ a \emph{weighted Dynkin diagram} and we define
$\chi_{{\mathbf p}} : Q_\Sigma \to \bbZ$ by
$\chi_{{\mathbf p}}(\sum_{j = 1}^n k_j \mu_j) = \sum_{j = 1}^n k_j p_j$.

If $(\Pi,\mathbf p)$ is a weighted Dynkin diagram, then $\cG = \oplus_{i\in \bbZ} \cG_i$ is a $\bbZ$-grading, where $\cG_i$ is the sum of all $\cG_\mu$ for $\mu \in Q_\Sigma$ with
$\chi_{{\mathbf p}}(\mu) = i$.   Moreover it is well-known that any $\bbZ$-graded
finite dimensional simple Lie algebra of type $\type{X}{n}$ is graded isomorphic to one obtained in this way (see \cite[Prop.~12]{K1}, \cite[Thm.~1]{D} or \cite[\S~3.3.5]{GOV}).  Clearly this grading
$\cG = \oplus_{i\in \bbZ} \cG_i$ is a $5$-grading with $(\cG_{-1},\cG_{1}) \ne 0$ if and only~if
\begin{equation} \label{eq:pcond}
\chi_{\mathbf p}(\mu^+) \le 2 \andd p_j = 1 \text{ for some $j$}.
\end{equation}
We see, using Theorem \ref{thm:LdetP}(i) and (ii), that if
if \eqref{eq:pcond} holds then $(\cG_{-1},\cG_{1})$ is a simple Kantor pair whose Kantor Lie algebra has type $\type{X}{n}$; and conversely any such Kantor pair is isomorphic to one that arises in this way for some ${\mathbf p}$ satisfying \eqref{eq:pcond}.  We will discuss two examples for type $\type{E}{6}$ in Remark \ref{rem:E6example}(i).

In \cite{AF2}, we will extend the method just described to  study other types of root gradings of $\cG$ (in particular $\BCtwo$-gradings), and use this to obtain a Dynkin diagram interpretation of  reflection.
\end{remark}

\subsection{Split Lie algebras of type $\type Xn$ and their forms}
\label{subsec:forms}
\emph{In this subsection, we do not assume  that $\bR$ contains~$\frac 16$.
Let $X_{n}$ be one of the
types of an irreducible reduced root system.
}

\begin{lemma}
\label{lem:form}Suppose  $L$ is a Lie algebra that is finitely generated as a module,
and suppose $\bbF\in \Kalg$ is flat.
 Then $Z(L)_\bbF \simeq Z(L_\bbF)$ and
$(L/Z(L))_\bbF\simeq L_\bbF/Z(L_\bbF)$
 as Lie algebras over $\bbF$.
\end{lemma}

\begin{proof}
Let $x_1,\ldots,x_{\ell}$ generate $L$ as a module.
The sequence
$0\rightarrow Z(L)\overset{\iota}\rightarrow L\overset{\eta
}\rightarrow L^{\ell}$
is exact, where $\iota$ is the inclusion map and $\eta(y)=([y,x_1],\ldots,[y,x_{\ell}])$.  So
$0\rightarrow Z(L)_\bbF\overset{\iota_\bbF}
\rightarrow L_\bbF \overset{\eta_\bbF}\rightarrow L_\bbF^{\ell}$
is exact.
Thus $\iota_\bbF$ is an isomorphism of $Z(L)_\bbF$ onto
$Z(L_\bbF)$ (as $\bbF$-modules and hence as $\bbF$-algebras).
Moreover, the sequence
$0 \to Z(L) \overset{\iota} \rightarrow L \overset{\pi} \rightarrow L/Z(L)\to 0$ is exact and thus so is
$0 \to Z(L)_\bbF  \overset{\iota_\bbF} \rightarrow L_\bbF
\overset{\pi_\bbF} \rightarrow(L/Z(L))_\bbF \to 0$.
Hence $(L/Z(L))_\bbF\simeq L_\bbF/\iota_\bbF(Z(L)_\bbF) = L_\bbF/Z(L_\bbF)$ as $\bbF$-algebras.
\end{proof}

\begin{definition}
Let $\fg(\mathbb{C)}$ be
a finite dimensional simple Lie algebra of type $X_{n}$ over the complex field
$\bbC $, and choose a Chevalley basis $B$ for $\fg(\mathbb{C)}$
\cite[\S 25]{H}. Then the $\bbZ$-span $\mathfrak{g(}\bbZ)$ of $B$
is a Lie algebra over $\bbZ$ which depends up to isomorphism only on the
type $X_{n}$.  A Lie algebra isomorphic to $\fg(\bR):=\fg(\mathbb{Z)}_{\bR}$
is called the \textit{Chevalley algebra of
type} $X_{n}$ over $\bR$, and a Lie algebra isomorphic to the
quotient algebra $\fg(\mathbb{K)}/Z(\fg(\mathbb{K)})$ is called the
\textit{split Lie algebra of type} $X_{n}$ over $\bR$.
\end{definition}

\begin{remark}
If $\bR$ is a field of characteristic $\ne 2$ or $3$,  the (finite dimensional) split Lie algebra of type $\type{X}{n}$ is   central simple and studied in detail in
 \cite{Sel}, where it is called the \emph{classical simple Lie algebra}
of type $\type{X}{n}$. Furthermore, if $\bR$ is a field of characteristic $0$, the split Lie algebra of type $\type{X}{n}$ is the \emph{split simple Lie algebra of type  $\type{X}{n}$} defined and studied for example in  \cite[Chap.~IV]{J3}.
\end{remark}

\begin{definition} Suppose that $L$ is a Lie algebra.  We say that $L$ is a \emph{form
of the Chevalley algebra of type $\type{X}{n}$} (resp.~ a \emph{form of the
split Lie algebra of type $\type{X}{n}$)}
if for some faithfully flat $\bbF\in \Kalg$, $L_\bbF$ is the
Chevalley algebra of type $\type{X}{n}$ (resp.~the split Lie algebra of type $\type{X}{n}$) over $\bbF$.
\end{definition}

\begin{remark} \label{rem:chevalleyform} If $L$ is a form of the Chevalley algebra of type $\type{X}{n}$,
then by Lemma \ref{lem:form}, $L/Z(L)$ is a form of the split simple Lie algebra of type $\type{X}{n}$.
\end{remark}

\begin{remark} \label{rem:chevalleyF}  (i)
 If $L$ is the Chevalley algebra of type $\Xn$ and $\bbF\in \Kalg$, then $L_\bbF$ is the Chevalley algebra of type $\Xn$ over $\bbF$.
Hence, by Lemma \ref{lem:form}, if $\bbF$ is flat and $L$ is the split
Lie algebra of type $\Xn$, then $L_\bbF$ is the split
Lie algebra of type $\Xn$ over $\bbF$.

(ii) If $\bR$ is a field and $L$ is a form
of the Chevalley algebra of type $\type{X}{n}$ (resp.~ a form of the
split Lie algebra of type $\type{X}{n}$), then it is easy to see using (i) that  $L_\bbF$ is the
Chevalley algebra of type $\type{X}{n}$ (resp.~the split Lie algebra of type $\type{X}{n}$) over $\bbF$
\emph{for some field $\bbF$ containing $\bR$}.
\end{remark}

\subsection{Split Kantor pairs of type $\type Xn$ and their forms}
\label{subsec:splitKP}
\emph{We return to our assumption that $\frac 16 \in \bR$.}

\begin{definition}   Suppose $P$ is a Kantor pair.  We say $P$
is a \emph{split Kantor pair of type $\Xn$} if $\Kan(P)$  is the
split Lie algebra of type $\Xn$; and we say that $P$ is a
\emph{form of a split Kantor pair of type $\Xn$} if for some
faithfully flat $\bbF\in \Kalg$, $P_\bbF$ is a split Kantor pair of type $\Xn$ over $\bbF$.
\end{definition}

The reader should keep in mind that, unlike the situation for Lie algebras, there can be non-isomorphic split Kantor pairs of type $\Xn$. This is already true for Jordan pairs; and the same    type can even encompass Jordan as well as non-Jordan Kantor pairs (see Example~\ref{ex:Dnfd}).

\begin{lemma}  \label{lem:splitform} Suppose $P$ is a Kantor pair.
Then $P$ is a form of a split Kantor pair of type $\Xn$ if and only if
$\Kan(P)$ is a form of the split Lie algebra of type $\Xn$.
\end{lemma}

\begin{proof} Let $\bbF \in \Kalg$ be faithfully flat and let $L$
be the split Lie algebra of type $\Xn$ over $\bbF$.    It is sufficient to show that
$\Kan(P_\bbF) \simeq L$  if and only if $\Kan(P)_\bbF \simeq L$.

If  $\Kan(P_\bbF) \simeq L$, then $\Kan(P_\bbF)$ is a finitely generated $\bbF$-module
and hence so is each $P^\sg_\bbF$ (see Remark \ref{rem:KP}(ii)).  Thus each $P^\sg$ is a finitely generated $\bR$-module
\cite[I.3.6, Prop.~11]{B2}, so $\Kan(P_\bbF) \simeq \Kan(P)_\bbF$ by Lemma \ref{lem:extendKan}.

Conversely, if  $\Kan(P)_\bbF \simeq L$, then $\Kan(P)_\bbF$ is a finitely generated $\bbF$-module.
Hence $\Kan(P)$ is a finitely generated $\bR$-module \cite[I.3.6, Prop.~11]{B2}, and therefore so is each $P^\sg$.
Again $\Kan(P_\bbF) \simeq \Kan(P)_\bbF$ by Lemma \ref{lem:extendKan}.
\end{proof}

\begin{remark} \label{rem:KPF} In view of Remark \ref{rem:chevalleyF} and Lemma \ref{lem:splitform}, we see:

(i)  If $P$ is a split Kantor pair of type $\Xn$ and $\bbF\in \Kalg$, then $P_\bbF$ is a split Kantor pair of type $\Xn$ over $\bbF$.

(ii) If $\bR$ is a field and $P$ is a form
of a Kantor pair type $\type{X}{n}$, then   $P_\bbF$ is a
split Kantor pair algebra of type $\type{X}{n}$ over $\bbF$ \emph{for some field $\bbF$ containing $\bR$}.
\end{remark}

It turns out that forms of split Kantor pairs of type $\type D4$, $\type E6$, $\type E7$, $\type E8$, $\type G2$ and $\type F4$
make up one of the four classes of central simple Kantor pairs that appear in the structure theorem in \cite{AS} mentioned in the introduction.

\subsection{The Jordan obstruction and the $2$-dimension of a Kantor pair}  \label{subsec:obstruct}  Let $P$ be a Kantor pair.

Note first that $\Kan(P)_{-2} \oplus \Kan(P)_0 \oplus \Kan(P)_{2}$ is a $3$-graded Lie algebra (with the grading scaled in the obvious way),  so
\[J(P) :=(\Kan(P)_{-2},\Kan(P)_{2})\]
is a Jordan pair with products
$[[x,y],z]$ calculated in $\Kan(P)$.  We call $J(P)$
the \emph{Jordan obstruction} of $P$.
We use this term because
\emph{$J(P)$ is trivial if and only if $P$ is Jordan} (see Remark \ref{rem:KP}(i)).

If $L$ is a $5$-graded Lie algebra that tightly envelops
$P$, then by Corollary \ref{cor:Kanchar},
\begin{equation} \label{eq:JPL}
J(P) \simeq (L_{-2},L_2)
\end{equation}
with the products
$[[x,y],z]$ calculated in $L$.

Suppose  that $\bR$  is a field.  If
$J(P)$ is finite dimensional, we call its dimension the $2$-\emph{dimension} of $P$.
Further, if $J(P)$ has balanced dimension $k$, we say that $P$ has
\emph{balanced $2$-dimension} $k$.  So
$2$-dimension and balanced $2$-dimension (when they are defined) can be
viewed as numerical measures of the  distance of the pair $P$ from Jordan theory.
In particular, Kantor pairs of balanced $2$-dimension $1$
can be thought of as being \emph{close to Jordan};
and they are, after Jordan pairs, the most studied and best understood Kantor pairs in the literature (see the following example).

\begin{example} \label{ex:symplectic}   Suppose  $\bR$ is a field.
A \emph{symplectic} triple system is defined as a triple system $\cT$ (with product $[x,y,z]$) together with a non-zero
skew-symmetric bilinear form $(.\mid.)$ on $\cT$ satisfying a list of axioms \cite[\S 4]{E2} \cite[\S 6.4]{EK}, \cite[\S 2]{YA}.    These structures
are a variation on Freudenthal triple systems, which have been studied by many authors (see for example \cite{M,FF,KS}) going back to the work of Freudenthal on exceptional Lie algebras.  Indeed given a symplectic triple $\cT$, the product
  $xyz = [x,y,z] - (x\vert z)y - (y\vert z) x$ endows $\cT$
with the structure of a \emph{Freudenthal triple system}, and this process can be reversed \cite[Thm.~4.7]{E2}.

Given a symplectic triple system $\cT$, one can construct a $5$-graded Lie algebra $\frak g(\cT)$ with $\dim(\frak g(\cT)_{\pm 2}) = 1$ \cite[\S 4]{E2} \cite[\S 6.4]{EK}, \cite[\S 2]{YA}.  It turns out that
the Kantor pair $\cP(\cT)$ enveloped by $\frak g(\cT)$ is the signed double of a $(1,1)$-Freudenthal-Kantor triple system (see \ref{ex:speccase}(iii))
with product $\{x,y,z\} = [x,y,z] - (x \vert y)z$.  Moreover, one easily checks that $\frak g(\cT)$ tightly envelops $\cP(\cT)$
and hence, using Corollary \ref{cor:Kanchar}, $\cP(\cT)$ is close to Jordan in the above sense.
\end{example}

\section{Short Peirce gradings and  \protect $\BCtwo$-gradings \protect}
\label{sec:SP}

\emph{In  this section, we assume that $\Delta$ is a irreducible root system of type $\BCtwo$.}  We realize $\Delta$ in the standard way
as
$\Delta = \set{ \ell_1 \ep_1 + \ell_2 \ep_2 \suchthat \ell_i\in \bbZ,\ \lvert  \ell_1 \rvert  + \lvert \ell_2 \rvert \le 2}\setminus\set{0}$, where
$\set{\ep_1,\ep_2}$ is an orthonormal basis in a 2-dimensional real Euclidean space $E_\Delta$.  (See \cite[VI.4.14]{B3}, although there is an evident typo in the last line there.)  We \emph{fix the base}
$\Gamma = \set{\al_1,\al_2}$ for $\Delta$, where
 $\al_1 = \ep_1$, $\al_2 =-\ep_1+ \ep_2$, in which case
 \begin{equation}
\label{eq:BC2}
\Delta = \{\pm \al_1, \pm \al_2, \pm(\al_1+\al_2), \pm 2\al_1, \pm(2\al_1+\al_2), \pm(2\al_1+2\al_2).\}
\end{equation}
We identify $Q_\De = \bbZ^2$  using the
$\bbZ$-basis $\Gamma$ for $Q_\De$, so that
 $(\ell_1,\ell_2) = \ell_1\al_1 + \ell_2\al_2$ for $\ell_i\in \bbZ$.
(\emph{Caution}:  We are not using the $\bbZ$-basis $\set{\ep_1,\ep_2}$ of $Q_\Delta$ for this identification.) So, as in \Subsec \ref{subsec:rootgraded},  any $\BCtwo$-graded Lie algebra
is a $\bbZ^2$-graded Lie algebra.

In this section, we introduce Kantor pairs with
short Peirce gradings and see how they are related to $\BCtwo$-graded Lie algebras.

\subsection{Short Peirce gradings}
\label{subsec:SPgradings}

\begin{definition} \label{def:SP} A \emph{short Peirce grading} (or \emph{SP-grading})
of a Kantor pair $P$ is a $\bbZ$-grading
$P =  \bigoplus_{i\in\bbZ} P_i$
of $P$ such that $\supp_\bbZ(P) \subseteq \set{0,1}$. In that case we have
$P = P_0 \oplus P_1$.  A Kantor pair $P$ together with a short Peirce grading of $P$
is called a \emph{short Peirce graded} (or \emph{SP-graded}) Kantor pair.
\end{definition}

Note that if $P$ is an SP-graded Kantor pair, then
\begin{equation}
\label{eq:ini}
\{P_i^\sg,P_{1-i}^\msg,P_i^\sg\} = 0.
\end{equation}
for $i= 0,1$ and $\sg = \pm$.

Every Kantor pair $P$ has at least two SP-gradings, the \emph{zero SP-grading}
$P = P_0$ with $P_1 = 0$, and the
\emph{one SP-grading} $P = P_1$ with $P_0 = 0$. We call these two SP-gradings \emph{trivial}.

The following example  explains our use of the term short Peirce grading.

\begin{example}[The Jordan case] \label{ex:JordSP} Suppose that $P$ is a Jordan pair.
Neher in \cite{N1}  has defined a \emph{Peirce grading} of $P$ to be a
$\bbZ$-grading  $P =  \bigoplus_{i\in\bbZ}P_i$
of $P$ such that $\supp_\bbZ(P) \subseteq \set{0,1,2}$ and
\begin{equation} \label{eq:PeirceB}
\{ P_2^\sg,  P_0^\msg,  P_0^\sg \} = \{ P_0^\sg,  P_2^\msg,  P_2^\sg \} = 0
\end{equation}
for $\sg = \pm$. (See   also \cite[\S8]{LN}), where Peirce gradings are used in the study of groups associated
with Jordan  pairs.)
Clearly SP-gradings of $P$ are precisely the same as Peirce gradings of $P$ satisfying $P_2 = 0$.
The motivating example in \cite{N1} of a Peirce grading  is the Peirce
decomposition $P = P_0 \oplus P_1 \oplus P_2$ relative to an idempotent $c$ in
$P$ \cite[Thm.~5.4]{L}. This Peirce grading is never short (if $c\ne 0$) since $c\in P_2$.
However, a very important
case in the structure theory of Jordan pairs occurs when $P_0 = 0$ \cite[Thm. 8.2]{L}.
In that case, one obtains an SP-grading
$P = \tilde P_0 \oplus \tilde P_1$, where $\tilde P_i = P_{2-i}$ for $i\in \bbZ$.
\end{example}

\begin{remark}  \label{rem:oppshift}  Let  $P$ be an SP-graded Kantor pair.  There
are two simple procedures for modifying $P$ to obtain another SP-graded Kantor pair.

(i)  Let $P^\op = (P^+,P^-)$ be the opposite pair of $P$
and set $(P^\op)_i^\sg = P_i^\msg$ for $\sg = \pm$, $i\in\bbZ$.
Then  $P^\op$ is an SP-graded Kantor pair called the \emph{opposite} of $P$ (as an SP-graded Kantor pair).

(ii)  Let $\bar P = P$ as a Kantor pair and
 define $\bar P_i = P_{1-i}$ for $i\in \bbZ$.
Then  $\bar P$ is an SP-graded Kantor pair called the \emph{shift} of $P$.
(We use this terminology since, if we view degrees modulo 2, we have shifted
the SP-grading of $P$ by $1$ to obtain~$\bar P$.)  Note that if $P$
has the zero SP-grading then $\bar P$ has the one SP-grading  and vice-versa.
\end{remark}

We will see in \Subsec  \ref{subsec:Weyl} that the SP-graded Kantor pairs
$P^\op$ and $\bar P$ are examples of   Weyl images of $P$.

\subsection{Component gradings}
\label{subsec:component}
If  $\rL=\bigoplus_{(i,j)\in \bbZ^2}\rL_{(i,j)}$ is a $\bbZ^2$-graded module, we often write
$\rL_{(i,j)}$ as $\rL_{i,j}$ for brevity.
Then the \textit{first component grading} of $\rL$ is the $\bbZ$-grading
\[\rL=\textstyle \bigoplus_{i\in \bbZ}\rL_{i,*}, \qquad
\text{ where } \rL_{i,*}=\bigoplus_{j\in \bbZ}\rL_{i,j}.\]
Similarly, we have the \textit{second component} grading $\rL=\bigoplus_{j\in \bbZ}\rL_{*,j}$
with $\rL_{*,j}=\bigoplus_{i\in \bbZ}\rL_{i,j}$.  Of course, if $\rL=\bigoplus_{(i,j)\in \bbZ^2}\rL_{i,j}$
is an algebra grading, then so are its component  gradings.

\subsection{The SP-graded Kantor pair enveloped by a \protect $\BCtwo$-graded \protect Lie algebra}
\label{subsec:BC2}
Suppose   that $\rL$ is a $\BCtwo$-graded Lie algebra.
Then, by \eqref{eq:BC2},  the first component grading
$\rL=\bigoplus_{i\in \bbZ}\rL_{i,*}$ of $\rL$ is a $5$-grading, and we have
\begin{equation*} \label{eq:BC2BC1}
\rL_{-1,*} = \rL_{-1,0} \oplus \rL_{- 1, - 1} \andd \rL_{ 1,*} = \rL_{ 1,0} \oplus \rL_{  1,  1}.
\end{equation*}
Let $P$ be the Kantor pair  enveloped by  $L$ with this  $5$-grading.
So
\[P = (\rL_{-1,*},\rL_{1,*}) =  ( \rL_{-1,0} \oplus \rL_{- 1, - 1} , \rL_{ 1,0} \oplus \rL_{  1,  1} )\]
with products $\{x^\sg,y^\msg,z^\sg\} = [[x^\sg,y^\msg],z^\sg]$.
For $i\in \bbZ$, let $P_i = (P_i^-,P_i^+)$,   where
\begin{equation}
\label{eq:grBC2a}
P_i^\sg = L_{\sg 1,\sg i}
\end{equation}
for $\sg = \pm$.   Then
$P = P_0 \oplus P_1$ is an  SP-grading of $P$ since
\[\{P_i^\sg,P_j^\msg,P_k^\sg\} = [[\rL_{\sg1,\sg i},\rL_{\msg 1,\msg j}],
\rL_{\sg 1,\sg k}] \subseteq L_{\sg 1,\sg (i-j+k)} = P_{i-j+k}^\sg\]
for $\sg = \pm$, $i,j,k\in\bbZ$. We call $P$ together with this
SP-grading the  \emph{SP-graded Kantor pair  enveloped by the $\BCtwo$-graded Lie algebra
$\rL$}, and we say that \emph{the $\BCtwo$-graded Lie algebra $L$  envelops the SP-graded Kantor pair $P$}.
If in addition \eqref{eq:tight} holds, we say that the $\BCtwo$-graded Lie algebra $L$ \emph{tightly envelops} $P$.

\begin{remark}  \label{rem:BC2vs5gr}  It  follows that every
$\BCtwo$-graded Lie algebra enveloping an SP-graded Kantor pair $P$ is also a 5-graded Lie algebra,  using the first component grading,
enveloping the pair $P$ when considered without its SP-grading. The former is tight if and only if the latter is  tight.
\end{remark}

\subsection{The standard $\BCtwo$-grading of $\Kan(P)$}
\label{subsec:KanBC2}
We now  see that any SP-graded Kantor pair is enveloped by some $\BCtwo$-graded Lie algebra.

\begin{proposition}  \label{prop:KanBC2} Suppose that $P$ is an SP-graded Kantor pair.  Then
there exists a unique $\BCtwo$-grading of $\Kan(P)$, which we call the \emph{standard $\BCtwo$-grading
of} $\Kan(P)$,
such that
\begin{equation}
\label{eq:grBC2b}
\Kan(P)_{\sg 1,\sg i} = P_i^\sg
\end{equation}
for $\sg = \pm$, $i\in\bbZ$.  Moreover,   for $\sg = \pm$,
\begin{equation} \label{eq:KPBC2}
\begin{gathered}
\Kan(P)_{\sg 2,\sg 2i} = [P^\sg_i,  P^\sg_i] \text{ for } i=0,1, \quad  \Kan(P)_{\sg 2,\sg 1} = [P^\sg_0,  P^\sg_1],\\
\textstyle \Kan(P)_{0,\sg 1} =  [P_1^\sg,P_0^\msg]
\andd \Kan(P)_{0,0} = \sum_{j=0,1} [P_j^\sg,P_j^\msg].
\end{gathered}
\end{equation}
Finally,  the first component grading of the standard $\BCtwo$-grading is the standard $5$-grading of $\Kan(P)$, and $\Kan(P)$ with the standard $\BCtwo$-grading
tightly  envelops  $P$.
\end{proposition}

\begin{proof} Let
$\Kan = \Kan(P)$ and $T = T_\Kan(P)$. Since $\Kan$
is generated by $T$ (by  Proposition \ref{prop:Kan5}),
uniqueness in the first statement is clear.
For existence, define a $\bbZ^2$-grading  of the module $T$ by setting
\[T_{i,j} =
\begin{cases}
	P_{\sg j}^\sg&\text{if } i=\sg 1 \text{ with } \sg = \pm,\\
	0&\text{otherwise}.
\end{cases}\]
Then one checks directly using \eqref{eq:LTS} that  the trilinear product
$[[x,y],z]$ on $T$ is $\bbZ^2$-graded.  Also, since the $\bbZ^2$-grading of $T$ has finite support,
it induces a natural $\bbZ^2$-grading of the Lie algebra $\End(T)$ under the  commutator product.
Further, since $\fS(P) = [T,T]$, we see that $\fS(P)$ is a $\bbZ^2$-graded
subalgebra of this Lie algebra.
Next, we give  $\Kan = \fS(P) \oplus T$ the direct sum $\bbZ^2$-grading.  It then follows that
$\Kan$ is a $\bbZ^2$-graded Lie algebra.
But, since $\Kan = [T,T]\oplus T$, we have
\begin{equation} \label{eq:suppKan}
\supp_{\bbZ^2}(\Kan) \ \subseteq  (\supp_{\bbZ^2}(T) + \supp_{\bbZ^2}(T)) \cup \supp_{\bbZ^2}(T).
\end{equation}
So, since  $\supp_{\bbZ^2}(T) \subseteq \set{\pm(1,0), \pm (1,1)}$, we have
$\supp_{\bbZ^2}(\Kan) \subseteq \De$. Thus $\Kan$ is
$\De$-graded.  Moreover, the union in \eqref{eq:suppKan} is disjoint, so
we obtain \eqref{eq:grBC2b}.
The second statement follows  immediately from this proof, and the last statement  is clear using  Proposition \ref{prop:Kan5} and Remark   \ref{rem:BC2vs5gr}.
\end{proof}

If   $P$ is an SP-graded Kantor pair, then,  unless mentioned to the contrary, \emph{we will regard  $\Kan(P)$ as $\BCtwo$-graded with its standard $\BCtwo$-grading}.

Using Remark \ref{rem:BC2vs5gr}, it is easy to write down $\BCtwo$-graded versions of many of the $5$-graded results shown in Section \ref{sec:KPs}.  We content ourselves now with recording the results of this type that we will need in this paper
or in \cite{AF2}.

\begin{proposition}  \label{prop:5grvsBC2} Let
$P$ and $P'$ be SP-graded Kantor pairs.
\begin{itemize}
\item[(i)] $P$ and $P'$ are SP-graded isomorphic if and only if $\Kan(P)$ and
$\Kan(P')$ are $\BCtwo$-graded isomorphic.
  \item[(ii)]  If $L$ is a $\BCtwo$-graded Lie algebra that tightly envelops $P$,
then there exists a unique $\BCtwo$-graded Lie algebra  isomorphism $\varphi : L \to \Kan(P)$ that restricts to the identity map on $\TLP$.
\item[(iii)] If $P\ne 0$ and $L$ is a simple $\BCtwo$-graded Lie algebra that envelops $P$, then $L$ tightly envelops $P$ and so we have the conclusion in (ii).
\end{itemize}
\end{proposition}

\begin{proof}   (i):  Since $T_{\Kan(P)}(P)$  generates the algebra $\Kan(P)$, the implication ``$\Rightarrow$" follows from Corollary \ref{cor:funcKP}.  Moreover, the reverse implication is clear.

(ii) By Remark \ref{rem:BC2vs5gr} and Corollary \ref{cor:Kanchar},
there exists a unique $5$-graded Lie algebra  isomorphism $\varphi : L \to \Kan(P)$
that restricts to the identity map on $\TLP$.  If we use $\varphi$ to transfer the $\BCtwo$-grading
from $L$ to $\Kan(P)$, we obtain a grading that must coincide with the standard $\BCtwo$-grading by uniqueness in Proposition  \ref{prop:KanBC2}.

(iii):  This follows from (ii) using  Remark \ref{rem:BC2vs5gr} and Proposition~\ref{prop:simpletight}.
\end{proof}

\section{Weyl images of SP-graded Kantor pairs}
\label{sec:Weyl}

In this section   (except in Subsection \ref{subsec:rank2}), we  continue with the assumptions of \Sec \ref{sec:SP}. In particular,
$\Delta$ is the irreducible root system of type $\BCtwo$ with base $\Gamma = \set{\al_1,\al_2}$,
where $\al_1$ is the short basic root and $\al_2$ is the long basic root.
Let $s_\al\in W_\De$ be the reflection through the hyperplane
orthogonal to $\al$ for $\al \in \De$, and put $s_i = s_{\al_i}$ for $i=1,2$.
The generators $s_1$ and $s_2$ of $W_\De$ satisfy $s_1s_2s_1s_2 = s_2s_1s_2s_1 = -1$,
and
\[\Aut(\De) = W_\De = \set{1,\ s_1,\ s_2, \ s_2s_1,\ -1,\ -s_1,\ -s_2, \ -s_2s_1}\]
is the dihedral group of order $8$.
In particular, since $\Aut(\De) = W_\De$, all images of a $\BCtwo$-graded Lie algebra
are Weyl images.  (See \Subsec \ref{subsec:Weylactionroot}.)

\subsection{Weyl images}
\label{subsec:Weyl}
\begin{definition}
\label{def:wP}
Suppose that $P$ is an SP-graded Kantor pair and  $u\in W_\De$.  Choose a
$\BCtwo$-graded Lie algebra $\rL$ that  envelops $P$.   Then $\p{u}\rL$
is a $\BCtwo$-graded Lie algebra, which therefore  envelops an SP-graded
Kantor pair $\p{u}P$ (see \Subsec \ref{subsec:BC2}).
We call $\p{u}P$ the \emph{$u$-image} (or a \emph{Weyl image}) of $P$.
\end{definition}

In parts (i)--(iii) of the next proposition, we give an internal characterization of the Weyl image $\p{u}P$.  It follows in part (iv) that
$\p{u}P$ is well-defined.

\begin{proposition}
\label{prop:compwP}
Suppose  that $P$ is an SP-graded Kantor pair, $u\in W_\De$, $\rL$ is a
$\BCtwo$-graded Lie algebra that  envelops  $P$, and $\p{u}P$ is defined as above.  Then
\begin{itemize}
\item[(i)]  $T_{\p{u}L}(\p{u}P) = T_L(P)$;  so if
$L$ tightly envelopes~$P$
    then $\p{u}L$ tightly envelopes $\p{u}P$.
\item[(ii)] $u^{-1}(\al_1) = \pi(\al_1+a\al_2)$ and $u^{-1}(\al_1+\al_2) = \rho(\al_1+b\al_2)$
for some $\pi = \pm$, $\rho = \pm$, $a,b\in \set{0,1}$, in which case
\[(\p{u}P)_0^\sg = P_a^{\pi\sg}\andd (\p{u}P)_1^\sg = P_b^{\rho\sg}.\]
\item[(iii)]  If $\sg = \pm$ and $i,j,k\in \set{0,1}$, the $\sg$-product on $\p{u}P$
restricted to $(\p{u}P)_i^\sg \times (\p{u}P)_j^\msg\times (\p{u}P)_k^\sg$ is given by
$[[x,y],z]$ in  $\rL$, which can be expressed in terms of products in $P$ using \emph{(ii)}  and \eqref{eq:LTS}.
\item[(iv)]  $\p{u}P$ does not depend on the choice of $\rL$. (This justifies our notation $\p{u}P$.)
%\item[(v)]  $\Kan(\p{u}P)$  with its standard $\BCtwo$-grading
%is graded isomorphic to the $u$-image $\p{u}\Kan(P)$ of $\Kan(P)$ with %its standard $\BCtwo$-grading.
%So $\Kan(\p{u}P)$ is isomorphic to $\Kan(P)$ (as algebras).
\end{itemize}
\end{proposition}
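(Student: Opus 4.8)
The plan rests on one geometric observation: among the roots of $\De$, those of first-component degree $\pm 1$ are exactly $\pm\al_1$ and $\pm(\al_1+\al_2)$, and these are precisely the \emph{short} roots of $\De$ (their doubles $\pm 2\al_1$ and $\pm(2\al_1+2\al_2)$ being long roots). Since any $u\in W_\De$ preserves root lengths, $u^{-1}$ permutes the short roots among themselves. To prove (i), I would unwind the definition $T_{\p{u}L}(\p{u}P) = (\p{u}L)_{-1,*}\oplus(\p{u}L)_{1,*}$: the piece $(\p{u}L)_{\sg 1,*}$ collects those root spaces $\rL_\beta$ for which $u\beta$ has first component $\sg 1$, i.e. for which $u\beta$ is short, i.e. (as $u$ stabilizes the short roots) for which $\beta$ is short. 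Running over $\sg=\pm$ this recovers the sum of all four short-root spaces, which is exactly $\TLP$.

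For (ii), since $\al_1$ and $\al_1+\al_2$ are short, their images $u^{-1}(\al_1)$ and $u^{-1}(\al_1+\al_2)$ lie in the short-root set $\set{\pm\al_1,\pm(\al_1+\al_2)}$, forcing the stated shapes $u^{-1}(\al_1)=\pi(\al_1+a\al_2)$ and $u^{-1}(\al_1+\al_2)=\rho(\al_1+b\al_2)$ with $\pi,\rho=\pm$ and $a,b\in\set{0,1}$ depending only on $u$. Matching degrees in $(\p{u}P)_i^\sg = (\p{u}L)_{\sg 1,\sg i} = \rL_{\sg u^{-1}(\al_1+i\al_2)}$ against $P_j^\tau = \rL_{\tau(\al_1+j\al_2)}$ then gives $(\p{u}P)_0^\sg = P_a^{\pi\sg}$ and $(\p{u}P)_1^\sg = P_b^{\rho\sg}$. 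Part (iii) is then a direct rewriting: the $\sg$-product on $\p{u}P$ is by construction $[[x,y],z]$ computed in $\p{u}L = \rL$, and after identifying the graded pieces via (ii) each such bracket is expressed through the $D$- and $K$-operators of $P$ by \eqref{eq:LTS}. Part (iv) follows at once, because (ii) describes the underlying graded modules of $\p{u}P$ purely in terms of $P$ and $u$, while (iii) describes its products purely in terms of the intrinsic Kantor-pair structure of $P$; neither description mentions $\rL$, so $\p{u}P$ is independent of the chosen envelope.

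For (v) I would specialize the construction to $\rL = \Kan(P)$ with its standard $\BCtwo$-grading (Proposition \ref{prop:KanBC2}), making $\p{u}\Kan(P)$ a $\BCtwo$-graded Lie algebra that envelops $\p{u}P$. The heart of the matter is to check that $\p{u}\Kan(P)$ in fact \emph{tightly} envelops $\p{u}P$. This is where (i) does the work: writing $T = \TLP$, we have $\p{u}\Kan(P) = \Kan(P)$ as an ungraded Lie algebra and, by (i), $T_{\p{u}\Kan(P)}(\p{u}P) = T$ as a submodule; hence the generated subalgebra $\langle T\rangle_{\text{alg}}$, the centre $Z(\cdot)$, and the space $[T,T]$ are all left unchanged by the regrading, so the tightness conditions \eqref{eq:tight} for $\p{u}\Kan(P)$ coincide with those for $\Kan(P)$, which hold by Proposition \ref{prop:KanBC2}. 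With tightness in hand, the $\BCtwo$-graded analogue of Corollary \ref{cor:Kanchar} (Remark \ref{rem:SPversion}) yields a unique $\BCtwo$-graded isomorphism $\p{u}\Kan(P)\to\Kan(\p{u}P)$ restricting to the identity on $T$; and since $\p{u}\Kan(P)$ and $\Kan(P)$ agree as ungraded algebras, this isomorphism also witnesses $\Kan(\p{u}P)\cong\Kan(P)$. The only real obstacle is this tightness verification, and it reduces cleanly to (i) together with the evident invariance of $Z(\cdot)$ and $\langle T\rangle_{\text{alg}}$ under a change of grading; the remaining parts are bookkeeping once the short-root picture is fixed.
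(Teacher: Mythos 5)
Your proposal is correct and follows essentially the same route as the paper: both rest on the observation that $S^-\cup S^+=\set{\pm\al_1,\pm(\al_1+\al_2)}$ is the set of short roots of $\De$ and hence is stabilized by $W_\De$, which gives (i) and (ii) by the same degree-matching computation, with (iii) and (iv) as immediate consequences, and (v) obtained by noting that $\p{u}\Kan(P)$ tightly envelops $\p{u}P$ (via (i)) and invoking the $\BCtwo$-graded version of Corollary \ref{cor:Kanchar}. Your explicit verification that tightness transfers under regrading (since $\langle \TLP\rangle_{\text{alg}}$, $Z(\cdot)$ and $[\TLP,\TLP]$ are grading-independent) merely spells out what the paper compresses into ``So by (i)''.
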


\begin{proof} First
$T_L(P) = \sum_{\al \in S^-\cup S^+} L_\al$, where $S^\sg = \set{\sg\al_1, \sg(\al_1+\al_2)}$,
and similarly $T_{\p{u}L}(\p{u}P) = \sum_{\al \in S^-\cup S^+} (\p{u}L)_\al  =
\sum_{\al \in u^{-1}(S^-\cup S^+)} L_\al$.   But $S^-\cup S^+$ is the set of short roots of $\De$ and hence this set is stabilized by $W$.  So we have (i) and  the expressions
for $u^{-1}(\al_1)$ and $u^{-1}(\al_1+\al_2)$ in (ii).
Further $(\p{u}P)_0^\sg = (\p{u} \rL)_{\sg 1,0} = (\p{u} \rL)_{\sg \al_1} = \rL_{u^{-1}(\sg \al_1)}
= \rL_{\pi\sg(\al_1+a\al_2) } = \rL_{\pi\sg 1, \pi\sg a} =  P_a^{\pi\sg}$, and similarly
$(\p{u}P)_1^\sg = P_b^{\rho\sg}$, so we have (ii).  Next (iii) follows from (ii); and (iv) follows from (ii) and (iii).
\end{proof}

\begin{corollary}
\label{cor:compwP}  If  $P$ is an SP-graded Kantor pair and $u\in W_\Delta$, then
$\Kan(\p{u}P)$  with its standard $\BCtwo$-grading
is graded isomorphic to the $u$-image $\p{u}\Kan(P)$ of $\Kan(P)$ with its standard $\BCtwo$-grading.
So $\Kan(\p{u}P)$ is isomorphic to $\Kan(P)$ (as algebras).
\end{corollary}
\begin{proof} By Proposition \ref{prop:KanBC2}, $\Kan(P)$ with its standard $\BCtwo$-grading tightly  envelops $P$.  So
by Proposition \ref{prop:compwP}(i), the $\BCtwo$-graded Lie algebra $\p{u}\Kan(P)$ tightly  envelops~$\p{u}P$, which completes the proof by Proposition \ref{prop:5grvsBC2}(ii).
\end{proof}

If $P$ is an SP-graded Kantor pair, we see using Proposition \ref{prop:compwP}(iv) and \eqref{eq:actrootgraded} that
\begin{equation}\label{eq:actSPKP}
\p1P = P \andd \p{u_1}(\p{u_2} P) = \p{u_1u_2} P
\end{equation}
for $u_1,u_2\in W_\De$. In other words, we have a left action of $W_\De$ on the class of SP-graded Kantor pairs.

\begin{proposition} \label{prop:simplereflect}  Let $P$ be an SP-graded Kantor pair
and $u\in W_\De$.  Then, $P$ is simple (resp.~central simple) if and only if
$\p{u}P$ is simple (resp.~central simple).  Also
$P$ is a  split Kantor pair  of type $\type Xn$ (resp.~a form of a
split Kantor pair  of type $\type Xn$) if and only if the same is
true for $\p{u}P$.
\end{proposition}

\begin{proof}  In view of Corollary \ref{cor:compwP}, the first statement follows
from Proposition \ref{prop:simpleKan} and Corollary \ref{cor:centroid}, while the second statement
follows from  Lemma \ref{lem:splitform}.
\end{proof}

If $P$  is an SP-graded Kantor pair, two of the Weyl images of $P$ (besides $\p1P$)
are already familiar to us.  Indeed, using the notation of  Remark \ref{rem:oppshift}, we have
\begin{equation} \label{eq:oppshift}
\p{-1}P = P^\op \andd \p{s_2}P = \bar P.
\end{equation}
The first of these follows immediately from (ii) and (iii) in Proposition \ref{prop:compwP},
as does the second, since $s_2$ exchanges $\al_1$ and $\al_1+\al_2$.

\subsection{Reflection}
\label{subsec:basic}
If $P$ is an SP-graded Kantor pair, the Weyl image   $\p{s_1}P$ of $P$
is of particular interest in the theory.  For convenience we introduce the following notation:
\begin{equation}  \label{eq:brevedef}
\bP := \p{s_1}P.
\end{equation}
Since $s_1$ is the reflection in $W_\De$ corresponding to the short basic root $\al_1$,
we call $\bP$ the \emph{reflection of $P$ corresponding to the short basic root},
or more simply the \emph{reflection} of~$P$.  We have the following explicit description of $\bP$:

\begin{proposition} \label{prop:reflect}
Suppose that $P$ is an SP-graded Kantor pair.  Then
$\bP_0 = (P_0)^\op$ and $\bP_1 = P_1$ as Kantor pairs; and the $\sg$-product
$\{\ ,\ ,\  \}\brv$
on $\bP$ is given by
\begin{multline*}
\{a_0^\msg+a_1^\sg,b_0^\sg+b_1^\msg,c_0^\msg+c_1^\sg\}\brv =
\{a_0^\msg,b_0^\sg,c_0^\msg\}
-\{b_1^\msg,a_1^\sg,c_0^\msg\}
+K(a_0^\msg,b_1^\msg)c_1^\sg \\
+\{a_1^\sg,b_1^\msg,c_1^\sg\}
-\{b_0^\sg,a_0^\msg,c_1^\sg\}
+ K(a_1^\sg,b_0^\sg)c_0^\msg,
\end{multline*}
where  $a_i^{\tau}, b_i^{\tau}, c_i^{\tau}\in P_i^{\tau}$ in each case.
\end{proposition}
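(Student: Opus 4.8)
The plan is to derive both assertions directly from Proposition~\ref{prop:compwP} applied to $u = s_1$, so the first step is to compute the action of $s_1$ on the two short basic roots. Since $s_1 = s_{\al_1}$ is an involution we have $u^{-1} = s_1$, and a direct computation in $\De$ gives $s_1(\al_1) = -\al_1$ together with $s_1(\al_1+\al_2) = \al_1+\al_2$ (the reflection negates $\al_1$ and fixes the orthogonal short root $\al_1+\al_2$, as $\langle \al_1+\al_2,\al_1^\vee\rangle = 0$ in type $\BCtwo$). In the notation of Proposition~\ref{prop:compwP}(ii) this reads $u^{-1}(\al_1) = \pi(\al_1+a\al_2)$ with $\pi = -$ and $a = 0$, and $u^{-1}(\al_1+\al_2) = \rho(\al_1+b\al_2)$ with $\rho = +$ and $b = 1$.

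Substituting these values into the formulas of Proposition~\ref{prop:compwP}(ii) yields $\bP_0^\sg = P_a^{\pi\sg} = P_0^\msg$ and $\bP_1^\sg = P_b^{\rho\sg} = P_1^\sg$ for $\sg = \pm$. The first equality gives $\bP_0 = (\bP_0^-,\bP_0^+) = (P_0^+,P_0^-) = (P_0)^\op$ and the second gives $\bP_1 = P_1$, which is the first assertion; this is obtained by the same reading-off from part~(ii) that produced \eqref{eq:oppshift}, and reflects the fact that $s_1$ reverses the sign on the degree-$0$ part while leaving the degree-$1$ part fixed.

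For the product formula I would fix a $\BCtwo$-graded Lie algebra enveloping $P$ — by Proposition~\ref{prop:compwP}(iv) the choice is immaterial, so take $L = \Kan(P)$ with its standard $\BCtwo$-grading (Proposition~\ref{prop:KanBC2}). By Proposition~\ref{prop:compwP}(i),(iii), since $T_{\p{u}L}(\p{u}P) = T_L(P)$, the reflected product $\{\ ,\ ,\ \}\brv$ is nothing but the Lie triple product $[[x,y],z]$ computed in $L$. The organizing observation is that the superscripts in the statement record the \emph{original} sign of each component: from $P_i^\tau = L_{\tau 1,\tau i}$ one reads off that $a_0^\msg, b_1^\msg, c_0^\msg$ lie in $L_{\msg1,*}$ while $a_1^\sg, b_0^\sg, c_1^\sg$ lie in $L_{\sg1,*}$. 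Expanding $[[\,a_0^\msg+a_1^\sg,\ b_0^\sg+b_1^\msg\,],\ c_0^\msg+c_1^\sg]$ by trilinearity then produces eight triple brackets whose entry-sign patterns are now determined; applying \eqref{eq:LTS} to each, the two brackets $[[a_0^\msg,b_1^\msg],c_0^\msg]$ and $[[a_1^\sg,b_0^\sg],c_1^\sg]$ vanish (all three entries share a sign), and the remaining six reproduce, in order, the six summands displayed in the proposition.

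The main obstacle will be the sign bookkeeping in this last expansion: because $\pi = -$, the degree-$0$ components $a_0,b_0,c_0$ carry signs that do not line up with the $\bP$-signs of the arguments they belong to, so one must feed the correct original sign into each of the four cases of \eqref{eq:LTS} in order to see that the $-\{\ \}$ summands and the $K(\ )$ summands land exactly where claimed. Once every component's sign is pinned down as above, the remainder is a routine evaluation via \eqref{eq:LTS} together with collection of terms.
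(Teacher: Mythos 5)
Your proposal is correct and follows essentially the same route as the paper: the paper's proof also observes that $s_1(\al_1) = -\al_1$ and $s_1(\al_1+\al_2) = \al_1+\al_2$ and then invokes parts (ii) and (iii) of Proposition \ref{prop:compwP} together with \eqref{eq:LTS}. Your write-up simply carries out in detail the eight-term expansion and sign bookkeeping that the paper leaves to the reader, and every term (including the two vanishing brackets) lands correctly.
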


\begin{proof}  Since $s_1(\al_1) = -\al_1$ and $s_1 (\al_1+\al_2) = \al_1+\al_2$,
the conclusions follow easily using parts
(ii) and (iii) of  Proposition \ref{prop:compwP} and \eqref{eq:LTS}.
\end{proof}

\begin{corollary}  \label{cor:reflecttriv} Suppose $P$ is an SP-graded pair.
If the SP-grading on $P$ is the one SP-grading,
then  $\bP = P$  as SP-graded Kantor pairs.   On the other hand, if
the SP-grading on $P$ is the zero SP-grading,
then  $\bP = P^\op$ as SP-graded Kantor~pairs.
\end{corollary}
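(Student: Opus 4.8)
The plan is to compute $\bP$ directly from the explicit product formula in Proposition \ref{prop:reflect} after specializing the SP-grading to each of the two trivial cases, and to verify that the resulting SP-graded Kantor pair coincides with the claimed one. The key observation is that in each trivial case one of the homogeneous components $P_0$ or $P_1$ vanishes, so the six-term product formula collapses to a single surviving term, and the identification becomes immediate. So first I would recall from Remark \ref{rem:oppshift} and Proposition \ref{prop:reflect} that $\bP_0 = (P_0)^\op$ and $\bP_1 = P_1$ as Kantor pairs, and keep the full product formula at hand.

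For the one SP-grading, I would set $P_0 = 0$, so that $a_0^\tau = b_0^\tau = c_0^\tau = 0$ for all $\tau$. Substituting into the displayed formula of Proposition \ref{prop:reflect}, every term involving a degree-$0$ argument vanishes, leaving only $\{a_1^\sg, b_1^\msg, c_1^\sg\}\brv = \{a_1^\sg, b_1^\msg, c_1^\sg\}$. Since also $\bP_1 = P_1 = P$ and $\bP_0 = (P_0)^\op = 0$, the SP-graded Kantor pair $\bP$ has the same underlying modules, the same products, and the same (one) SP-grading as $P$; hence $\bP = P$ as SP-graded Kantor pairs.

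For the zero SP-grading, I would instead set $P_1 = 0$, so that $a_1^\tau = b_1^\tau = c_1^\tau = 0$. Now the surviving term is $\{a_0^\msg, b_0^\sg, c_0^\msg\}\brv = \{a_0^\msg, b_0^\sg, c_0^\msg\}^\sg$, which is precisely the $\sg$-product of the opposite pair $P^\op$ applied to the corresponding elements (recall that the $\sg$-product on $P^\op$ is the $\msg$-product of $P$). Combined with $\bP_0 = (P_0)^\op = P^\op$ and $\bP_1 = P_1 = 0$, and the fact that the zero SP-grading of $P^\op$ is exactly the one concentrated in degree $0$, this identifies $\bP$ with $P^\op$ as SP-graded Kantor pairs.

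I do not expect any genuine obstacle here; this is a routine specialization of an already-derived formula. The only point requiring a little care is bookkeeping of which terms vanish and matching the superscripts $\sg, \msg$ correctly against the definition of the opposite pair in \Subsec \ref{subsec:trilinear}, together with confirming that the trivial SP-gradings transport correctly under $P \mapsto P^\op$. Everything else follows mechanically from Proposition \ref{prop:reflect} and \eqref{eq:oppshift}.
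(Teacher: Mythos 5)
Your proof is correct and takes essentially the same route as the paper: the corollary is stated there without proof as an immediate consequence of Proposition \ref{prop:reflect}, and your specialization of the six-term product formula to the two trivial SP-gradings (together with the check that the trivial gradings transport correctly under $P \mapsto P^\op$) is exactly the intended argument.
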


Suppose that $P$ is an SP-graded Kantor pair. By \eqref{eq:actSPKP}, \eqref{eq:oppshift} and
\eqref{eq:brevedef},
 the eight elements $1,\ s_1, s_2,  s_2s_1, -1, -s_1, -s_2,  -s_2s_1$ of $W_\De$
yield in order the following eight   SP-graded Weyl images of $P$:
\begin{equation}
\label{eq:Weylimages}
P,\ \ \bP,\ \ \bar P,\ \ \bar \bP,\ P^\op,\ \ (\bP)^\op,\ \ (\bar P)^\op,\ \ (\bar \bP)^\op.
\end{equation}
Since shifting does not change the underlying  ungraded Kantor pair,  our list includes
in general four non-isomorphic Kantor pairs:
\[P,\ \ \bP,\ \  P^\op,\ \ (\bP)^\op.\]
This  suggests the central role that reflection plays in our study of Weyl images.

\subsection{A geometric interpretation of $\bP$}
\label{subsec:geometric}
First (using the notation at the beginning of Section \ref{sec:SP})
$\set{\ep_1,\ep_2}$ and $\set{\al_1,\al_2}$ are
$\bbZ$-bases of $Q_\Delta$, with $\al_1 = \ep_1$ and
$\al_2 = -\ep_1 + \ep_2$. To avoid conflict with
our ongoing notation $(\ell_1, \ell_2) = \ell_1 \al_1 + \ell_2 \al_2$, we set
$\langle \ell_1, \ell_2\rangle = \ell_1 \ep_1 + \ell_2 \ep_2$ for $\ell_1.\ell_2\in \bbZ$.  Note that
$(k_1,k_2) = \langle k_1 - k_2, k_2\rangle$ and  $s_1(\langle \ell_1, \ell_2\rangle) = \langle -\ell_1, \ell_2\rangle$.
In the figures below, we will use the coordinates $\langle \ell_1,\ell_2 \rangle$ to plots points in $Q_\Delta$

Suppose now that $L$ is a $\BCtwo$-graded Lie algebra enveloping a SP-graded Kantor pair $P$. Then the first component grading of $L$ is
$\rL=\bigoplus_{i\in \bbZ}\rL_{i,*}$, where
\begin{equation}
\label{eq:BCtwoEuclid}
\rL_{i,*}= \textstyle \bigoplus_{\ell_2\in \bbZ}\rL_{i,\ell_2} = \bigoplus_{\ell_2\in \bbZ}\rL_{\langle i-\ell_2,\ell_2 \rangle}
= \bigoplus_{\ell_1,\ell_2\in \bbZ,\ \ell_1 + \ell_2 = i }  \rL_{\langle \ell_1,\ell_2 \rangle}.
\end{equation}
Thus, the first component grading for $L$ is pictured by the dashed lines in Figure \ref{fig:one} below. (More precisely the support of $L_{i,*}$ in $Q_\Delta$ is contained in the set of points labelled by  circles (filled or unfilled) on the dashed line labelled as $i$.)  Hence the Kantor pair  $P$ is pictured by the dashed lines labelled as  $-1$ and $1$ in Figure \ref{fig:one}. (In this case we have filled the support circles for emphasis.)  In fact,
using \eqref{eq:grBC2a}, we see that
\[P_0^\sg = L_{\langle \sg 1, 0\rangle} \andd P_1^\sg = L_{\langle 0, \sg 1\rangle},\]
so the SP-grading of $P$ is pictured as well.

%%%%%%%Figures for reflection
\setlength{\unitlength}{20pt}
\newcommand\edot{\circle{.18}}  %Empty dot
\newcommand\fdot{\circle*{.18}}  %Filled dot
\begin{minipage}{.49\textwidth}
\centering
\begin{picture}(5,6)(-2.5,-2.5)
%****Grading***********
%%%%L_2
\put(0,2){\edot}
\put(1,1){\edot}
\put(2,0){\edot}
\dashline{.2}(-.5,2.5)(2.5,-.5)
%L_1
\put(0,1){\fdot}
\put(1,0){\fdot}
\dashline{.2}(-1,2)(2,-1)
%%%%L_0
\put(-1,1){\edot}
\put(0,0){\edot}
\put(1,-1){\edot}
\dashline{.2}(-1.5,1.5)(1.5,-1.5)
%%%%L_{-1}
\put(-1,0){\fdot}
\put(0,-1){\fdot}
\dashline{.2}(-2,1)(1,-2)
%%%%L_{-2}
\put(-2,0){\edot}
\put(-1,-1){\edot}
\put(0,-2){\edot}
\dashline{.2}(-2.5,.5)(.5,-2.5)
%Draw degrees
\put(.55,-2.8){$-2$}
\put(1.05,-2.3){$-1$}
\put(1.55,-1.8){$0$}
\put(2.05,-1.3){$1$}
\put(2.55,-.8){$2$}
%%%%%Draw vertical axis
\put(0,-2.5){\vector(0,1){5.3}}
%%%%%Draw horizontal axis
\put(-2.5,0){\vector(1, 0){5.3}}
%Draw axis labels
\put(2.9,-.3){$\ell_1$}
\put(.1,2.7){$\ell_2$}
\end{picture}
\captionof{figure}{$L$ and $P$}
\label{fig:one}
\end{minipage}%
%%%%%%%%Reflected figure%%%%%%%%%
%%%%%%%%%%%%%%%%%%%%%%%%%%%%%%%%%
\begin{minipage}{.45\textwidth}
\centering
\begin{picture}(5,6)(-2.5,-2.5)
%****Grading***********
%%%%L_2
\put(0,2){\edot}
\put(1,1){\edot}
\put(2,0){\edot}
\dashline{.2}(.5,2.5)(-2.5,-.5) %For \breve L_{2}
%L_1
\put(0,1){\fdot}
\put(1,0){\fdot}
\dashline{.2}(1,2)(-2,-1) %For \breve L_{1}
%%%%L_0
\put(-1,1){\edot}
\put(0,0){\edot}
\put(1,-1){\edot}
\dashline{.2}(1.5,1.5)(-1.5,-1.5) %For \breve L_{0}
%%%%L_{-1}
\put(-1,0){\fdot}
\put(0,-1){\fdot}
\dashline{.2}(2,1)(-1,-2) %For \breve L_{-1}
%%%%L_{-2}
\put(-2,0){\edot}
\put(-1,-1){\edot}
\put(0,-2){\edot}
\dashline{.2}(2.5,.5)(-.5,-2.5) %For \breve L_{-2}
%Draw degrees
\put(.7,2.4){$2$}
\put(1.2,1.9){$1$}
\put(1.7,1.4){$0$}
\put(2.2,.9){$-1$}
\put(2.7,.4){$-2$}
%%%%%Draw vertical axis
\put(0,-2.5){\vector(0,1){5.3}}
%%%%%Draw horizontal axis
\put(-2.5,0){\vector(1, 0){5.3}}
%Draw axis labels
\put(2.9,-.3){$\ell_1$}
\put(.1,2.7){$\ell_2$}
\end{picture}
\captionof{figure}{$\breve L$ and $\bP$}
\label{fig:two}
\end{minipage}

On the other hand, setting  $\breve L = \p{s_1}L$, the
first component grading of $\breve L$ is
$\breve \rL=\bigoplus_{i\in \bbZ}\breve \rL_{i,*}$, where, using
\eqref{eq:BCtwoEuclid} applied to $\breve L$, we have
\[\breve \rL_{i,*}=  \textstyle
\bigoplus_{\ell_1,\ell_2\in \bbZ,\ \ell_1 + \ell_2 = i }  \breve \rL_{\langle \ell_1,\ell_2 \rangle}  =
\bigoplus_{\ell_1,\ell_2\in \bbZ,\ -\ell_1 + \ell_2 = i }
 \rL_{\langle \ell_1,\ell_2 \rangle} .\]
Thus the first component grading for $\breve L$ is pictured by the dashed lines in Figure \ref{fig:two}.
Next, by definition
$\bP$ is enveloped by $\breve L$,
so the Kantor pair  $\breve P$ is pictured by the dashed  lines labelled as  $-1$ and $1$ in Figure 2.  Also
we have
\[\breve P_0^\sg = L_{\langle -\sg 1,\ 0\rangle} \andd
\breve P_1^\sg = L_{\langle0, \sg 1\rangle},\]
so the SP-grading of $\breve P$ is pictured as well.

Evidently, Figure \ref{fig:two} is obtained from Figure  \ref{fig:one} by orthogonal reflection through the vertical   axis.

\subsection{The Jordan obstruction of a Weyl image} \label{subsec:balWeyl}

Suppose  $P$ is an SP-graded Kantor pair.

Recall from Subsection \ref{subsec:obstruct} that the  Jordan obstruction $J(P)$ of $P$ is the Jordan pair
$(\Kan(P)_{-2,*},\Kan(P)_{2,*})$.  So
\begin{equation*}
\label{eq:JPK}
J(P) = (\Kan(P)_{-2,0} \oplus \Kan(P)_{-2,-1} \oplus \Kan(P)_{-2,-2},
\Kan(P)_{2,0} \oplus \Kan(P)_{2,1} \oplus \Kan(P)_{2,2}).
\end{equation*}
It is interesting to note that
$J(P) = \oplus_{i=0}^2 J(P)_i$ is Peirce
graded (see subsection \ref{subsec:obstruct}) with
$J(P)^\sg_i = \Kan(P)_{\sg 2, \sg i}$, since
$\Kan(P)_{0,\sg 2} = 0$.
However, we will view $J(P)$ as an ungraded Jordan pair.

If $L$ is a $\BCtwo$-graded Lie algebra that tightly envelops $P$
then
\begin{equation} \label{eq:JPLBC}
J(P) \simeq (L_{-2,*},L_{2,*}) =
(L_{-2,0} \oplus L_{-2,-1} \oplus L_{-2,-2},
L_{2,0} \oplus L_{2,1} \oplus L_{2,2})
\end{equation}
 by Proposition \ref{prop:5grvsBC2}(ii).

In view of \eqref{eq:Weylimages}, the following proposition tells us how to compute the Jordan obstruction of any Weyl image of $P$.

\begin{proposition}  \label{prop:obstruct} If $P$ is an SP-graded Kantor pair, and
 $L$ is a $\BCtwo$-graded Lie algebra that tightly envelops $P$, then
$J(\bar P) \simeq J(P)$, $J(P^\op) \simeq J(P)^\op$ and  \begin{equation}
\label{eq:JPbrv} J(\bP) \simeq ( L_{ 2,0} \oplus L_{0,-1} \oplus   L_{-2,-2},
 L_{-2,0} \oplus L_{0,1} \oplus   L_{2,2})
 \end{equation}
 under the products $[[x,y],x]$ calculated in $L$.
\end{proposition}

\begin{proof}
If $u\in W$, the $\BCtwo$-graded Lie algebra $\p{u}L$ tightly envelopes
$\p{u}P$, by Proposition \ref{prop:compwP}(i).  Thus, by \eqref{eq:JPLBC},  $J(\p{u}P) \simeq ((\p{u}L)_{-2,*},(\p{u}L)_{2,*})$.
If $u = s_2$, $u = -1$ or $u= s_1$, then $u^{-1} = u$ maps
$\set{2 \al_1,2\al_1+\al_2,2\al_1+2\al_2}$ onto
$\set{2 \al_1,2\al_1+\al_2,2\al_1+2\al_2}$,
$\set{-2 \al_1,-2\al_1-\al_2,-2\al_1-2\al_2}$
and $\set{-2 \al_1,\al_2,2\al_1+2\al_2}$ respectively.
\end{proof}

If follows from Proposition \ref{prop:obstruct} (or from the definitions) that if $P$ is Jordan then so are  $\bar P$ and $P^\op$.    We now see that this fails for the reflection $\bP$ of~$P$.

\begin{lemma} \label{lem:Jordan}
$\bP$ is a Jordan pair if and only if
$P_0$ and $P_1$ are left ideals of $P$ that are Jordan pairs.
Suppose further that $P$ is Jordan.
Then $\bP$ is  Jordan  if and only if $P_0$ and $P_1$ are ideals of $P$, in which case
$P = P_0 \oplus P_1$ and $\breve P = (P_0)^\op \oplus P_1$ are direct sums of ideals.
\end{lemma}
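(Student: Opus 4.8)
The plan is to establish both iff-statements using the explicit product formula for $\bP$ in Proposition \ref{prop:reflect}, together with the defining condition that a Jordan pair is a Kantor pair with $K(P^\sg,P^\sg)=0$ (Special Cases \ref{ex:speccase}(i)). The key observation is that $\bP$ being Jordan means $K\brv(P^\sg,P^\sg)=0$, so first I would compute the $K$-operators of $\bP$ from the product in Proposition \ref{prop:reflect}. Writing the $\sg$-product as a sum of six terms indexed by the Peirce components, the antisymmetrization $\{x,y,z\}\brv - \{z,y,x\}\brv$ that defines $K\brv$ should split along the grading into pieces living in $\bP_0^\sg=(P_0)^\op$ and $\bP_1^\sg=P_1$. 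I expect the vanishing of $K\brv$ to decouple into two independent conditions: one forcing $P_0$ and $P_1$ to be Jordan pairs (this is where the ``diagonal'' $\{a_i,b_i,c_i\}$ and $K$ terms within a single component contribute), and one forcing certain cross-terms to vanish, which I anticipate will translate precisely into the left-ideal condition $\{P^\sg,P^\msg,P_i^\sg\}\subseteq P_i^\sg$.

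First I would treat the forward direction of the first statement. Assuming $\bP$ is Jordan, I would extract from $K\brv=0$ the purely-graded-degree-preserving components to deduce $K^{P_0}=K^{P_1}=0$ (so $P_0,P_1$ are Jordan), and then use the mixed-degree components to force the terms $\{b_1,a_1,c_0\}$, $\{b_0,a_0,c_1\}$, $K(a_0,b_1)c_1$, $K(a_1,b_0)c_0$ to behave so that each $P_i$ absorbs products from the left. The cleanest route is probably to note that $P_i$ being a left ideal of $P$ is equivalent, given the SP-grading relation \eqref{eq:ini}, to the product $\{P^\sg,P^\msg,P_i^\sg\}$ landing in $P_i^\sg$; I would read this off by specializing the inputs to lie in single Peirce components and comparing degrees. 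Conversely, if $P_0,P_1$ are Jordan left ideals, I would substitute back into the $K\brv$ formula and check term-by-term that every contribution cancels, using $K(P_i^\sg,P_i^\sg)=0$ and the left-ideal inclusions; this direction is a direct verification.

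For the second statement, the standing hypothesis is that $P$ itself is Jordan, so $K(P^\sg,P^\sg)=0$ throughout. Then the $K$-terms in the product formula of Proposition \ref{prop:reflect} vanish, and the criterion from the first statement simplifies: $P_0,P_1$ are automatically Jordan, so $\bP$ is Jordan iff $P_0,P_1$ are left ideals. The task is to upgrade ``left ideal'' to ``ideal'' under the Jordan hypothesis. Here I would use that in a Jordan pair the ideal condition is more symmetric: since $K=0$, the operator $K(x^\sg,z^\sg)=0$ already kills the ``outer'' term $\{z,y,x\}=\{x,y,z\}$ symmetry obstruction, and I expect the SP-grading identity \eqref{eq:ini} to promote a left ideal that is also a sub-Peirce-component into a genuine ideal. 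Concretely, left-ideal plus the Jordan symmetry $\{P^\sg,P_i^\msg,P^\sg\}\subseteq P_i^\sg$ should follow, giving the full ideal condition, and then $P=P_0\oplus P_1$ is a direct sum of ideals with the claimed reflection $\breve P=(P_0)^\op\oplus P_1$.

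The main obstacle I anticipate is the bookkeeping in separating $K\brv=0$ into the four independent Peirce-graded conditions and correctly identifying which cross-terms encode the left-ideal property versus the Jordan property of the components; the degree-counting via \eqref{eq:ini} is the crucial tool that prevents these from interfering. A secondary subtlety is the final upgrade from left ideal to two-sided ideal in the Jordan case, where I must be careful that the direct-sum decomposition into ideals genuinely holds rather than merely the left-ideal decomposition — but since $P_0$ and $P_1$ together span $P$ and each is shown to be an ideal, the direct-sum-of-ideals conclusion, and hence the explicit form of $\breve P$, follows formally.
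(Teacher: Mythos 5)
Your proposal is correct and follows essentially the same route as the paper: both reduce Jordan-ness of $\bP$ to the vanishing of its $K$-operators, split that condition along the Peirce grading into Jordan-ness of $P_0$ and $P_1$ plus vanishing of cross terms (which \eqref{eq:ini} converts into the left-ideal condition), and then use the outer symmetry $\{x,y,z\}=\{z,y,x\}$ available when $P$ is Jordan to upgrade left ideals to ideals and obtain the direct-sum decompositions. The only cosmetic difference is that the paper computes $\breve K(\bP^\sg,\bP^\sg)\bP^\msg$ inside an enveloping $\BCtwo$-graded Lie algebra via \eqref{eq:LTS}, which produces the four relevant spans in one line, whereas you antisymmetrize the explicit product formula of Proposition \ref{prop:reflect} --- an equivalent, if slightly longer, bookkeeping exercise.
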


\begin{proof}  Now
$\bP$ is Jordan if and only $J(\bP) = 0$, and,  by \eqref{eq:JPbrv} (with $L = \Kan(P)$) and  \eqref{eq:KPBC2}, this holds if and only if
$[P^\sg_i,  P^\sg_i] = 0$  and $[P_1^\sg,P_0^\msg] = 0$ in $\Kan(P)$ for $\sg = \pm$ and $i = 0,1$.
Moreover, this holds if and only if
$K(P_i^\sg, P_i^\sg)P^\msg = 0$ and
$\{P_i^\sg,P_{1-i}^\msg,P^\sg\} = 0$ for $\sg = \pm$, $i = 0,1$.
Then, using \eqref{eq:ini}, we see that $\bP$ is Jordan if and only if each
$P_i$ is Jordan and $\{P_{1-i}^\sg,P_i^\msg,P_i^\sg\} = 0$ for $\sg = \pm$, $i = 0,1$.   But
one checks easily using \eqref{eq:ini}  that this last condition holds  if and only if
$P_i$ is a left ideal of $P$ for $i=0,1$.

Suppose next that $P$ is Jordan.  If $\bP$ is Jordan, then we know
that $P_i$ is a left and right ideal of $P$ for $i=0,1$; and hence, by  \eqref{eq:ini},
$P_i$ is an ideal for $i=0,1$.  Conversely, suppose that each $P_i$ is an ideal of $P$.
Then $P = P_0 \oplus P_1$ is a direct sum of ideals; so by Proposition
\ref{prop:reflect}, $\breve P = (P_0)^\op \oplus P_1$ is a direct
sum of ideals and hence $\bP$ is Jordan.
\end{proof}

\begin{corollary}  \label{cor:notJordan}
Suppose that $P$ is a simple Jordan pair with non-trivial SP-grading.  Then $\bP$ is not Jordan.
\end{corollary}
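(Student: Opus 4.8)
The plan is to argue by contradiction, feeding the hypothesis that $P$ is Jordan into the second half of Lemma \ref{lem:Jordan}. So I would suppose that $\bP$ is Jordan. Since $P$ is assumed Jordan, Lemma \ref{lem:Jordan} then asserts that $P_0$ and $P_1$ are ideals of $P$ and that $P = P_0 \oplus P_1$ is a direct sum of ideals. This is the one substantive input, and it has already been established for us by the lemma, so there is nothing to recompute here.

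Next I would bring in simplicity. By the definition of a simple trilinear pair in \Subsec \ref{subsec:trilinear}, the only ideals of $P$ are $\{0\}$ and $P$ itself. Because the SP-grading is non-trivial, neither $P_0$ nor $P_1$ is the zero subpair: a vanishing $P_1$ would be the zero SP-grading and a vanishing $P_0$ the one SP-grading, both of which are excluded by hypothesis. Hence $P_0$ is a nonzero ideal, and it is proper, since $P_1 \ne 0$ forces $P_0 \ne P$ in the direct sum $P = P_0 \oplus P_1$. This contradicts the simplicity of $P$, so $\bP$ cannot be Jordan.

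The argument has essentially no obstacle beyond bookkeeping: the only point requiring care is matching the phrase ``non-trivial SP-grading'' to the genuine non-vanishing of both $P_0$ and $P_1$, which is precisely what rules out the two trivial gradings and exhibits $P_0$ as a nonzero proper ideal. All of the real content has been absorbed into Lemma \ref{lem:Jordan}, and the corollary is then immediate from simplicity.
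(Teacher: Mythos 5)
Your proof is correct and is exactly the argument the paper intends: the corollary is stated immediately after Lemma \ref{lem:Jordan} with no separate proof, precisely because it follows by feeding ``$P$ Jordan and $\bP$ Jordan'' into the second half of that lemma to exhibit $P_0$ as a nonzero proper ideal, contradicting simplicity. Your care in translating ``non-trivial SP-grading'' into $P_0 \ne 0$ and $P_1 \ne 0$ is the right (and only) bookkeeping point.
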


There are many well-understood examples of simple Jordan pairs with
non-trivial SP-gradings  (for example simple Jordan pairs with a maximal non-invertible idempotent).
In this way,  reflection produces many examples of  simple SP-graded Kantor pairs that are not Jordan.
We  consider one such example in Subsection \ref{subsec:KPskew}.

In the  same spirit, we will give an example in
Subsection \ref{subsec:Wg3} of an SP-graded Kantor pair  of balanced $2$-dimension
$1$ whose reflection has balanced $2$-dimension $5$.  So reflection produces a Kantor pair that is far from Jordan starting from a pair that is close to Jordan.

\subsection{Remarks on Weyl images using other rank 2 root systems}\label{subsec:rank2}

Up   to this point,  we have discussed how $\BCtwo$-graded Lie algebras
arise in the theory of SP-graded Kantor pairs.
With this in mind, it is natural to wonder how Lie algebras graded by other irreducible rank 2 root systems fit into this picture.
We discuss this briefly in this subsection (with few details), and explain why we have focused on the $\BCtwo$-case.

Suppose that $\Delta$ is an irreducible root system with base $\Gamma = \set{\al_1,\al_2}$, and denote the type of $\Delta$ by
$\type X2$.
Let $m_1\al_1 + m_2\al_2$ be the highest root of $\Delta$ and suppose that $m_1 = 1$ or $2$; and let $\al_1 + n_2\al_2$ be the highest
root in $(\al_1 + \bbZ \al_2)\cap \Delta$.  Then  we have the following possible cases:
\begin{equation*}
\begin{aligned}
(1)\ &\type X2 = \type A2, m_1 = 1, n_2 = 1;
&(2)\ &\type X2 = \type B2, \al_1 \textrm{ long}, m_1 = 1, n_2 = 2; \\
(3)\  &\type X2 = \type B2, \al_1 \textrm{ short}, m_1 = 2, n_2 = 1;
&(4)\ &\type X2 = \type G2, \al_1 \textrm{ long}, m_1 = 2, n_2 = 3; \\
(5)\  &\type X2 = \type{BC}2, \al_1 \textrm{ long}, m_1 = 2, n_2 = 2;
&(6)\  &\type X2 = \type{BC}2, \al_1 \textrm{ short}, m_1 = 2, n_2 = 1.
\end{aligned}
\end{equation*}

Suppose   that $\rL$ is a $\Delta$-graded Lie algebra.
Then, since $m_1 \le 2$,  the first component grading
$\rL=\bigoplus_{i\in \bbZ}\rL_{i,*}$ of $\rL$ is a $5$-grading.
Let $P = (\rL_{-1,*},\rL_{1,*})$ be the Kantor pair  enveloped by  $L$ with this  $5$-grading.  Then
$P =  \oplus_{j\in \bbZ} P_j = \oplus_{j=0}^{n_2}P_j$, where
$P_j^\sg = L_{\sg 1,\sg j}$,  is a $\bbZ$-graded Kantor pair which we say is
\emph{enveloped by the $\Delta$-graded Lie algebra $L$}.  Further in each case  we have
$[P^\sg_i,P^\msg_j] = 0$ in $L$ if $(i-j)\al_2 \notin \Delta\cup\set0$ and
$[P^\sg_i,P^\sg_j] = 0$ in $L$ if $2\al_1 + (i+j)\al_2 \notin \Delta\cup\set0$;
and these facts translate into relations in $P$ which we call \emph{Peirce} relations.  For example the Peirce relations in Case 2 are the
relations \eqref{eq:PeirceB} in Example \ref{ex:JordSP} as well as  the relations $K(P_i^\sg,P_j^\sg) = 0$ for all $i,j$.
In Case 6, the set of relations is empty.

If we are in Case $\ell$, where $1\le \ell \le 6$, we are led to introduce
the class $\mathfrak C(\ell)$ of $\bbZ$-graded Kantor pairs that by definition
have support contained in $\set{0,\dots,n_2}$,
and satisfy the Peirce relations mentioned above.

For example $\mathfrak C(1)$, $\mathfrak C(2)$ and $\mathfrak C(3)$ are respectively
the class of SP-graded Jordan pairs, the class of Peirce graded Jordan pairs (see Example \ref{ex:JordSP}), and
the class of SP-graded Kantor pairs with $P_0$ and $P_1$ Jordan.
We leave it to the interested reader to work out  $\mathfrak C(4)$ and $\mathfrak C(5)$.
Of course  $\mathfrak C(6)$ is the class of SP-graded Kantor pairs that we have been studying.

So in each case, any $\Delta$-graded Lie algebra envelops a $\bbZ$-graded
Kantor pair in  $\mathfrak C(\ell)$; and conversely one sees exactly as in
Proposition \ref{prop:KanBC2} that any $\bbZ$-graded Kantor pair in
$\mathfrak C(\ell)$  is enveloped by a $\Delta$-graded Lie algebra.

If $P$ is in $\mathfrak C(\ell)$  and $u \in W_\Delta$, one can define
the Weyl image $\p{u}P$ in $\mathfrak C(\ell)$ as in
Definition \ref{def:wP}.  However,  to see that $\p{u}P$
is well-defined as in Proposition \ref{prop:compwP},
one needs the fact that $S^-\cup S^+$ is
invariant under $W_\Delta$,
where $S^\sg : = \set{\sg\al_1 + \sg j \al_2}_{j=1}^{n_2}$.
Since this is not true in general, it is natural to consider
Weyl images $\p{u}P$ only for $u$ in the stabilizer $V$ of $S^- \cup S^+$ in $W_\Delta$.

Now because of our work in this article we understand Weyl images in $\mathfrak C(6)$; and we
therefore also understand them in $\mathfrak C(3)$, since $\mathfrak C(3)$ is a subclass
of $\mathfrak C(6)$ that is closed under Weyl images in $\mathfrak C(6)$.
In the remaining four Cases 1, 2, 4 and 5, it turns out that we
have $u(S^+) = S^+$ or $S^-$ for $u\in V$ and hence
$\p{u}P \simeq P$ or $P^\op$ \emph{as ungraded Kantor pairs}
for $P\in\mathfrak C(\ell)$ and $u\in V$. So in those four cases,
Weyl images cannot be used to produce new ungraded Kantor pairs.
For this reason, we have only considered Case 6 in this paper.

Nevertheless, it seems that all of the classes $\mathfrak C(\ell)$, as well as analogous classes
for higher rank root systems, are of interest in their own right
and may play a role in the development of a theory of
grids for Kantor pairs following the lead of Neher in the Jordan case
(see \cite{N2} and its  references).

\section{Kantor pairs of skew transformations}
\label{sec:skew}

In this section,
\emph{let $\Vm$ and $\Vp$ be  modules and
let  $g:\Vm\times \Vp\to \bR$ be a non-degenerate bilinear form}.
If  $v^+\in \Vp$ and $v^-\in \Vm$, we
set $g(v^+,v^-)=g(v^-,v^+)$ for convenience.

\subsection{$3$-graded Lie algebras of skew transformations}
\label{subsec:Lskew}
Let $\tV = \Vm \oplus \Vp$, and define a nondegenerate symmetric
bilinear form $\tg : \tV \times \tV \to \bR$~by
\[\tg(v^- + v^+, w^- + w^+) = g(v^-,w^+) +  g(v^+,w^-).\]

For $\sg,\tau = \pm$, set $\End(\tV)^{\sg,\tau} = \set{A\in \End(\tV) \suchthat AV^{-\tau} = 0,\ AV^\tau \subseteq \Vs}$
and identify this module with $\Hom(V^\tau,\Vs)$ in the evident fashion.
Then we have
$\End(\tV) = \bigoplus_{\sg,\tau = \pm} \End(\tV)^{\sg,\tau}$ with
$ \End(\tV)^{\kappa,\lambda}\End(\tV)^{\sg,\tau} \subseteq \delta_{\lambda,\sg} \End(\tV)^{\kappa,\tau}$.
Hence, the associative algebra $\End(\tV) = \bigoplus_{i\in \bbZ} \End(\tV)_i$
 is $\bbZ$-graded with
\[\End(\tV)_{\sg 1} = \End(\tV)^{\sg,-\sg}, \quad
\End(\tV)_{0} = \End(\tV)^{-,-} \oplus \End(\tV)^{+,+}\]
and $\End(\tV)_i = 0$ otherwise.
So  $\End(\tV) = \bigoplus_{i\in \bbZ} \End(\tV)_i$  is a $3$-graded Lie algebra under the commutator product.

Let
\[\fo(\tg) = \set{A \in \End(\tV) \suchthat \tg(Av,w) + \tg (v,Aw) = 0 \text{ for } v,w\in \tV}.\]
Then $\fo(\tg)$ is a graded subalgebra of the Lie algebra $\End(\tV)$.
Thus $\fo(\tg) = \bigoplus_{i\in \bbZ}\fo(\fg)_i$ is a $3$-graded  Lie algebra  with $\fo(\tg)_i = \fo(\tg) \cap \End(\tV)_i$
for $i\in \bbZ$.  We call $\fo(\tg)$ the \emph{orthogonal Lie algebra of $\tg$}, or sometimes the
\emph{Lie algebra of skew transformations of $\tg$}, .

For $v,w\in \tV$, define $\zeta(v,w) \in \End(\tV)$ by
\[\zeta(v,w) x = \tilde g(x,w)v - \tilde g(x,v)w,\]
in which case $\zeta(v,w) = - \zeta(w,v)$.  Also $\zeta(v,w)\in \fo(\tg)$ and
\begin{equation} \label{eq:Azeta}
[A,\zeta(v,w)] = \zeta(Av,w) + \zeta(v,Aw) \quad\text{for } A\in \fo(\tg).
\end{equation}
Thus
\[\ffo(\tg) := \zeta(\tV,\tV) = \spann_\bR\set{\zeta(v,w) \suchthat v,w\in \tV}\]
is an ideal of $\fo(\tg)$ which is graded
since $\zeta(V^\sg,V^\sg) \subseteq \End(\tV)_{\sg 1}$
and $\zeta(V^+,V^-) \subseteq \End(\tV)_0$.  So
$\ffo(\tg) =  \bigoplus_{i\in \bbZ}\ffo(\fg)_i$ is a $3$-graded Lie algebra with
\[\ffo(\tg)_{\sg 1} = \zeta(V^\sg,V^\sg) \andd
\ffo(\tg)_{0} = \zeta(V^+,V^-).
 \]

If $\bR$ is a field, it is well known (and easy to check) that $\ffo(\tg)$ is the Lie algebra of all finite rank homomorphisms in $\fo(\tg)$.

 \begin{lemma} \label{lem:extendfo} Let $\bbF \in \Kalg$.  Suppose that
 either $\bbF$ is a projective-$\bR$-module, or else
 $\bbF$ is flat and each $V^\sg$ is a finitely generated $\bR$-module.
 Then
$g_\bbF : V^-_\bbF \times V^+_\bbF \to \bbF$ is nondegenerate, and
the canonical homomorphism $\End(\tV)_\bbF \to \End(\tV_\bbF)$ restricts to
a $3$-graded  $\bbF$-algebra  isomorphism  from $\ffo(\tg)_\bbF$ onto
$\ffo(\widetilde{g_\bbF})$.  (Here we can identify
$\ffo(\tg)_\bbF$ as an $\bbF$-submodule of $\End(\tV)_\bbF$ since
$\bbF$ is flat.)
\end{lemma}

\begin{proof}  As in Lemma \ref{lem:extendKan}, we know from our assumptions that the canonical $\bbF$-module homomorphisms $\Hom(V^\sg,\bR)_\bbF \to \Hom(V^\sg_\bbF,\bbF)$
and $\End(\tV)_\bbF \to \End_\bbF(\tV_\bbF)$ are injective.
Using this we can easily verify both statements.
\end{proof}
If    $\set{v_i^\sg}_{i\in I}$ is a basis for the module $W^\sg$
for $\sg = \pm$, we say that
 the bases $\set{v_i^-}_{i\in I}$ and $\set{v_i^+}_{i\in I}$ are
\emph{dual with respect to $g$} if
 $g(v_i^-,v_j^+) = \delta_{ij}$ for all $i,j\in I$.  Of course if $W^-$ or $W^+$ is not free, such dual bases cannot exist.
In fact even if $\bR$ is a field, such dual bases need not exist
\cite[\S IV.5]{J2}.
However, one can  say:

\begin{lemma}  \label{lem:fd}\ Let $\bR$ be a field.
\begin{itemize}
\item[(i)]
Suppose $W^\sg$ is a finite dimensional subspace of $\Vs$ for $\sg = \pm$ such that
$g |_{W^-\times W^+}$ is nondegenerate.  Then there exist dual bases
for $W^-$ and $W^+$ relative to $g|_{W^-\times W^+}$.
Moreover  $\Vs = W^\sg \oplus (W^\msg)^\perp$ for $\sg = \pm$,
where $(W^\msg)^\perp = \set{v^\sg \in \Vs \suchthat g(v^\sg,W^\msg) = 0}$.
\item[(ii)] Suppose that $X^\sg$ is a finite subset of $\Vs$ for
$\sg = \pm$.  Then there exists a finite dimensional subspace $W^\sg$ of $\Vs$
containing $X^\sg$ for $\sg = \pm$ such that $g |_{W^-\times W^+}$ is nondegenerate.
\end{itemize}
\end{lemma}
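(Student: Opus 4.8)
For part (i), I would argue as follows. The nondegeneracy of $g|_{W^-\times W^+}$ means the induced map $W^-\to (W^+)^*$, $v^-\mapsto g(v^-,\cdot)$, is injective; since $\dim(W^-)=\dim(W^+)<\infty$ (equal dimensions forced by nondegeneracy of the pairing on finite-dimensional spaces), this map is an isomorphism. Hence $g|_{W^-\times W^+}$ is a perfect pairing of finite-dimensional spaces, and standard linear algebra produces dual bases $\set{v_i^-}$ and $\set{v_i^+}$ with $g(v_i^-,v_j^+)=\delta_{ij}$: start with any basis of $W^-$, take the dual basis of $(W^-)^*\cong W^+$. For the decomposition $\Vs = W^\sg\oplus(W^\msg)^\perp$, I would exhibit the projection explicitly. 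Given $v^\sg\in\Vs$, set $w^\sg=\sum_i g(v^\sg,v_i^\msg)v_i^\sg\in W^\sg$; then $g(w^\sg,v_j^\msg)=g(v^\sg,v_j^\msg)$ for all $j$, so $v^\sg-w^\sg\in(W^\msg)^\perp$. This gives $\Vs=W^\sg+(W^\msg)^\perp$, and the sum is direct because if $w^\sg\in W^\sg\cap(W^\msg)^\perp$ then $g(w^\sg,v_j^\msg)=0$ for all $j$, forcing $w^\sg=0$ by nondegeneracy of $g|_{W^-\times W^+}$.

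For part (ii), the idea is to enlarge $X^-$ and $X^+$ until the restricted form becomes nondegenerate. Let $U^\sg=\spann_\bbF(X^\sg)$, a finite-dimensional subspace. The obstruction is the radical of $g|_{U^-\times U^+}$ on each side, i.e.\ the vectors in $U^\sg$ orthogonal to all of $U^\msg$. The plan is to kill these radicals by adjoining finitely many vectors from the ambient nondegenerate pairing. Concretely, pick a basis of the radical of $U^-$ inside $U^-$; for each such vector $r$, nondegeneracy of $g$ on $\Vm\times\Vp$ supplies some $v^+\in\Vp$ with $g(r,v^+)\ne0$, and I adjoin these finitely many $v^+$ to $U^+$; symmetrically for the radical on the $+$ side. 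After finitely many such adjunctions one reaches finite-dimensional $W^\sg\supseteq X^\sg$ whose pairing has no radical, hence is nondegenerate.

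\textbf{The main obstacle} is making the enlargement in (ii) actually terminate with a genuinely nondegenerate pairing, since adjoining vectors to $W^+$ to kill the radical of $W^-$ may create new radical vectors on the $+$ side (and vice versa). The clean way to avoid an infinite regress is to choose the new vectors more carefully: rather than killing radicals one at a time, I would take $U^\sg=\spann(X^\sg)$, let $\set{r_k^\sg}$ be a basis for the radical of $g|_{U^-\times U^+}$ on the $\sg$ side, and use nondegeneracy of the full form $g$ to select, simultaneously, vectors $\set{s_k^\msg}$ with $g(r_k^\sg,s_\ell^\msg)=\delta_{k\ell}$ and $g(r_k^\sg,U^\msg)$ already zero, so the new pairing on $W^\sg=U^\sg\oplus\spann(s_k^\sg)$ restricted to the old radicals is perfect and the old nondegenerate part is undisturbed; a short bookkeeping argument then shows the enlarged pairing is nondegenerate. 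This controlled choice is the only delicate point; everything else is routine finite-dimensional linear algebra.
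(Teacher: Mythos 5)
Your proof is correct, and for part (ii) it takes a genuinely different route from the paper. The paper's own proof is essentially a pointer: (i) is dismissed as well-known elementary linear algebra (your argument for it is the standard one, and fine), and (ii) is obtained either as a special case of \cite[Prop.~3.18]{LB} or by an induction on $\dim(\spann_\bbF(X^-))+\dim(\spann_\bbF(X^+))$ that uses (i); no details are given. You instead give a direct, one-shot enlargement: write $U^\sg=\spann_\bbF(X^\sg)=N^\sg\oplus R^\sg$ with $R^\sg$ the radical of $g|_{U^-\times U^+}$ on the $\sg$ side, and adjoin to $U^\msg$ vectors dual to a basis $\set{r_k^\sg}$ of $R^\sg$. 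This works, but two points in your sketch deserve to be said explicitly. First, the existence of $s_\ell^\msg$ with $g(r_k^\sg,s_\ell^\msg)=\delta_{k\ell}$ uses that the map $\Vms\to (R^\sg)^*$ induced by $g$ is surjective: if its image were a proper subspace, some nonzero $r$ in the finite dimensional space $R^\sg$ would satisfy $g(r,\Vms)=0$, contradicting nondegeneracy of $g$. Second, the ``short bookkeeping argument'' is a triangular elimination whose order matters: if $n^-+r^-+s^-\in W^-$ (components in $N^-$, $R^-$, $S^-=\spann_\bbF\set{s_k^-}$) is orthogonal to all of $W^+$, then pairing against $R^+$ kills $s^-$ (the uncontrolled products $g(N^-,S^+)$ and $g(S^-,S^+)$ do not intervene here, since $g(U^-,R^+)=0$); pairing against $N^+$ then kills $n^-$ because $g|_{N^-\times N^+}$ is nondegenerate; and pairing against $S^+$ finally kills $r^-$. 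The same argument with $+$ and $-$ exchanged finishes nondegeneracy, and the sums $U^\sg\oplus S^\sg$ are direct because every element of $U^\sg$ pairs to zero with $R^\msg$. As for what each approach buys: the paper's citation/induction is shorter on the page and recycles (i), while your construction is self-contained, avoids both the external reference and the induction, and makes visible exactly which cross-pairings must be controlled (only those against the radicals)---at the cost of the verification just described.
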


\begin{proof}  (i) is a well-known fact from elementary linear algebra.
(ii) is a special case of
\cite[Prop.~3.18]{LB}, or it can be checked, using (i), by induction
on  $\dim(\spann_\bR\set{X^-}) + \dim(\spann_\bR\set{X^+})$.
\end{proof}

\begin{proposition} \label{prop:orthosimple}
If $\bR$  is a field and $\dim(\Vs) \ge 3$, then $\ffo(\tg)$ is central simple.
\end{proposition}

\begin{proof}  Simplicity is well known.  One way to show
it is to reduce to the finite dimensional case using
Lemma \ref{lem:fd}, in which case simplicity is a classical fact which is easily checked.
Thus, by Lemma \ref{lem:extendfo},
$\ffo(\tg)_\bbF$ is an simple $\bbF$-algebra for each field $\bbF$ containing $\bR$.
So $\ffo(\tg)$ is central simple by \cite[Thm.~II.1.6.3(2)]{Mc}.
\end{proof}

\begin{remark} \label{rem:converseortho}
If
there exist dual bases for $V^-$ and $V^+$ relative to $g$, then the converse
of  Proposition \ref{prop:orthosimple} is true \cite[\S 3.8]{GN}.
The same remark applies to the corresponding Jordan and Kantor pair results
below.  (See Proposition \ref{prop:JPskew}(iii) and the first statement in Theorem \ref{thm:reflectskew}(iii).)
\end{remark}

\subsection{Jordan pairs of skew transformations}
\label{subsec:JPskew}

Recall that $H = (H^-,H^+)$, where
$H^\sg = \Hom(V^\msg,V^\sg)$,
is a Jordan pair under the products
$\{A^\sg,B^\msg ,C^\sg \} =A^\sg B^\msg C^\sg + C^\sg B^\msg A^\sg$.
Indeed, $H$ is the Jordan pair determined by the $3$-graded Lie
algebra $\End(\tV)$ discussed in Subsection \ref{subsec:Lskew}.

\begin{proposition} \label{prop:JPskew} \
\begin{itemize}
\item[(i)]
Let $\Sk(g) = (\Sk(g)^-,\Sk(g)^+)$, where
\begin{align*}
\Sk(g)^\sg &=  \set{ A^\sg \in H^\sg \suchthat g(A^\sg v^\msg ,w^\msg )+ g(v^\msg ,A^\sg w^\msg )=0}
\end{align*}
for $\sg = \pm$. Then $\Sk(g)$ is a subpair of
$H$, and $\Sk(g)$
is enveloped by the $3$-graded Lie algebra
$\fo(\tg)$.
\item[(ii)] Let $\FSk(g) = (\FSk(g)^-,\FSk(g)^+)$, where
\[\FSk(g)^\sg  = \zeta(\Vs,\Vs).\]
Then $\FSk(g)$ is an ideal of $\Sk(g)$, and
$\FSk(g)$ is enveloped by the $3$-graded Lie algebra
$\ffo(\tg)$.
\item[(iii)]
If $\bR$ is a field  and $\dim(\Vs) \ge 2$, the Jordan pair $\FSk(g)$ is central simple.
\end{itemize}
 \end{proposition}

\begin{proof}  (i) and (ii) follow from the discussion in
Subsection \ref{subsec:Lskew}. For (iii),
suppose $\bR$ is a field  and $\dim(\Vs) \ge 2$.
If $\dim(\Vs) = 2$, $\FSk(g)$ is one-dimensional with nontrivial  products,
so it is central simple.  If $\dim(\Vs) \ge 3$, then $\FSk(g)$ is central simple by  Theorem \ref{thm:LdetP}(iii) and Proposition  \ref{prop:orthosimple}.
\end{proof}

\begin{remark}\label{rem:LB}
The Jordan pairs $\Sk(g)$ and
$\FSk(g)$ are special cases of Jordan pairs studied in \cite{LB} and \cite{Z1}. Part (iii) in the proposition
 is a special case of
\cite[Thm.~3.9 and Prop.~4.1(3)]{LB} or \cite[Lemma 5(2)]{Z1}.
\end{remark}

\subsection{Kantor pairs of skew transformations}
\label{subsec:KPskew}
\emph{Assume now  that $e = (e^-,e^+)\in \Vm\times \Vp$ satisfies $g(e^-,e^+)=1$.}    We will use $e$ to construct
a $\BCtwo$-grading of $\ffo(\tg)$ and hence an SP-grading of
$\FSk(g)$.

Let $U^\sg = (e^\msg)^\perp$ in $\Vs$ relative to $g$, in which case we have
\[\Vs=\bR e^\sg\oplus U^\sg.\]

\begin{proposition} \label{prop:BC2skew}
$\ffo(\tg) = \bigoplus_{(i_1,i_2)\in \bbZ^2} \ffo(\tg)_{i_1,i_2}$
is a
$\BCtwo$-grading of the Lie  algebra $\ffo(\tg)$, where
\begin{equation*}
\begin{gathered}
\ffo(\tg)_{\sg 1,0} = \zeta(U^\sg,U^\sg), \quad \ffo(\tg)_{\sg 1,\sg 1} = \zeta(e^\sg,U^\sg),\\
  \ffo(\tg)_{0,0} = \zeta(U^-,U^+) + \bR \zeta(e^\msg,e^\sg), \quad \ffo(\tg)_{0,\sg 1} = \zeta(e^\sg,U^\msg)
\end{gathered}
\end{equation*}
for $\sg = \pm$, and $\ffo(\tg)_{i_1,i_2} = 0$ for all other $(i_1,i_2)$ in $\bbZ^2$.
Moreover, the first component grading of this grading is the $3$-grading of $\ffo(\tg)$ in Subsection \ref{subsec:Lskew}.
\end{proposition}

\begin{proof}
Note that it suffices to show that
$\ffo(\tg) = \bigoplus_{(i_1,i_2)\in \bbZ^2} \ffo(\tg)_{i_1,i_2}$
is a $\bbZ^2$-grading, as the rest is then clear.
There
are a number of ways to see this including a direct case-by-case check.  We give an argument that we can use again in Section~\ref{sec:KPE6}.

Suppose first that there exists $t\in \bR$ such that
\begin{equation} \label{eq:condt}
 (t^i-t^j)x = 0,\ x \in \ffo(\tg) \implies x = 0 \text{ or } i = j.
\end{equation}
For $\sg = \pm$, we define $\theta^\sg \in \GL(V^\sg)$ by
$\theta^\sg (e^\sg) = t^{\sg 1} e^\sg$ and
$\theta^\sg|_{U^\sg} = \id_{U^\sg}$.
Then $g(\theta^\sg x^\sg, \theta^\msg x^\msg) =  g(x^\sg, x^\msg)$,
so $\ttheta := \theta^-\oplus \theta^+ \in \GL(\tV)$ preserves the form $\tg$.  Hence
\begin{equation} \label{eq:thetazeta}
\ttheta \zeta (x,y) \ttheta^{-1} = \zeta(\ttheta x, \ttheta y)
\end{equation}
for $x,y\in \tV$, so we can define $\psi_\theta \in \Aut(\ffo(\tg))$ by
\[\psi_\theta(X) = \ttheta X \ttheta^{-1}\]
For $(i_1,i_2)\in\bbZ^2$, let
\[\cF_{i_1,i_2}=\{x\in\ffo(\tg)_{i_1}:
\psi_{\theta}(x)=t^{i_2} x\}.\]
(We will see that~$\cF_{i_1,i_2} = \ffo(\tg)_{i_1,i_2}$.)

Note  that the sum $\sum_{(i_1,i_2)\in \bbZ^2} \cF _{i_1,i_2}$ in $\ffo(\tg)$ is direct.
Indeed, to show this it suffices to show that $\sum_{i_2\in \bbZ} \cF _{i_1,i_2}$ is direct for $i_1\in \bbZ$, and this
is checked by a standard argument in linear algebra using \eqref{eq:condt}.
Note also that $\ffo(\tg)_{i_1,i_2} \subseteq \cF_{i_1,i_2}$ for each $(i_1,i_2)$, which follows from
\eqref{eq:thetazeta} and the definition of the modules $\cF_{i_1,i_2}$.

Since $\sum_{(i_1, i_2)\in \bbZ^2} \ffo(\tg)_{i_1,i_2} = \ffo(\tg)$
we see from the preceding paragraph that $\ffo(\tg)=\bigoplus_{(i_1,i_2)\in\bbZ^2}~\ffo(\tg)
_{i_1,i_2}$ as modules and that $\ffo(\tg)_{i_1,i_2} = \cF_{i_1,i_2}$ for each $(i_1,i_2)$.
Thus, since $\psi_{\theta}$ is an automorphism,
$\ffo(\tg)=\bigoplus_{(i_1,i_2)\in\bbZ^2}~\ffo(\tg)
_{i_1,i_2}$ is an algebra grading.

Finally, consider the general case (without assuming the existence of $t$ satisfying \eqref{eq:condt}).  Let $\bbT= \bR[t,t^{-1}]$ be the algebra of Laurent polynomials.  Then, by Lemma \ref{lem:extendfo}, we have a canonical $3$-graded $\bbT$-algebra isomorphism $\ph : \ffo(\tg)_\bbT \to \ffo(\tg_\bbT)$.  So $\ffo(\widetilde{g_\bbT})$ satisfies
\eqref{eq:condt} (over $\bbT$), and hence
$\ffo(\widetilde{g_\bbT}) = \bigoplus_{(i_1,i_2)\in \bbZ^2} \ffo(\widetilde{g_\bbT})_{i_1,i_2}$ is a $\bbZ^2$-grading.
Moreover, since $\bbT$ is flat, we
can identify  $(\ffo(\tg)_{i_1,i_2})_\bbT$ as a $\bbT$-submodule of
$\ffo(\tg)_\bbT$  and we see that
$\ph$ maps $(\ffo(\tg)_{i_1,i_2})_\bbT$ onto
$\ffo(\widetilde{g_\bbT})_{i_1,i_2}$ for each $(i_1,i_2)$.
So $\ffo(\tg)_\bbT = \bigoplus_{(i_1,i_2)\in \bbZ^2} (\ffo(\tg)_{i_1,i_2})_\bbT$ is a $\bbZ^2$-grading.
Since $\bbT$ is faithfully flat, this easily implies our result.
\end{proof}

\tolerance=900
\begin{corollary}  \label{cor:SPskew}
Let $\FSk(g)$ be the Kantor pair in Proposition \ref{prop:JPskew}(ii) and let
\begin{equation}
\label{eq:SPFskew}
\FSk(g)_0^\sg = \zeta(U^\sg,U^\sg) \andd \FSk(g)_1^\sg =\zeta(e^\sg,U^\sg).
\end{equation}
Then $\FSk(g) = \FSk(g)_0\oplus \FSk(g)_1$ is an SP-graded Jordan pair, which is enveloped
by the $\BCtwo$-graded Lie algebra in Proposition \ref{prop:BC2skew}.
\end{corollary}

\begin{remark}  In a similar fashion   one obtains a $\BCtwo$-grading of $\fo(\tg)$ and hence an
SP-grading  of $\Sk(g)$ with
$\Sk(g)_0^\sg =\{A^\sg \in \Sk(g)^\sg: A^\sg  e^\msg =0\}$
and
$\Sk(g)_1^\sg =\{A^\sg  \in \Sk(g)^\sg: A^\sg U^\msg\subseteq \bR e^\sg\}$.
We leave the details of this to the interested reader.
\end{remark}

\begin{theorem} \label{thm:reflectskew} Suppose $\bR$ is a unital commutative ring containing $\frac 16$,
$g : \Vm \times \Vp \to \bR$ is nondegenerate bilinear form,
 and
$e = (e^-,e^+)\in \Vm\times \Vp$ satisfies $g(e^-,e^+)=1$.  Let $\FSk(g)$ be the SP-graded Jordan pair  in  Corollary \ref{cor:SPskew}.
Then
\begin{itemize}
\item[(i)] $\FSk(g)\brv$ is an SP-graded Kantor pair.
\item[(ii)] The Jordan obstruction $J(\FSk(g)\brv)$ of $\FSk(g)\brv$ is isomorphic to $(U'/U'')^\op$, where $U = (U^-,U^+)$ is the Jordan pair with products
\[\{u^\sg,v^\msg,w^\sg\}^\sg = g(u^\sg,v^\msg) w^\sg + g(w^\sg,v^\msg) u^\sg,\]
$U'$ is the ideal of $U$ with $(U')^\sg =
\zeta(U^\sg,U^\sg)U^\msg$, and $U''$ is the ideal of $U'$ with $(U'')^\sg = \set{u^\sg \in (U')^\sg \suchthat
g(u^\sg,(U')^\msg) = 0}$.
\item[(iii)] If $\bR$ is a field and $\dim(\Vm) \ge 2$,  then $\FSk(g)\brv$ is a central simple SP-graded Kantor pair.  Moreover,     if $\bR$ is a field and $\dim(\Vm) \ge 3$, then $J(\FSk(g)\brv) \simeq U^\op$, so $\FSk(g)\brv$  is not Jordan.
    \end{itemize}
\end{theorem}

\begin{proof} (i) is immediate from Corollary \ref{cor:SPskew}.

(ii):  Let $P = \FSk(g)$ with the SP-grading in Corollary \ref{cor:SPskew} and let $L = \ffo(\tg)$ with the $\BCtwo$-grading
in Proposition \ref{prop:BC2skew}.
Let $L' = \langle \TLP \rangle_\text{alg}$,
$L'' = Z(L')\cap [T_{L'}(P),T_{L'}(P)]$ and
$\overline{L'} = L'/L''$.  Then $L'$ is a $\BCtwo$-graded ideal of $L$ and
$L''$ is a $\BCtwo$-graded ideal of $L'$, so $\Lbp$ is $\BCtwo$-graded.
Moreover, by Remarks \ref{rem:tight} and \ref{rem:BC2vs5gr}, $\overline{L'}$
tightly envelopes the SP-graded
Kantor pair $P$ (suitably identified in $\overline{L'}$).
So by  \eqref{eq:JPbrv} and   Proposition \ref{prop:BC2skew},  $J(\bP) \simeq (\Lbp_{0,-1},\Lbp_{0,1})$ under the products $[[x,y],z]$ in $\Lbp$.  Thus $J(\bP)^\op \simeq (\Lbp_{0,1},\Lbp_{0,-1}) \simeq (L'_{0,1},L'_{0,-1})/(L''_{0,1},L''_{0,-1})$.

We now calculate $(L'_{0,1},L'_{0,-1})$ and
$(L''_{0,1},L''_{0,-1})$ leaving some of the details to the reader.
First $L_{0,\sg 1} = \zeta(U^\sg,e^\msg)$, and we define
$\lm^\sg : U^\sg \to L_{0,\sg 1}$ by
$\lm^\sg(u^\sg) = \sg \zeta(u^\sg,e^\msg)$.
One checks that
$\lm = (\lm^-,\lm^+) : U \to (L_{0,1},L_{0,-1})$
is an isomorphism of Jordan pairs.
Next we have $L'_{0,-\sg 1} = [L_{\sg 1,0}, L_{-\sg 1,-\sg 1}] = [\zeta(U^\sg,U^\sg),\zeta(U^\msg,e^\msg) ] =
\zeta(\zeta(U^\sg,U^\sg)U^\msg,e^\msg)$.  So $\lm$ maps
$U'$ onto $(L'_{0,1},L'_{0,-1})$.
Finally $L''_{0,-\sg 1}$ is the centralizer of
$L_{-1,*}+ L_{1,*} = \zeta(V^-,V^-) + \zeta(V^+,V^+)$
in $L'_{0,-\sg 1}$, and one checks using this that
$\lm$ maps $U''$ onto $(L''_{0,1},L''_{0,-1})$.

(iii) The first statement is a consequence of Propositions \ref{prop:simplereflect} and \ref{prop:JPskew}(iii).
For the second statement, we know by Lemma \ref{lem:fd} that there exist $e^\sg_i \in U^\sg$ for $\sg = \pm$ and $i = 1,2$
such that $g(e^-_i,e^+_j) = \delta_{ij}$.
Thus,  $\zeta(e_1^\sg,e^\sg_2)e_2^\msg = e_1^\sg$
and $\zeta(u^\sg,e_1^\sg)e_1^\msg = u^\sg - g(e_1^\msg,u^\sg) e_1^\sg$ for $u^\sg \in U^\sg$,  so $U' = U$ and $U'' = 0$.
Hence we are done by (ii).
(See also Corollary \ref{cor:notJordan}.)
\end{proof}

It turns out the Kantor pairs in (iii) of the Proposition make up one of the four classes of central simple Kantor pairs that appear in the structure theorem in   \cite{AS}.

\begin{remark} \label{ex:skewfd} Suppose $\bR$ is a field.
If $f = (f^-,f^+)\in \Vm\times \Vp$ is another pair of vectors satisfying
$g(f^-,f^+)=1$, then it is easy to see using
Lemma \ref{lem:fd}  that there exists an \emph{isometry} $(\varphi^-,\varphi^+) \in \GL(\Vm) \times \GL(\Vp)$
of $g$ (satisfying $g(\varphi^- (v^-),\varphi^+ (v^+)) = g(v^-,v^+)$) such
that $\varphi^\sg(e^\sg) = f^\sg$ for $\sg = \pm$.  Using this fact it is also
easy to see that \emph{the $\BCtwo$-graded Lie algebra $\fo(\tg)$ and the Kantor pairs $\FSk(g)$ and $\FSk(g)\brv$
described in this subsection are independent
up to graded isomorphism of the choice~of~$e$}.
\end{remark}

\begin{example}[The finite dimensional case]  \label{ex:Dnfd} Suppose that $\bR$ is a field and $\Vs$ has finite dimension  $n$, where
$n\ge 3$.  Then $\ffo(\tg) = \fo(\tg)$ and
$\FSk(g) = \Sk(g)$. Choose dual basis $\set{v_i^-}_{i=1}^n$ and
$\set{v_i^+}_{i=1}^n$ for $\Vm$ and $\Vp$ relative to $g$ with
$v_1^- = e^-$ and $v_1^+ = e^+$, and use these bases to identify
$\End(\tV)^{\sg,\tau}$ with $M_n(\bR)$ for  $\sg,\tau = \pm$.  Then one sees directly using
Proposition \ref{prop:JPskew}(ii) and
Corollary \ref{cor:SPskew} that
$\FSk(g)$ is the double  of the Jordan triple system $\operatorname{A}_n(\bR)$ of
alternating $n\times n$-matrices with product $ABC + CBA$, and that
\[\textstyle \FSk(g)_0^\sg =  \sum_{i,j = 2}^n \bR (E_{ij} - E_{ji}) \andd
\FSk(g)_1^\sg = \sum_{i = 2}^n \bR (E_{i1} - E_{1i}).\]
(Here $E_{ij}$ is the $(i,j)$-matrix unit.)
Also,  $\ffo(\tg)$ is the split central simple Lie algebra of type $\type Dn$
(interpreting $\type D3$ as $\type A3$) \cite[\S IV.3]{Sel}.  So $\FSk(g)$ and
$\FSk(g)\brv$ are split central simple
Kantor pairs of
type  $\type Dn$ by Proposition~\ref{prop:simplereflect}.  Note that
$\FSk(g)$ (being Jordan) has  balanced $2$-dimension $0$;
whereas, by Proposition
Theorem \ref{thm:reflectskew}(iii),  $\FSk(g)\brv$ has  balanced $2$-dimension
$n-1$.
\end{example}

If $n$ is odd in Example \ref{ex:Dnfd}, the SP-grading of $\FSk(g)$
arises as in Example \ref{ex:JordSP} from an idempotent $c$ with trivial Peirce $0$-component
\cite[\S8.16]{L}.

\begin{remark}
If $\bR$ is algebraically closed of characteristic 0,  the Kantor pair
$\FSk(g)\brv$  in Example \ref{ex:Dnfd} is the double of  a Kantor triple system
that appears in Kantor's classification (mentioned in the introduction), where it is represented by
the notation
{
 \setlength{\unitlength}{5pt}
\begin{picture}(17,1)
\thicklines
\put(0,0){$\type C{n-1,n-1}$}
\drawline(8,.5)(11,.5)
\put(12,0){$\type A{n-1,1}$}
\end{picture}
}
\cite[(5.27)]{K1}.
In fact this notation displays the SP-grading on
$\FSk(g)\brv$.
\end{remark}

\begin{remark}\label{rem:fgpskew}   Example \ref{ex:Dnfd} can be formulated
somewhat more generally.  Indeed suppose we have the assumptions and notation of
Theorem \ref{thm:reflectskew},  suppose each $V^\sg$ is a finitely generated projective module of rank $n$ over $\bR$ with $n \ge 3$,
and suppose $g: V^- \times V^+ \to \bR$ is non-singular.  (See Subsection
\ref{subsec:fgp} below to recall this terminology.)
Then $Z(\ffo(\tg)) = 0$
and  $\ffo(\tg)$ tightly envelops $\FSk(g)$. Furthermore, $\FSk(g)$ is a form
of an SP-graded split Jordan pair of type $\type{D}{n}$ which is split if
each $V^\sg$ is free;
$\FSk(g)\brv$ is a form of a split SP-graded Kantor pair of
type $\type{D}{n}$ which is split if
each $V^\sg$ is free; and finally  the Jordan obstruction
of $\FSk(g)\brv$ is isomorphic to $U^\op$.  The verifications
of these facts, which we omit, follow the methods
(faithfully flat base ring extension to the free case) used in the next section, where
an   example of type $\type{E}{6}$ is treated in detail.
\end{remark}

\section{Construction of $\Esix$ and Kantor pairs from the exterior
algebra}

\label{sec:KPE6}

It is well known that over an algebraically closed field of characteristic $0$,
the simple Lie algebra $\cE$ of type $\type{E}{6}$ has a $5$-grading
with components $\Wg^6(V^*)$,  $\Wg^3(V^*)$,
$\operatorname{sl}(V) \oplus \frak \bR h$,
$\Wg^3(V)$  and $\Wg^6(V)$ as $\bR$-spaces, and with natural actions of
$\operatorname{sl}(V)$ on each component, where
$V$ is a six-dimensional space \cite[\S V.18, pp.89--90]{C}, \cite[\S 3.3.5]{GOV}.
Recently this fact has been used in work on gradings of  $\cE$ \cite[\S 3.4]{ADG}, \cite[\S 6.4]{EK}.

In this section, we take this point of view in a more general context to construct $5$-graded Lie algebras, SP-graded Kantor pairs and their reflections.
Throughout the section,   \emph{we assume  only that $\bR $ is a unital commutative associative  ring
(not necessarily containing $\frac 16$).  We will add further assumptions on $\bR$ as needed.}

\subsection{F.g.p.~modules and non-singular forms} \label{subsec:fgp}
 Recall that a module $M$ is \emph{finitely generated projective (f.g.p.)} if and only
 if $M$ is a
direct summand of a free module of finite rank.  If $M$ and $N$ are f.g.p.,
then $M^*$ and $M\otimes
N$ are f.g.p.  Moreover, we may identify $M$ with $M^{\ast\ast}$
where $m(\ph)=\ph(m)$ for $m\in M$ and $\ph\in M$, and the linear map
$M\otimes M^*\rightarrow\End(M)$
with $m\otimes\ph\rightarrow m\ph$ where $(m\ph)(m')=\ph
(m')m$ is bijective.  The \emph{trace} function $\tr$ on
$\End(M)$ is the unique linear map with $\tr(m\ph)=\ph(m)$.
If $A,B\in\End(M)$, then $\tr(AB)=\tr(BA)$.  For
more details see \cite[II.4.3]{B1}  and \cite[\S 2]{F}.

If $\fp \in \Spec(\bR)$, the set of   prime ideals of $\bR$, let $\bR _\fp =(\bR \backslash \fp)^{-1}\bR $ be the localization of $\bR $ at
$\fp$.  If $M$ is a finitely generated projective module, then $M_{\bR _\fp}$ is a free $\bR_\fp$-module of finite rank  \cite[II.5.2]{B2}.  If $M_{\bR _\fp}$
has rank $n$ for
all $\fp\in \Spec(\bR)$, we say $M$
has  \emph{rank} $n$, in which case
we have     $\tr(\id_M) = n 1_\bR$.

\subsection{Assumptions and notation}
\label{subsec:E6assume}
From  now on, we assume that
\begin{equation}  \label{eq:condg}
\parbox{.9\linewidth}{\centering $g:M^-\times M^+\rightarrow\bR$ is a non-singular bilinear form,   where\\
$M^-$ and $M^+$ are f.g.p.~modules of rank $n$.}
\end{equation}
(To check this condition, it is easy to see using \cite[II.5.3, (4)]{B2}
and \cite[II.2.7, Cor.~4]{B1} that it suffices to show that $M^+$ is f.g.p.~of rank $n$ and the map $v^- \rightarrow g(v^-,\  \ )$ from $V^-$ into $(V^+)^*$ is  bijective.)
If  $v^+\in M^+$ and $v^-\in M^-$, we again
set $g(v^+,v^-)=g(v^-,v^+)$ for convenience.

\begin{remark}  \label{rem:onemodule} (i) If $(M'^-, M'^+,g')$
is another triple satisfying \eqref{eq:condg}, an \emph{isomorphism}
of $(M^-, M^+,g)$ onto $(M'^-, M'^+,g')$ is a pair
$\theta=(\theta^-,\theta^+)$ where
$\theta^{\sg
}:M^\sg\rightarrow M'^\sg$ is a linear isomorphism and  $g'(\theta^-v^-,\theta^+v^+)=g(v^-,v^+)$ for $v^\sg\in
M^\sg$.

(ii)  If $N^+$ is an f.g.p.~module of rank $n$
and $\text{can} : (N^+)^* \times N^+ \to \bR$  is the canonical map given by
$\text{can}(x^-,y^+) = x^-(y^+)$, then
$((N^+)^* ,N^+,\text{can})$ satisfies \eqref{eq:condg}; and any triple
satisfying \eqref{eq:condg} is isomorphic to one obtained in this way.
So in this sense we are really just starting with an f.g.p.~module
of rank $n$. However, the more symmetric point of view taken here
is very convenient for us.

(iii) The reference  \cite[III]{B1} works with triples
$((N^+)^* ,N^+,\text{can})$ as in (ii).  In view of (ii), we can use facts from that reference.
\end{remark}

If  $A^\sg\in\End(M^\sg)$, then  since $g$ is non-singular  there exists a unique $(A^\sg)^* \in \End(M^{-\sg})$, called the \emph{adjoint} of $A^\sg$, such that
$g(v^\sg,(A^\sg)^*v^{-\sg})=g(A^\sg v^\sg,v^{-\sg})$ for $v^\sg\in M^\sg$, $v^{-\sg}\in
M^{-\sg}$

For $\sg=\pm$, we  form the exterior algebra $\Wg(M^\sg)$
with the natural $\bbZ$-grading
\[\textstyle
\Wg(M^\sg)=\bigoplus_{k\geq0}\Wg_{k}(M^\sg),
\]
where $\Wg_{k}(M^\sg)=0$ if $k<0$. For convenience, we write
the product in $\Wg(M^\sg)$ as $uv$ (rather than the usual
$u\wedge v$).

In the next three subsections, we record the facts about $\Wg(M^\sg)$
that we will need for our constructions. For more details, the
reader can consult \cite[III.7, III.10 and III.11]{B1} or \cite[\S 2]{F}.

\subsection{The $\cdot$ action of $\Wg(M^{-\sg})$ on
$\Wg(M^\sg)$}

\label{subsec:dotact}

Recall that an endomorphism $D$ of the graded algebra $\Wg(M^\sg)$ is called an \emph{anti-derivation of degree $-1$} of
$\Wg(M^\sg)$ if $D(\Wg_{k}(M^\sg
))\subseteq\Wg_{k-1}(M^\sg)$ and
\[
D(x^\sg y^\sg)=D(x^\sg)y^\sg+(-1)^{k}x^\sg D(y^{\sg })
\]
for $k\geq0$, $x^\sg\in\Wg_{k}(M^\sg)$ and $y^{\sg
}\in\Wg(M^\sg)$ \cite[III.10.3]{B1}.

For $v^{-\sg}\in M^{-\sg}$, there is a unique anti-derivation
$\Delta_{v^{-\sg}}$ of $\Wg(M^\sg)$ of degree $-1$ with
\[
\Delta_{v^{-\sg}}(v^\sg)=g(v^{-\sg},v^\sg)
\]
for $v^{\gamma}\in M^{\gamma}$ \cite[III.10.9, Ex.~2]{B1}. Since
$\Delta_{v^{-\sg}}^2=0$, the map $v^{-\sg}\rightarrow\Delta
_{v^{-\sg}}$ extends to a homomorphism $a\rightarrow\Delta_{a}$ of
$\Wg(M^{-\sg})$ into the associative algebra $\End(\Wg(M^\sg))$,
and hence we can view $\Wg(M^\sg)$ as a left module for the associative algebra $\Wg(M^{-\sg})$. We write the action as
\[
a\cdot x=\Delta_{a}(x)
\]
for $a\in\Wg(M^{-\sg})$, $x\in\Wg(M^\sg)$.

\begin{remark}
\label{rem:Bourbaki} If we identify $M^{-\sg}$ with
$(M^\sg)^*$ via the pairing $g$, then it follows from \cite[III.11.9,(68)]{B1} and \cite[III.11.8, Prop.~10]{B1} that $a\cdot x$ is the
\emph{left inner product} of $x$ by $a$ that is studied in
\cite[III.11.9]{B1}.
\end{remark}

Note that
$\Wg_{k}(M^{-\sg})\cdot\Wg_{\ell
}(M^\sg)
\subseteq\Wg_{\ell-k}(M^\sg)$. In particular if
$a\in\Wg_{k}(M^{-\sg})$ and $x\in\Wg_{k}(M^{\sg
})$, then $a\cdot x$ is a scalar in $\bR $ and, by \cite[Lemma 3(i)]{F},
we have
\[x\cdot a=a\cdot x.\]
Note also that
\begin{equation} \label{eq:nondegen}
x\in \Wg_k(M^\sg),\ x\cdot \Wg_k(M^\msg) = 0 \implies x = 0
\end{equation}
by \cite[III.11.5, Prop.~7]{B1}.
Furthermore, if $v^\sg\in M^\sg$ and $v^{-\sg}\in M^{-\sg}$,
then $v^\sg\cdot v^{-\sg}=g(v^\sg,v^{-\sg})$, so there is
no confusion of notation in case $M^-=M^+=\bR ^{n}$ and $g $ is the
usual dot product $\cdot$.

If $p\in\Wg_{n}(M^\sg)$, $q\in\Wg_{n}(M^{-\sg
})$ (recall that $n$ is the rank of $M^\sg$) and $a\in\Wg(M^{-\sg})$, then
\begin{equation}
(a\cdot p)\cdot q=(p\cdot q)a,\label{eq:apq}
\end{equation}
by \cite[III.11.11, Prop.~12(i)]{B1} or \cite[Lemma 4(i)]{F}.

\subsection{The Lie algebra $\cS$ and its $\circ$ action on $\Wg(M^\sg)$}

\label{subsec:SV} We consider the following Lie subalgebra
\begin{align*}
\cS  & =\cS(M^-,M^+,g)\\
& =\{(A^-,A^+)\in\End(M^-)\oplus\End(M^+):g(A^-
v^-,v^+)+g(v^-,A^+v^+)=0\}.
\end{align*}
of $\End(M^-)\oplus\End(M^+)$.
Note  that for $\sg = \pm$ we have the  Lie algebra isomorphism
$\iota^\sg: = \iota^\sg(M^-,M^+,g) :  \End(M^\sg) \to \cS$
given by
\[\iota^+(A^+) = (-(A^+)^*, A^+) \andd \iota^-(A^-) = (A^-,-(A^-)^*),\] in which case the inverse of $\iota^\sg$ is  projection onto the $\sg$-factor restricted to $\cS$.

If $A^\sg\in\End(M^\sg)$, there is a unique extension of
$A^\sg$ to a derivation $D_{A^\sg}$ of $\Wg(M^\sg)$
stabilizing each $\Wg_{k}(M^\sg)$, and it is easy to see that
$[D_{A^\sg},D_{B^\sg}]=D_{[A^\sg,B^\sg]}$
\cite[III.10.9, Example 1]{B1}. So each $\Wg_{k}(M^\sg)$,
and therefore also $\Wg(M^\sg)$, is a module.
for the Lie algebra $\End(M^\sg)$ with
\[
A^\sg\circ x^\sg=D_{A^\sg}(x^\sg)
\]
for $x^\sg\in\Wg_{k}(M^\sg)$.  We also view
$\Wg(M^\sg)$ as a module for $\cS$ with
\[
A\circ x^\sg:=A^\sg\circ x^\sg
\]
for  $A=(A^-,A^+)\in\cS$.

The $\cdot$ action and the $\circ$ action are related by the identity
\begin{equation}
A\circ(a\cdot x)=(A\circ a)\cdot x+a\cdot(A\circ x)\label{eq:SVmod}
\end{equation}
which, by \cite[Eq.~(2)]{F}, holds for $A\in\cS$, $a\in\Wg(M^{-\sg})$, $x\in\Wg(M^\sg)$, $\sg=\pm$.

Note that   if $A^\sg\in\End(M^\sg)$ and $p^\sg\in\Wg_{n}(M^\sg)$, we have
\begin{equation}
A^\sg\circ p^\sg=\tr(A^\sg)p^\sg.\label{eq:circn}
\end{equation}
Indeed, this holds if $M^\sg$ is free by \cite[III.10.9, Prop.~15]{B1};
and hence it holds in general by an easy localization argument (as in
\cite[\S 2, \P3]{F}).

\subsection{The elements $E(x^\sg,x^{-\sg})$ in $\cS$}

\label{subsec:BA} For $\sg=\pm$, $x\in\Wg_{k}(M^\sg)$,
$a\in\Wg_{k}(M^{-\sg})$ and $1\leq k\leq n$, we define
$E^\sg(x,a)\in\End(M^\sg)$ by
\[
E^\sg(x,a)(v^\sg)=(v^\sg\cdot a)\cdot x.
\]
In that case we have
\begin{equation}
E^\sg(x,a)^*=E^{-\sg}(a,x)\text{ and }\tr(E^{\sg
}(x,a))=k\,x\cdot a,\label{eq:Bsgprop}
\end{equation}
by parts (ii) and (iv) of \cite[Lemma 3]{F} (where $E^\sg(x,a)$ is
denoted by $e(x,a)$).

If $x^-\in\Wg_{k}(M^-)$, $x^+\in\Wg_{k}(M^+)$
and $1\leq k\leq n$, we define
\[
E(x^-,x^+)=(E^-(x^-,x^+),-E^+(x^+,x^-))\text{ and }
E(x^+,x^-)=-E(x^-,x^+).
\]
Then by the first equation in \eqref{eq:Bsgprop}, $E(x^\sg,x^{-\sg})\in\cS$ for $\sg=\pm$. Observe also that
\begin{equation}
E(x^\sg,x^{-\sg})\circ z^\sg=E^\sg(x^\sg,x^{-\sg})\circ z^\sg\label{eq:Bact}
\end{equation}
for $z^\sg\in\Wg(M^\sg)$.

\subsection{The $5$-graded Lie algebra $\tE$ }
\label{subsec:tildeE6Lie}
\emph{From now on suppose that~$n=6$.}  So
\begin{equation}  \label{eq:condg6}
\parbox{.9\linewidth}{\centering $g:M^-\times M^+\rightarrow\bR$ is a non-singular bilinear form,
where\\ $M^-$ and $M^+$ are f.g.p.~modules of rank $6$.}
\end{equation}

Let $\tS$ be the Lie algebra
direct sum
\[
\tS =\tS (M^-,M^+,g)=\cS(M^-
,M^+,g)\oplus\bR h_0,
\]
where $\bR h_0$ is free with basis $h_0 = h_0(M^-,M^+,g)$ and
$\bR h_0$ has trivial product.  We extend the action of
$\cS$ on the subalgebra
\[
\Wg_{(3)}(M^\sg):=\bR 1\oplus\Wg_{3}(M^\sg)\oplus\Wg_{6}(M^\sg)\]
of $\Wg(M^\sg)$ generated by
$\Wg_{3}(M^\sg)$ to an  action of $\tS $ on $\Wg_{(3)}(M^\sg)$ by letting
\[h_0\circ p=\sg kp \quad \text{for } p\in\Wg_{3k}(M^\sg). \]
Thus,
$\tS $ acts as derivations of $\Wg_{(3)}(M^{\sg
})$.

We define a $\bbZ$-graded module  $\tE = \tE(M^-,M^+,g) := \bigoplus_{i\in \bbZ} \tE_i$, where the modules
$\tE_i = \tE_i(M^-,M^+,g)$ are given by
\[\tE_0 :=\tS,\quad \tE_{\sg k} :=\Wg_{3k}(M^\sg) \text{  for }
k=1,2, \text{ and } \tE_{\sg k} :=0 \text{ for } k>2.\]
Define a
$\bbZ$-graded skew-symmetric product on $\tE$ by
\begin{equation}
\begin{tabular}
[c]{ll}
$[ A,B]$ is the above product in $\tS$, & $[A,p_{i}]=A\circ p_{i}$, for $i=\pm1,\pm2,$\\
$[ p_{-1},q_1]=E(p_{-1},q_1)+(p_{-1}\cdot q_1)h_0,$ &
$[p_{-2},q_2]=-(p_{-2}\cdot q_2)(h_M-2h_0),$\\
$[ p_{i},q_{i}]=p_{i}q_{i}$ for $i=\pm1,$ & $[p_{i},q_{-2i}]=p_{i}\cdot
q_{-2i}$ for $i=\pm1,$
\end{tabular}
\label{eq:Eproduct}
\end{equation}
for $A,B\in\tS$, $p_{j},q_{j}\in\tE_{j}$,
where
\[h_M := \iota^+(\id_{M^+}) = (-\id_{M^-},\id_{M^+}) \in \cS.\]

\begin{remark}
If $i\in \bbZ$ and $p_{i}\in\tE_i$, then
\begin{equation}\label{eq:hVact}
[ h_0,p_{i}]=ip_{i} \andd [h_M,p_i] = 3ip_i,
\end{equation}
so $h_M-3h_0\in Z(\tE)$.
\label{eq:centre}
Also,  if $x^\sg\in$
$\tE_{\sg1}$ and $y^{-\sg}\in$ $\tE$
$_{-\sg1}$, then
\begin{equation} \label{eq:1prod2}
[ x^\sg,y^{-\sg}]=E(x^\sg,y^{-\sg})-\sg(x^\sg\cdot y^{-\sg})h_0;
\end{equation}
so, if $z^\sg\in$ $\tE_{\sg i}$ with $i=1,2$, we
have
\begin{equation}\label{eq:3prod}
[[ x^\sg,y^{-\sg}],z^\sg]=
E^\sg(x^\sg,y^{-\sg})\circ z^\sg-i(x^\sg\cdot y^{-\sg})z^\sg
\end{equation}
by \eqref{eq:Bact} and \eqref{eq:hVact}. Moreover, if $p^\sg\in\tE_{\sg2}$ and $q^{-\sg}\in\tE_{-\sg2}$, then
\begin{equation}\label{eq:2prod2}
[ p^\sg,q^{-\sg}]=\sg(p^\sg\cdot q^{-\sg
})(h_M-2h_0);
\end{equation}
so, if $r^\sg\in\tE_{\sg2}$, we have, using \eqref{eq:hVact} and \eqref{eq:apq},
\begin{equation}
[[ p^\sg,q^{-\sg}],r^\sg]= 2(p^\sg \cdot q^{-\sg}) r^\sg  = (p^\sg \cdot q^{-\sg}) r^\sg + (r^\sg \cdot q^{-\sg}) p^\sg.
\label{eq:3prod2}
\end{equation}
\end{remark}

\begin{theorem}
\label{thm:EtildeLie} Suppose $\bR $ is a unital commutative
ring and $(M^-,M^+,g)$ satisfies \eqref{eq:condg6}.
Then $\tE=\tE(M^-,M^+,g)$  is a $5$-graded Lie algebra.
\end{theorem}

\begin{proof}
It suffices to check the Jacobi identity
\[
J(z_1,z_2,z_{3}):=[[z_1,z_2],z_{3}]+[[z_2,z_{3}],z_1
]+[[z_{3},z_1],z_2]=0
\]
for homogeneous $z_1\in$ $\tE_{d_1},z_2\in$
$\tE_{d_2},z_{3}\in$ $\tE_{d_{3}}$,
where $|d_{i}|\leq2$. Moreover, since the product is skew-symmetric,
$J(z_1,z_2,z_{3})=0\text{ implies }J(z_{\pi1},z_{\pi2},z_{\pi3})=0$ for
any $\pi\in S_{3}$. Also, since $\tE_{\sg k}=0$ for
$k>2$, we can assume $|d_1+d_2+d_{3}|\leq2$. With these observations, we
are reduced to considering the following cases for $(d_1,d_2,d_{3})$:
\begin{gather*}
(0,0,0),\ (0,0,\sg1),\ (0,0,\sg2),\ (0,\sg1,\sg
1),\ (0,2,-2),\ (0,\sg1,-\sg2),\ (0,-1,1),\\
(\sg 2,-\sg 2,\sg2),\ (\sg1,\sg1,-\sg2),(\sg1,\sg1,-\sg
1),\ (\sg1,\sg2,-\sg2),\ (\sg1,\sg2,-\sg1),
\end{gather*}
where in each case $\sg=\pm$.

Now the case $(0,0,0)$ holds since $\tS$ is a Lie algebra; the
cases $(0,0,\sg1)$, $(0,0,\sg2)$ hold since $\Wg(M^{\sg
})$ is an $\tS$-module under the $\circ$ action; the case
$(0,\sg1,\sg1)$ holds since $\tS$ acts by derivations on
$\Wg(M^\sg)$ under $\circ$; and the cases $(0,2,-2)$ and
$(0,\sg1,-\sg2)$ follow from (\ref{eq:SVmod}). This leaves the following cases:

\smallskip\emph{Case $(0,-1,1)$}: For $A\in\tS$, $x^-
\in\Wg_{3}(M^-)$ and $x^+\in\Wg_{3}(M^+)$, we
have
\begin{align*}
J(A,x^-,x^+) &  =[[A,x^-],x^+]-[A,[x^-,x^+]]+[x^-,[A,x^+]]\\
&  =E(A\circ x^-,x^+)+((A\circ x^-)\cdot x^+)h_0-[A,E(x^-
,x^+)+(x^-\cdot x^+)h_0]\\
&  \qquad\qquad+E(x^-,A\circ x^+)+(x^-\cdot(A\circ x^+))h_0\\
&  =E(A\circ x^-,x^+)+E(x^-,A\circ x^+)-[A,E(x^-,x^+)],
\end{align*}
since, by (\ref{eq:SVmod}), $(A\circ x^-)\cdot x^++x^-\cdot(A\circ
x^+)=A\circ(x^-\cdot x^+)=0$. For $v\in M^-$,
\begin{multline*}
(E(A\circ x^-,x^+)+E(x^-,A\circ x^+)
-[A,E(x^-,x^+)])\circ v
 =(v\cdot x^+)\cdot(A\circ x^-)
 \\+(v\cdot(A\circ x^+))\cdot x^-
-A\circ((v\cdot x^+)\cdot x^-)+((A\circ v)\cdot x^+)\cdot x^-
\end{multline*}
This is $0$ by (\ref{eq:SVmod}), so $J(A,x^-,x^+)=0$.

\smallskip\smallskip\emph{Case $(\sg 2,-\sg 2,\sg2)$}:
For $p,r\in\tE_{\sg2}$ and $q\in$ $\tE_{-\sg2}$, we have
$J(p,r,q)= 0 + [[r,q],p] - [[p,q],r]$, which is $0$ by \eqref{eq:3prod2}.

\smallskip\emph{Case $(\sg1,\sg1,-\sg2)$}: For $x,y\in\tE_{\sg1}$,
and $q\in$ $\tE_{-\sg2}$, we have, using
\eqref{eq:2prod2},
\begin{align}
J(x,y,q) &  =[[x,y],q]+[[y,q],x]-[[x,q],y]\nonumber\\
&  =\sg((xy)\cdot q)(h_M-2h_0)  +E(y\cdot q,x)+\sg(x\cdot(y\cdot q))h_0\nonumber\\
&  \qquad\qquad-E(x\cdot q,y)-\sg(y\cdot(x\cdot q))h_0\nonumber\\
&  =\sg((xy)\cdot q)h_M+E(y\cdot q,x)-E(x\cdot
q,y)\nonumber
\end{align}
For $v\in M^{-\sg}$, we have
\begin{align*}
(E(y\cdot q,x)-E(x\cdot q,y))\circ v  & =(v\cdot x)\cdot(y\cdot q)
-(v\cdot y)\cdot(x\cdot q)\\
& =((v\cdot x)y)\cdot q-((v\cdot y)x)\cdot q\\
& =(v\cdot(xy))\cdot q =((xy)\cdot q)v
\end{align*}
by (\ref{eq:apq}).
Thus, $E(y\cdot q,x)-E(x\cdot q,y)=-\sg((xy)\cdot q)h_M$,
so $J(x,y,q)=0$.

\smallskip\emph{Case $(\sg1,\sg1,-\sg1)$}: For $x,y\in\tE_{\sg1}$ and $a\in$ $\tE_{-\sg1}$, we have
\begin{align*}
J(x,y,a) &  =-[a,[x,y]]+[[y,a],x]-[[x,a],y]\\
&  =-a\cdot(xy)+E^\sg(y,a)\circ x-(y\cdot a)x-E^\sg(x,a)\circ
y+(x\cdot a)y
\end{align*}
using \eqref{eq:3prod}. This is $0$ by \cite[Lemma 3(vi)]{F}.

\smallskip\emph{Case $(\sg1,\sg2,-\sg2)$}: For $x\in$
$\tE_{\sg1}$, $p\in$ $\tE_{\sg2}$,
$q\in$ $\tE_{-\sg2}$, we have
\begin{align*}
J(x,p,q) &  =[[x,p],q]-[[q,p],x]-[[x,q],p] &  & \\
&  =0+\sg[(q\cdot p)(h_M-2h_0),x]-(x\cdot
q)\cdot p &  &  \text{by \eqref{eq:2prod2}}\\
&  =(q\cdot p)x-(x\cdot q)\cdot p &  &  \text{by \eqref{eq:hVact}.}
\end{align*}
This is $0$ by \eqref{eq:apq}.

\smallskip\emph{Case $(\sg1,\sg2,-\sg1)$}: For $x\in$
$\tE_{\sg1}$, $p\in$ $\tE_{\sg2}$,
$a\in$ $\tE_{-\sg1}$, we have
\begin{align*}
J(x,p,a) &  =[[x,p],a]-[[a,p],x]-[[x,a],p] &  & \\
&  =0-(a\cdot p)x-E^\sg(x,a)\circ p+2(x\cdot a)p &  &  \text{by
\eqref{eq:3prod}}\\
&  =-(a\cdot p)x-\tr(E^\sg(x,a))p+2(x\cdot a)p &  &  \text{by
\eqref{eq:circn}}\\
&  =-(a\cdot p)x-3(x\cdot a)p+2(x\cdot a)p &  &  \text{by \eqref{eq:Bsgprop}}
\\
&  =-(a\cdot p)x-(x\cdot a)p. &  &
\end{align*}
But if $q\in$ $\tE_{-\sg2}$, we have
\begin{align*}
-((a\cdot p)x)\cdot q &  =(x(a\cdot p))\cdot q=x\cdot((a\cdot p)\cdot
q)=x\cdot((p\cdot q)a)\quad\text{by \eqref{eq:apq}}\\
&  =(x\cdot a)(p\cdot q)=((x\cdot a)p)\cdot q,
\end{align*}
so $-(a\cdot p)x=(x\cdot a)p$ by (\ref{eq:nondegen}).
\end{proof}

\begin{remark}\label{rem:ranktE}   For $i=-2,-1,,0,1,2$,
the module $\tE_i$  is f.g.p.~of rank
$1$, $20$, $37$, $20$, $1$ respectively.
Indeed this holds
since $\Wg_k(M^\sg)$ is f.g.p.~of rank $6 \choose k$ for $0 \le k \le 6$ and $\End(M^\sg)$ is f.g.p.~of rank $36$ \cite[II.5.3]{B1}.
So $\tE$ is f.g.p.~of rank $79$.
\end{remark}

\begin{remark}\label{rem:extendE}
Suppose that  $\bbF\in\Kalg$.
One sees using
\cite[II.5.3]{B2}, \cite[II.7.5]{B2} and \cite[II.5.3]{B1}
that $(M^-_\bbF,M^+_\bbF. g_\bbF)$ satisfies  \eqref{eq:condg6} (over $\bbF$).
We now show that \emph{there exists a canonical $\bbZ$-graded $\bbF$-algebra isomorphism $\omega : \tE_\bbF \to \tE\tripF$}.  We define $\omega$ by defining its restriction $\omega_i$ to the
$i$-th graded component $(\tE_i)_\bbF$ of   $\tE_\bbF$ for $-2\le i \le 2$.
First
\[
(\tE_0)_\bbF = \cS_\bbF \oplus \bbF (1\otimes h_0)
\text{ and } \tE_0\tripF= \cS\tripF \oplus h_0\tripF. \]
We define
$\omega_0$ on $\cS_\bbF$ as the
composite $\bbF$-algebra isomorphism
$\cS_\bbF \to \End(M^+)_\bbF \to \End(M_\bbF^+) \to \cS\tripF$,
where the first isomorphism is induced by $(\iota^+)^{-1}$ (see Subsection \ref{subsec:SV}), the second
is canonical \cite[II.5.4]{B1} and the third is $\iota^+\tripF$; and we define $\omega_0 ( 1\otimes h_0 ) = h_0\tripF$.
Lastly we have the canonical $\bbF$-module isomorphism
\[\omega_{\sg k} : (\tE_{\sg k})_\bbF = (\Wg_{3k}(M^\sg))_\bbF
\to \Wg_{3k}(M^\sg_\bbF) = \tE_{\sg k}\tripF\]
for $\sg = \pm$, $k = 1,2$ \cite[III.7.4, Prop.~8]{B1}.  One checks that
the direct sum $\omega$ of these maps is in fact an $\bbF$-algebra isomorphism as desired.
\end{remark}

\subsection{The $5$-graded Lie algebra $\cE$}
\label{subsec:E6Lie}
We now introduce an ideal $\cE$ of $\tE$  which we will see in Proposition
\ref{prop:E0fgp} is actually the derived algebra of $\tE$.
For this we define a linear map $\lambda = \lambda(M^-,M^+,g) : \tE_0 \to \bR$ by
\[\lambda((A^-,A^+)+ah_0) = \tr(A^+)+3a\]
for $(A^-,A^+)\in \cS$ and $a\in \bR$.
Let
 \[\cE= \cE(M^-,M^+,g) :=
 \textstyle \bigoplus_{i\in \bbZ} \cE_i \ \text{ in } \ \tE,\]
where the submodules $\cE_i = \cE_i(M^-,M^+,g)$ of $\tE$ are given by
\[\cE_{i} :=\tE_{i} \text { for } i \neq 0,  \andd \cE_0 :=\{X\in\tE_0:\lambda(X)=0\}.\]
One checks easily using
\eqref{eq:Eproduct}, $\tr(\id_{M^+}) = 6(1_\bR)$
and $\tr([A^+,B^+])=0$, that
$\lambda([\tE_{-i},\tE_{i}])=0$ for $i=0,1,2$.
So $\cE$ contains the derived algebra $[\tE,\tE]$ and is hence a $5$-graded ideal of $\tE$.

If $\bbF\in \Kalg$, one checks that the $\bbF$-algebra isomorphism
$\omega_0 : (\tE_0)_\bbF \to \tE_0\tripF$ in Remark \ref{rem:extendE} satisfies
\begin{equation} \label{eq:extendlm}
\lambda(M^-,M^+,g)_\bbF = \lambda(M^-_\bbF,M^+_\bbF,g_\bbF) \circ \omega_0.
\end{equation}

We have a (non-canonical) direct sum decomposition of $\tE_0$:

\begin{lemma} \label{lem:lmsurject}
The map $\lambda : \tE_0 \to \bR$ is surjective, and if $X_0\in \tE_0$ is chosen so that $\lambda(X_0) = 1$, then
$\tE_0  = \cE_0 \oplus \bR X_0$ and $\bR X_0$ is free of rank 1 with basis $X_0$.
\end{lemma}

\begin{proof}
For $\fp\in \Spec(\bR)$, $M_{\bR_\fp}^+$ is free of rank $6$, so there is $A^+\in
\End(M_{\bR_\fp}^+)$ with $\tr(A^+)=1$.  Hence $\lambda(M^-_{\bR_\fp},M^+_{\bR_\fp},M^-_{\bR_\fp})$
is surjective,
so $\lambda_{\bR_\fp}$ is surjective by \eqref{eq:extendlm} (with $\bbF = \bR_\fp$).
Since this holds for all $\fp\in \Spec(\bR)$ and since $\tE_0$
is f.g.p., it follows that $\lambda: \tE_0 \rightarrow
\bR $ is surjective \cite[II.3.3]{B2}.  The rest is clear.
\end{proof}

\begin{remark}\label{rem:rankcE}  It follows from Remark
\ref{rem:ranktE} and Lemma \ref{lem:lmsurject} that for $i=-2,-1,,0,1,2$,
the module $\cE_i$  is f.g.p.~of rank
$1$, $20$, $36$, $20$, $1$ respectively.   So
$\cE$ is f.g.p.~of rank $78$.
\end{remark}

If $\bbF\in \Kalg$,  it follows from Lemma \ref{lem:lmsurject} that
the canonical homomorphism $\cE_\bbF \to \tE_\bbF$ induced by inclusion
is injective.  Using this map \emph{we will identify
$\cE_\bbF$ as a $\bbZ$-graded subalgebra of $\tE_\bbF$}.

\begin{lemma}\label{lem:extendE} If $\bbF\in \Kalg$,  the restriction
of the isomorphism $\omega$ in Remark
\ref{rem:extendE}
is a $\bbZ$-graded $\bbF$-algebra isomorphism from $\cE_\bbF$ onto $\cE\tripF$.
\end{lemma}

\begin{proof}  One checks using the direct sum decomposition $\tE_0  = \cE_0 \oplus \bR X_0$ in Lemma \ref{lem:lmsurject}, that
$(\cE_0)_\bbF = \{X\in(\tE_0)_\bbF :\lambda_\bbF(X)=0\}$.  It follows from this and \eqref{eq:extendlm} that
$\omega_0 ((\cE_0)_\bbF) = \cE_0\tripF$ as needed.
\end{proof}

\begin{remark} \label{rem:freenotation}  Suppose
$M^\sg$, $\sg=\pm$, is free of rank $6$.  We now introduce some
notation that will be useful in our calculations.
Since $g$ is non-singular, we can choose   bases
$B^\sg=\{v_1^\sg,\ldots,v_{6}^\sg\}$ for $M^\sg$, $\sg = \pm$,
that are dual relative to $g$.
We use these bases to identify
$\End(M^-)$ and $\End(M^+)$ with $\mathrm{M}_{6}(\bR )$,  in which case
$(A^\sg)^*$ is the transpose of $A^\sg$ for $A^\sg \in \End(M^\sg)$.  Then
$E(v_{i}^+,v_{j}^-)=\iota^+(E_{ij})$ for $i\neq j$, where $E_{ij}$ is the
standard matrix unit in $\mathrm{M}_{6}(\bR )$ and $\iota^+ = \iota^+(M^-,M^+,g)$.
For $S=\{i_1 < \cdots < i_{\ell}\}\subseteq[1,6]:=\{1,2,3,4,5,6\}$, let
\[
v_{S}^-  =v_{i_{\ell}}^-\cdots v_{i_1}^- \andd
v_{S}^+  =v_{i_1}^+\cdots v_{i_{\ell}}^+.
\]
so $\{v_{S}^+:\left\vert S\right\vert =k\}$ and
$\{v_{S}^+:\left\vert S\right\vert =k\}$  are dual bases for
$\Wg_{k}(M^-)$ and $\Wg_{k}(M^+)$ relative to the
bilinear form~$\cdot$.  Note that if $S=\{i<j<k\}$ and $\left\vert T\right\vert =3$,
then $E^+(v_{S}^+,v_{T}^-)   =E^+(v_{i}^+,(v_{j}^+v_{k}^+)\cdot
v_{T}^-)+E^+(v_{j}^+,(v_{k}^+v_{i}^+)\cdot v_{T}^-)+E^+
(v_{k}^+,(v_{i}^+v_{j}^+)\cdot v_{T}^-)$
by \cite[Lemma 3(v)]{F},  so by \eqref{eq:1prod2} we have
\begin{equation}
[v_S^+,v_T^-] =\left\{
\begin{tabular}
[c]{ll}
$\iota^+(E_{ii}+E_{jj}+E_{kk}) - h_0$ & if $T=S,$\\
$\pm \iota^+(E_{pq})$ & if $S\backslash T=\{p\},\ T\backslash S=\{q\},$\\
$0$ & if $\left\vert T\cap S\right\vert \leq1$.
\end{tabular}
\right. \label{eq:EvSvT}
\end{equation}
Finally let
\begin{equation*}
h_{i}   =\iota^+(E_{ii}-E_{i+1,i+1})\text{ for }1\leq i\leq5 \andd
h_{6}   =\iota^+(E_{44}+E_{55}+E_{66})-h_0
\end{equation*}
in $\cE_0$.
\end{remark}

\begin{lemma} Suppose that $M^\sg$, $\sg=\pm$, is free.  With the above notation, the set
\[
B_{\cE_0}=\{h_{i}:1\leq i\leq6\}\cup\{\iota^+(E_{ij}):1\leq i\neq
j\leq6\}
\]
is a basis for $\cE_0(M^-,M^+,g)$, and
\[
B_{\cE}=B_{\cE_0}\cup\{v_{S}^\sg:\sg=\pm,\left\vert
S\right\vert =3\}\cup\{v_{[1,6]}^\sg:\sg=\pm\}
\]
is a basis for $\cE(M^-,M^+,g)$, where  $[1,6] := \set{1,2,3,4,5,6}$.
\end{lemma}

\begin{proof}
Since $\{v_{S}^\sg:\left\vert S\right\vert =k\}$ is a basis for
$\Wg_{k}(M^\sg)$, it suffices to show $B_{\cE_0}$
is a basis for $\cE_0$.  Since  $\tE_0=\cS
\oplus\bR h_0=\cS\oplus\bR h_{6}$ and $B_{\cE
_0}\backslash\{h_{6}\}$ is independent in $\cS$, we see that
$B_{\cE_0}$ is independent.  If $X = (A^-,A^+)+ah_0
\in\cE_0$ with $(A^-,A^+) \in \cS$ and $a\in \bR$,
then $X+ah_6 \in \cE_0$
 and $X+ah_6 = (B^-,B^+)\in \cS$ for some $B^\sg \in M_6(\bR)$.
Thus $\tr(B^+) = 0$ and hence
$(B^-,B^+) \in \spann_\bbZ\set{h_1,\dots,h_5}$.
\end{proof}

\begin{proposition} \label{prop:E0fgp}
$\cE_0=[\cE_{-1},\cE_1]$;\ \  $\cE$ is
generated by $\cE_{-1}\cup\cE_1$; \  and
\[\cE=[\tE,\tE]=[\cE,\cE].\]
\end{proposition}

\begin{proof}
It suffices to show $[\cE_{i},\cE_{j}
]=\cE_{i+j} $ for $(i,j)=(-1,1)$, $(\sg1,\sg1)$, and
$(-\sg1,\sg2)$.

First we assume that $M^\sg$ is free of rank $6$,
$\sg=\pm$, and we use the notation of Remark
\ref{rem:freenotation}.
Then \eqref{eq:EvSvT} and \eqref{eq:1prod2}  show
that $\iota^+(E_{ij})$
and $\iota^+(E_{ii} + E_{jj} + E_{jj})-h_0$ are in $[\cE_{- 1},\cE_1]$ for distinct $i,j,k$.
Thus $B_{\cE_0}
\subseteq[\cE_{-1},\cE_1]$, so $\cE
_0=[\cE_{-1},\cE_1]$.  Also, if $\left\vert S\right\vert
=\left\vert T\right\vert =3$, then
\begin{align} \label{eq:[vS,vT]}
& [ v_{S}^\sg,v_{T}^\sg]
 =v_{S}^\sg v_{T}^\sg=\left\{
\begin{tabular}
[c]{ll}
$\pm v_{[1,6]}^\sg$ & $S\cup T=[1,6]$\\
$0$ & $S\cup T\neq[1,6]$
\end{tabular}
\right.,\\
&[ v_{S}^{-\sg},v_{[1,6]}^\sg]
 =v_{S}^{-\sg}\cdot v_{[1,6]}^\sg=\pm v_{[1,6]\backslash S}^\sg,\nonumber
\end{align}
so $[\cE_{\sg1},\cE_{\sg1}]=\cE_{\sg2}$ and
$[\cE_{-\sg1},\cE_{\sg2}]=\cE_{\sg1}$.

In
the general case, it follows from Lemma \ref{lem:extendE}  and the preceding paragraph
that for $\fp\in \Spec(\bR)$ we have $[(\cE_{i})_{\bR_\fp},(\cE_{j})_{\bR _\fp}]=(\cE_{i+j})_{\bR _\fp}$
for $(i,j)=(-1,1)$, $(\sg1,\sg1)$, and
$(-\sg1,\sg2)$.
But by Remark \ref{rem:rankcE},   $\cE_{i}$, $\cE_{j}$, $\cE_{i+j}$
and $\cE_{i}\otimes\cE_{j}$ are f.g.p.~with $[\cE
_{i},\cE_{j}]\subseteq\cE_{i+j}$.
Then a localization argument
using the multiplication map $\cE_{i}\otimes\cE_{j}
\rightarrow\cE_{i+j}$ shows that
$[\cE_{i},\cE_{j}]=\cE_{i+j}$.
\end{proof}

\begin{remark}  \label{rem:isoE}
For use in the next proof
(and again in Proposition \ref{prop:BC2E6}), we describe here two isomorphisms  involving the
Lie algebras $\tE$ and $\cE$. We omit some of the details which are easy to fill in.

(i)  First suppose $(M'^-, M'^+,g')$
is another triple satisfying \eqref{eq:condg6} and
$\theta=(\theta^-,\theta^+)$ is an isomorphism of
 $(M^-, M^+,g)$ onto $(M'^-, M'^+,g')$.
It is easy to see that $\theta^\sg$ extends to a graded algebra
isomorphism $\Wg(M^\sg)\rightarrow\Wg
(M^{\prime\sg})$ and induces by conjugation an isomorphism
$\End(M^\sg)\rightarrow\End(M^{\prime\sg})$ such that the ``products''
$\circ$ and $\cdot$ are preserved.  Thus \emph{$\theta$ induces a graded isomorphism}
\begin{equation*}
\psi_{\theta}:\tE(M^-,M^+,g)\rightarrow\tE (M'^-,M'^+,g')
%\label{eq:psitheta}.
\end{equation*}
which maps $\tS(M^-,M^+,g)$ onto $\tS(M'^-,M'^+,g')$
and  $h_0$ to $h_0 (M'^-,M'^+,g')$. It is  clear that
$\psi_{\theta}$
maps $\cE(M^-,M^+,g)$ onto $\cE(M'^-,M'^+,g')$.

(ii) We consider now the opposite triple $(M^-,M^+,g)^{op}=(M^+,M^-,g)$.  It is
easy to see that for $x\in\Wg_{3}(M^\sg)$ and $a\in
\Wg_{3}(M^{-\sg})$, we have $(E^{op})^{-\sg}(x,a)=E^\sg(x,a)$, so
\begin{align*}
E^{op}(x_{-1},x_1) & =(-E^+(x_1,x_{-1}),E^-(x_{-1},x_1)),
\end{align*}
for $x_{i}\in\cE_{i}$.  Then \emph{the map
$\zeta:\tE(M^-,M^+,g)\rightarrow\tE(M^+,M^-,g)$ defined by
\begin{equation*}
\zeta(x_{-2}+x_{-1}+(A^-,A^+)+ah_0+x_1+x_2) = x_2
+x_1+(A^+,A^-)-ah_0^{op}+x_{-1}+x_{-2}
%\label{eq:zeta}
\end{equation*}
for $x_{i}\in\cE_{i}$, $(A^-,A^+)\in\cS(M^-,M^+,g)$   and
$a\in\bR $, is a graded algebra isomorphism}.  For the  proof of this, we check two cases (the others being easily checked).  We have
\begin{align*}
[\zeta(x_{-1}),\zeta(x_1)]^{op}  & =[x_{-1},x_1]^{op}=E^{op}(x_{-1},x_1)-(x_{-1}\cdot x_1)h_0^{op}\\
& =\zeta(-E(x_1,x_{-1})+(x_{-1}\cdot x_1)h_0) =\zeta([x_{-1},x_1])
\end{align*}
and
\begin{align*}
[\zeta(x_{-2}),\zeta(x_2)]^{op}  & =[x_{-2},x_2]^{op}=(x_{-2}\cdot
x_2)((-\id_{M^+},\id_{M^-})-2h_0^{op})\\
& =\zeta((x_{-2}\cdot x_2)((\id_{M^-},-\id_{M^+})+2h_0) =\zeta([x_{-2},x_2]).
\end{align*}
Evidently  $\zeta$   maps
$\cE(M^-,M^+,g)$ onto $\cE(M^+,M^-,g)$.
\end{remark}

\begin{remark} \label{rem:Galois}  By \cite[II.5, Exercise 8]{B2},
there exists a faithfully flat $\bbF\in \Kalg$ such that $M^\sg$
is free of rank $6$ for $\sg = \pm$.  So, using Remarks \ref{rem:extendE},
\ref{rem:freenotation} and \ref{rem:isoE}(i), we have $5$-graded isomorphisms
$\cE_\bbF \simeq \cE(\bbF^6,\bbF^6,\cdot) \simeq \cE(\bR^6,\bR^6,\cdot)_\bbF$.
In other words, $\cE$ is an \emph{$\bbF/\bR$-form of $\cE(\bR^6,\bR^6,\cdot)$}
as defined in
\cite[III.1.1]{Ser}.
A similar remark, which we leave to the reader, holds for the
Kantor pairs $\Wg_3$ and the SP-graded Kantor pairs $\Wgref$
described in Subsections \ref{subsec:Wg3} and \ref{subsec:BCtwoE} below.
\end{remark}

\begin{proposition}
\label{prop:complexE6}  If $\bR = \bbC$, then $\cE=\cE(\bbC ^{6},\bbC
^{6},\cdot)$ is a simple Lie algebra of type $\Esix$ and
$B_{\cE}$ is a Chevalley basis of $\cE$.
\end{proposition}

\begin{proof}  We use the  notation of Remark \ref{rem:freenotation}
relative to the standard dual bases.

We first note that $\eta:(A^-,A^+)+
ah_0\rightarrow A^++\frac1{3}a\id_{\bbC ^{6}}$
is a homomorphism $\eta:\tS
=\tS(\bbC^{6},\bbC ^{6},\cdot)\rightarrow \End(M^+)=\mathrm{M}_{6}(\bbC )$
with $\eta(X)\circ x=X\circ x$
for $X\in\tS$, $x\in\cE_{k}$, $k=1,2$.  Since
$\tr(\eta(h_{6}))=1$, $\eta$ maps $B_{\cE_0}$ to a basis for
$\mathrm{M}_{6}(\bbC )$, so $\eta$ restricts to an isomorphism
$\cE_0\rightarrow\mathrm{M}_{6}(\bbC )$.

Using \eqref{eq:[vS,vT]} we see that if $I$ is an ideal of $\cE$ and
$x\in I$ with  nonzero component in
$\cE_{\sg1}\oplus \cE_{\sg2}$, then some
$y\in[ x,\cE_{\sg1}]\cup\bbC x$ has component $v_{[1,6]}^\sg$ in $\cE_{\sg
2}$; so (fixing $S \subseteq [1,6]$ with $\left\vert S\right\vert =3$) some $z\in[ y,\cE_{-\sg1}]$ has component
$v_{S}^\sg$ in $\cE_{\sg1}\oplus\cE_{\sg2}$, and  hence
$[[[z,\cE_{-\sg2}],\cE_{-\sg1}],\cE_{\sg1}]=\cE_{-\sg1}$.  Thus, an ideal
$I$ is either contained in $\cE_0$ or contains $\cE
_{\sg1}$, $\sg = \pm$, and hence $\cE$.  If $I\subseteq
\cE_0$, then $\eta(I)$ is an ideal of the Lie algebra $\mathrm{M}_{6}(\bbC )$,
so $\eta(I)=0$, $\bbC \id_{\bbC ^{6}}$ or $\mathrm{M}_{6}(\bbC
)$. On the other hand, $\eta(I)\circ\cE_1
=[I,\cE_1]\subseteq I\cap\cE_1=0$, so $I=0$.  Thus, $\cE$ is simple.

Since $\eta$ maps $\cH:=\spann_\bbC\set{h_1,\ldots,h_{6}}$ to the set of
diagonal matrices, $\cH$ is an abelian Cartan subalgebra of
$\cE_0$.  Also,
$t=\iota^+(\id_{\bbC ^{6}})-2h_0\in\cH$ has $\ad(t)= \ad(h_0)$ on $\cE$, so the
normalizer of $\cH$ in $\cE$ is contained in $\cE_0$ and
hence equals $\cH$. Thus, $\cH$ is an abelian Cartan
subalgebra of $\cE$.  It is clear that $\mathrm{ad}(h)$ is
diagonalizable  on $\cE$ for $h\in\cH$, so we have $\cE$
$=\bigoplus_{\mu\in\cH^*}$ $\cE(\mu)$, where
\[
\cE(\mu)=\{x\in\cE:[h,x]=\mu(h)x \text{ for } h\in\cH\}
\]
for $\mu\in\cH^*$. Let
$\Sigma=\{\mu\in\cH^*:\mu\neq0,\cE(\mu)\neq0\}$, so
$\cE$ $=\bigoplus_{\mu\in\Sigma\cup\{0\}}$ $\cE(\mu)$ with
$\cE(0)=\cH$.  Now let $\ep_{i}
\in\cH^*$ with $\ep_{i}(h)=a_{i}$ where
$\eta(h)=\diag(a_1,\ldots,a_{6})$. It is easy to see that the
elements $\mu\in\Sigma$ and the corresponding root spaces $\cE
(\mu)=\bR x_{\mu}$ are:
\begin{align}
\mu &  =\ep_{i}-\ep_{j},\ i\neq j, &
\text{with }x_{\mu} &  =\iota^+(E_{ij});\label{eq:E6roota}\\
\mu &  =\sg(\ep_{i}+\ep_{j}
+\ep_{k}),\ i< j < k,\ \sg=\pm, &
\text{with }x_{\mu} &  =v_{\{i,j,k\}}^\sg;\label{eq:E6rootb}\\
\mu &  =\sg(\ep_1+\ep_2
+\ep_{3}+\ep_{4}+\ep
_{5}+\ep_{6}),\ \sg=\pm, & \text{with }x_{\mu} &
=v_{[1,6]}^\sg.\label{eq:E6rootc}
\end{align}

To show that  $\cE$ has type $\Esix$, let $\mu_{i}
=\ep_{i}-\ep_{i+1}\text{ for }1\leq
i\leq5$ and $\mu_{6}=\ep_{4}+\ep
_{5}+\ep_{6}$; let $\Pi=\{\mu_1,\ldots,\mu_{6}\}$; and let
$A_{ij} $ be the Cartan integer of the pair $(\mu_{i},\mu_{j})$ for $1\leq
i,j\leq6$. An examination of the $\mu_{j}$-string through $\mu_{i}$ shows that
$A=(A_{ij})$ is the Cartan matrix of type $\Esix$.

Let $h_\mu = [x_\mu,x_{-\mu}]$ for
$\mu \in \Sigma$.   To show that $B_{\cE}$ is a Chevalley basis, we need to show
\cite[p.147]{H}
\begin{description}
\item[(a)] $[h_{\mu},x_{\mu}]=2x_{\mu}$  for $\mu\in \Sigma$,
\item[(b)] $h_{\mu_{i}}=h_{i}$, $i=1,\dots,6$,
\item[(c)] the linear map with $x_{\mu}\rightarrow-x_{-\mu}$, $h_{i}
\rightarrow-h_{i}$ is an automorphism of $\cE$.
\end{description}
By (\ref{eq:EvSvT}), we have
\begin{align*}
h_{\mu} &  =\iota^+(E_{ii}-E_{jj}) &
\text{for $\mu$ as in \eqref{eq:E6roota}};\\
h_{\mu} &  =\sg\iota^+(E_{ii}+E_{jj}+E_{kk})-\sg h_0 &
\text{for $\mu$ as in \eqref{eq:E6rootb}};\\
h_{\mu} &  =\sg\iota^+(\id_{\mathrm{M}_{6}(\bbC)})-\sg 2h_0 &
\text{for $\mu$ as in \eqref{eq:E6rootc}},
\end{align*}
and (a) and (b) follow.  For (c) let $\theta=(\theta^-,\theta^+)$ where
$\theta^\sg:M^\sg\rightarrow M^{-\sg}$ with $\theta^{\sg
}(v_{i}^\sg)=v_{i}^{-\sg}$, so $\psi_{\theta}$ described in
Remark \ref{rem:isoE}(i) and
$\zeta$  described in Remark \ref{rem:isoE}(ii) are isomorphisms
$\tE(M^-,M^+,g)\rightarrow
\tE(M^+,M^-,g)$.  For $A\in\mathrm{M}_{6}(\bbC )$, we have
$\zeta^{-1}
\psi_{\theta}(-A^*,A)=\zeta^{-1}(-A^*,A)=(A,-A^*)$
and
$\zeta^{-1}\psi_{\theta}(h_0)=\zeta^{-1}(h_0^{op})=-h_0$.  Thus,
$\zeta^{-1}\psi_{\theta}(x_{\mu})=-x_{-\mu}$ for $\mu=\ep
_{i}-\ep_{j}$ and $\zeta^{-1}\psi_{\theta}(h_{i})=-h_{i}$.  Also, if
$S=\{i<j<k\}$, then $\psi_{\theta}$ interchanges $v_{S}^+=v_{i}^+v_{j}
^+v_{k}^+$ with
$v_{i}^-v_{j}^-v_{k}^-=-v_{k}^-v_{j}^-v_{i}^-=-v_{S}^-$, so $\zeta^{-1}\psi_{\theta}(v_{S}^\sg)=-v_{S}^{-\sg}$.  Similarly, $\zeta^{-1}\psi_{\theta}(v_{[1,6]}^\sg)=-v_{[1,6]}
^{-\sg}$ and (c) holds.
\end{proof}

 \begin{theorem}\label{thm:E6}
Suppose $\bR $ is a unital commutative
ring and $(M^-,M^+,g)$ satisfies \eqref{eq:condg6}. Let $\cE = \cE(M^-,M^+,g)$.  Then
 \begin{itemize}
\item[(i)]  $\cE$ is a form of the Chevalley algebra of type $\Esix$ and, if
$M^-$ and $M^+$ are free, $\cE$ is the Chevalley algebra of type $\Esix$.
\item[(ii)] $\cE/Z(\cE)$ is a form of the split Lie algebra of type $\Esix$ which is
split if $M^-$ and $M^+$ are free.
\item[(iii)] If $\frac 13\in \bR$, then $Z(\cE) = 0$.
\item[(iv)] If $\bR$ is a field of characteristic $\ne 3$,
$\cE$ is central simple.
\end{itemize}
\end{theorem}

\begin{proof} (i):  By Remarks \ref{rem:freenotation} and  \ref{rem:Galois}, we can assume that  $(M^-,M^+,g)=(\bR ^{6},\bR ^{6},\cdot)$ and show that $\cE$ is the Chevalley algebra of type $\type{E}{6}$.
Then it is clear that the $\bbZ$-linear map
taking elements of the basis $B_\cE$ in $\cE(\bbZ^{6},\bbZ^{6},\cdot)$
to the corresponding
elements of  the basis $B_\cE$ in
$\cE(\bbC ^{6},\bbC ^{6},\cdot)$
is an injective  $\bbZ$-algebra homomorphism.
We use this map to view
$\cE(\bbZ^{6},\bbZ^{6},\cdot)$
as the $\bbZ$-span of the Chevalley basis $B_{\cE}$
of $\cE(\bbC ^{6},\bbC ^{6},\cdot)$.  Since
$\cE(\bR ^{6},\bR^{6},\cdot)\cong\cE(\bbZ^{6},\bbZ^{6},\cdot)_{\bR }$ by Lemma \ref{lem:extendE},
$\cE(\bR ^{6},\bR^{6},\cdot)$ is the Chevalley algebra.

(ii) follows from (i) and Remark \ref{rem:chevalleyform}.

(iii):  By Lemma \ref{lem:form} and Remark \ref{rem:Galois},
we can assume that $(M^-,M^+,g)=(\bR ^{6},\bR ^{6},\cdot)$.  Then a direct calculation
which we leave to the reader shows that
$Z(\cE) = \set{c\, h_M : c\in \bR,\ 3c = 0}$, which is 0 when $\frac 13\in \bR$.

(iv): By (i) and (iii), $\cE$ is the Chevalley algebra of type $\Esix$
and $Z(\cE) = 0$.  Hence, by \cite[2.6(5)]{St},
$\cE$ is simple.  Moreover, if $\bbF$ is a field containing $\bR$,
then $\cE_\bbF \simeq \cE\tripF$ by Lemma \ref{lem:extendE}, so $\cE_\bbF$ is simple
over $\bbF$.  Thus, $\cE$ is central simple
by \cite[Thm.~II.1.7.1]{Mc}.
\end{proof}

\begin{remark}
The article \cite{F} also uses exterior  algebras to construct forms of Chevalley algebras of exceptional type.
However, the Lie algebras obtained there have natural $\bbZ_3$-gradings rather than the $5$-gradings
and $\BCtwo$-gradings that we need in order to construct Kantor pairs.
\end{remark}

\subsection{The Kantor pair $\Wg_3$}
\label{subsec:Wg3}
We now use the results of Subsection \ref{subsec:E6Lie} to construct a Kantor pair $\Wg_3$, with simply described underlying modules and products.

\begin{theorem}
\label{thm:Wg3}
Suppose $\bR $ is a unital commutative ring containing $\frac 16$ and $(M^-,M^+,g)$ satisfies \eqref{eq:condg6}.
Let
$\Wg_{3}=\Wg_{3}(M^-,M^+,g):=(\Wg
_{3}(M^-),\Wg_{3}(M^+))$
with trilinear products
\[
\{x^\sg y^{-\sg}z^\sg\}^\sg=E^\sg(x^\sg,y^{-\sg
})\circ z^\sg-(x^\sg\cdot y^{-\sg})z^\sg.
\]
(See Subsections \ref{subsec:dotact}--\ref{subsec:BA} for the notation used
here.)
Then
\begin{itemize}
\item[(i)] $\Wg_{3}$ is a form of a split Kantor pair of type $\Esix$, which is split if
    each $M^\sg$ is free.  Also $\Wg_3$ is tightly enveloped by the $5$-graded
Lie algebra $\cE = \cE(M^-,M^+,g)$, so
$ \Kan(\Wg_3)  \simeq \cE$
as $5$-graded Lie algebras.
\item[(ii)] The Jordan obstruction of $\Wg_3$ is isomorphic to $(\Wg^6(M^-), \Wg^6(M^+))$
with products $\set{p,q,r}^\sg = 2 (p \cdot q) r = (p \cdot q) r+  (r \cdot q) p$.

\item[(iii)]   If
$\bR $ is a field, $\Wg_{3}$ is a central simple split Kantor pair of type
$\Esix$ of balanced dimension $20$ and balanced $2$-dimension $1$.
\end{itemize}
\end{theorem}

\begin{proof}
(i): It follows from \eqref{eq:3prod} (with $i=1$) that the trilinear pair
$\Wg_3$ is the Kantor pair enveloped by $\cE$, and in particular
it is a Kantor pair.  The fact that $\cE$ tightly envelops
$\Wg_3$ follows from Proposition \ref{prop:E0fgp} and Theorem \ref{thm:E6}(ii).
So $\Kan(\Wg_3) \simeq \cE$ as $5$-graded Lie algebras by Corollary \ref{cor:Kanchar}.
Hence by Lemma \ref{lem:splitform} and Theorem \ref{thm:E6}(ii),
$\Wg_{3}$ is a form of a split Kantor pair of type $\Esix$;
and this form is split if  each $M^\sg$ is free by
Theorem \ref{thm:E6}(i).

(ii):
By (i) and \eqref{eq:JPL},
$J(\Wg_3) \simeq (\cE_{-2}, \cE_2) = (\Wg_6(M^-),\Wg_6(M^+))$  under the products $[[X,Y],Z]$ in $\cE$.  The formulas for the products now  follow from
\eqref{eq:3prod2}.

(iii)  $\Wg_3$ is central simple by Theorem \ref{thm:E6}(iii) and Theorem
\ref{thm:LdetP}(iii).
The dimension statements follow from the definition of $\Wg_3$ and (ii).
\end{proof}

\begin{remark}  \label{rem:E6example}
Suppose  that $\bR$ is an algebraically closed  field of characteristic $0$.

(i)  The weighted Dynkin diagram corresponding to $\Wg_3$ in Kantor's classification
(see Remark \ref{rem:fdclass}) is the first diagram below, whereas the one corresponding to its reflection $\Wgref$
(described in Subsection \ref{subsec:BCtwoE}) is the second  diagram.
%%%%%%%Sine weighted Dynkin diagrams
\newcommand\dynkindot{\circle*{.14}}
\setlength{\unitlength}{20pt}
\begin{minipage}{.45\textwidth}
\centering
\begin{picture}(4.2,2.2)(0,-.8)
%Lines
\put(-.2,0){ \line(1,0){4} }  %Adjusted for apparent horizontal bug
\put(1.85,0){ \line(0,1){1} } %Adjusted for apparent horizontal bug
%%Points
\put(0,0){\dynkindot}
\put(1,0){\dynkindot}
\put(2,0){\dynkindot}
\put(3,0){\dynkindot}
\put(4,0){\dynkindot}
\put(2,1){\dynkindot}
%Labels
	\put(-.10,-.5){$0$} %mu1
	\put(.9,-.5){$0$}  %mu2
	\put(1.9,-.5){$0$} %mu3
	\put(2.9,-.5){$0$} %mu4
	\put(3.9,-.5){$0$} %mu5
	\put(2.1,1){$1$}  %mu6
%\label{fig:one}
\end{picture}
%%%%Reflected Dynkin diagram
\end{minipage}%
\begin{minipage}{.45\textwidth}
\centering
\begin{picture}(4.2,2.2)(0,-.8)
%Lines
\put(-.2,0){ \line(1,0){4} }   %Adjusted for apparent horizontal bug
\put(1.85,0){ \line(0,1){1} }  %Adjusted for apparent horizontal bug
%%Points
\put(0,0){\dynkindot}
\put(1,0){\dynkindot}
\put(2,0){\dynkindot}
\put(3,0){\dynkindot}
\put(4,0){\dynkindot}
\put(2,1){\dynkindot}
%Labels
	\put(-.1,-.5){$0$} %mu1
	\put(.9,-.5){$1$}  %mu2
	\put(1.9,-.5){$0$} %mu3
	\put(2.9,-.5){$0$} %mu4
	\put(3.9,-.5){$0$} %mu5
	\put(2.1,1){$0$}  %mu6
%\label{fig:one}
\end{picture}
\end{minipage}%

(ii)  The Kantor pair $\Wg_{3}$ is a simplified and basis free version
of the double of the KTS that was described by Kantor without full proofs in \cite[(6.11) and
\S 6.6]{K1}.

$\Wg_{3}$  is also isomorphic to the signed double of a $(1,1)$-Freudenthal-Kantor triple system.  Indeed,
Elduque and Kochetov have defined the structure of a symplectic triple system on
$\cT = \Wg^3(V)$ (although it appears that
the scalar -24 used in this definition should be replaced by -2), where $V$ is a $6$-dimensional space \cite[\S 6.4]{EK} . The signed double of the
corresponding $(1,1)$-Freudenthal triple system  is a Kantor pair $\cP(\cT)$ (see Example \ref{ex:symplectic}) that can be shown directly
to be isomorphic to $\Wg_3$.

Finally, it can be seen from Kantor's classification that
$\Wg_{3}$  is isomorphic to
the double of the structurable algebra
$\smat{\bR  & J\\
J & \bR }$,
where $J$ the Jordan algebra of $3\times3$-matrices over $\bR$
(see \cite[\S 2]{K2} and \cite[\S 8]{A1}).
However from this matrix point of view a natural non-trivial SP-grading is not  apparent to us.
\end{remark}

\subsection{The Kantor pair $\Wgref$} \label{subsec:BCtwoE}
Suppose from now on that $(M^-,M^+,g,e)$ is a quadruple satisfying
\begin{equation}  \label{eq:condge}
\parbox{4truein}{\centering $g:M^-\times M^+\rightarrow\bR$ is a non-singular bilinear form, \\
$M^-$ and $M^+$ are f.g.p.~modules of rank $6$, and\\
$e=(e^-,e^+)\in M^-\times M^+$ satisfies $g(e^-,e^+)=1$.}
\end{equation}
As  in Subsection \ref{prop:BC2skew}, we now use $e$ to define a $\BCtwo$-grading on $\cE$.

Once again, we have $M^\sg=\bR e^\sg\oplus
U^\sg$, where $U^\sg=(e^{-\sg})^\perp$ relative to $g$ in $M^\sg$.
We write  $M^\sg$ as
$\smat{\bR e^\sg\\
U^\sg}$, and correspondingly identify
\[\End(M^\sg) =
\mat{  \End(\bR e^\sg) & \Hom(U^\sg,\bR e^\sg)\\
\Hom(\bR e^\sg,U^\sg) & \End(U^\sg)  }.\]

\begin{proposition}
\label{prop:BC2E6}
$\cE=\bigoplus_{(i_1,i_2)\in\bbZ^2}~\cE
_{i_1,i_2}$ is a $\BCtwo$-grading of the Lie algebra $\cE$, where
\begin{equation}\label{eq:BC2E6}
\begin{gathered}
\cE_{\sg1,0}
=\Wg_{3}(U^\sg),\quad \cE_{\sg1,\sg1}
=\Wg_2(U^\sg)e^\sg,\\
\quad  \cE_{\sg2,\sg1}
=\Wg_{6}(M^\sg),\\
\cE_{0,0}
=\cE_0\cap\big(\iota^+(\End(\bR e^\sg)\oplus\End(U^\sg))+\bR h_0\big),\\
\cE_{0,1}
=\iota^+(\Hom(U^+,\bR e^+)),\quad  \cE_{0,-1}
=\iota^+(\Hom(\bR e^+,U^+)),
\end{gathered}
\end{equation}
and $\cE_{i_1,i_2}=0$ for all other pairs $(i_1,i_2) \in\bbZ^2$.
Moreover the first component grading of this $\BCtwo$-grading
is the $5$-grading of $\cE$ in Subsection \ref{subsec:E6Lie}.
\end{proposition}

\begin{proof}  We follow the proof of Proposition \ref{prop:BC2skew} with $\ffo(\tg)$ replaced by $\cE$.  As in the last paragraph of that proof, we can (by a base ring extension argument using Lemma \ref{lem:extendE})
assume that there exists $t\in \bR$ such that $(t^i-t^j)x = 0,\ x \in \cE \implies x = 0 \text{ or } i = j.$
Let $\theta^\sg \in \GL(M^\sg)$ be left multiplication by
$\smat{t^{\sg1} & 0\\0 & 1}$.
Since $\theta=(\theta^-,\theta^+)$ is an automorphism of $(M^-,M^+,g)$, it induces an automorphism $\psi_{\theta}$ of
$\cE(M^-,M^+,g)$
as in Remark  \ref{rem:isoE}(i).  The proof now proceeds as
in \ref{prop:BC2skew}.
\end{proof}

By Proposition \ref{prop:BC2E6} and Remark \ref{rem:BC2vs5gr}, we have the following result which describes
an SP-grading on the Kantor pair
$\Wg_3$ constructed in Theorem \ref{thm:Wg3}.

\begin{proposition}
\label{prop:SPWg3}  Suppose $\frac 16\in \bR$, and let
\begin{equation}
(\Wg_{3})_0^\sg=\Wg_{3}(U^\sg)\text{ and }(\Wg_{3})_1^\sg=\Wg_2(U^\sg)e^{\sg
}.\label{eq:Wg3SP1}
\end{equation}
Then $\Wg_{3}=(\Wg_{3})_0\oplus
(\Wg_{3})_1$ is an SP-graded Kantor pair, which is tightly enveloped
by the $\BCtwo$-graded Lie algebra $\cE$ described in Proposition \ref{prop:BC2E6}.
\end{proposition}

We now have the main result about Kantor pairs in this section.

\begin{theorem}
\label{thm:E6brv}
 Suppose $\bR $ is a unital commutative
ring containing $\frac 16$ and $(M^-,M^+,g,e)$ satisfies \eqref{eq:condge}.  Then
\begin{itemize}  \item[(i)]
The reflection $\Wgref$ of the SP-graded Kantor pair $\Wg_3$ described
in Proposition \ref{prop:SPWg3}  is an SP-graded form of a split Kantor pair of type $\Esix$ which is split if each $M^\sg$ is free.
\item[(ii)] The Jordan obstruction $J$ of $\Wgref$ is isomorphic
to $U^\op$, where $U = (U^-,U^+)$ with products
$\set{u^\sg,v^\msg,w^\sg}^\sg = g(u^\sg,v^\msg) w^\sg + g(w^\sg,v^\msg) u^\sg$.
\item[(iii)] If
$\bR $ is a field, $\Wgref$ is a central simple split Kantor pair of type
$\Esix$ of balanced dimension $20$ and balanced $2$-dimension $5$.
\end{itemize}
\end{theorem}

\begin{proof}

(i) follows from Theorem \ref{thm:Wg3} and Proposition \ref{prop:simplereflect}.

(ii):  By Proposition \ref{prop:obstruct}, Theorem \ref{thm:Wg3}(i) and
Proposition \ref{prop:BC2E6},
\[J^\op \simeq (\cE_{0,1}, \cE_{0,-1}) = \big(\iota^+(\Hom(U^+,\bR e^+)),\iota^+(\Hom(\bR e^+,U^+))\big)\]
under the products $[[X,Y],Z]$ in $\cE_{0,*}$.
Since  there are natural module isomorphisms
$U^- \simeq \Hom(U^+,\bR e^+) $ and $U^+ \simeq \Hom(\bR e^+,U^+)$ (induced by $g$ in the first case),
our conclusion is easily checked.

(iii):  $\Wgref$ is central simple split Kantor pair of type
$\Esix$ by Corollary \ref{thm:Wg3}(iii) and
Proposition \ref{prop:simplereflect}.
Also since $(\Wgref)^\sg = \Wg_{3}(U^\msg) \oplus \Wg_2(U^\sg)e^\sg$,
$\Wgref$ has balanced dimension $10 + 10$; while $\Wgref$ has balanced
$2$-dimension $5$ by (ii).
\end{proof}

If $(M'^-,M^+,g',e')$ is another quadruple satisfying \eqref{eq:condge}, an \emph{isomorphism}
of  $(M^-,M^+,g,e)$ onto $(M'^-,M'^+,g',e')$
is an isomorphism of $(M^-,M+,g)$ onto $(M'^-,M^+,g')$ which maps $e$ onto $e'$.
If such an isomorphism exists, one sees using Remark \ref{rem:isoE}(i)
that the $\BCtwo$-graded Lie algebras $\cE$ and $\cE'$ constructed above
are graded-isomorphic; and, if $\frac 16\in \bR$, the SP-graded Kantor pairs
$\Wg_3$ and  $\Wg'_3$ are graded isomorphic, as are the
SP-graded pairs $\Wgref$ and  $(\Wg'_3)\brv$.

\begin{remark}
\label{rem:BC2E} Suppose $\bR$ is a field. Then one easily checks that
any two quadruples satisfying \eqref{eq:condge}  are isomorphic.
Hence \emph{the $\BCtwo$-graded Lie algebra $\cE$ constructed above
is independent up to graded isomorphism of the choice of
$(M^-,M^+,g,e)$;
and, if $\frac 16\in \bR$, so too are the SP-graded Kantor pairs
$\Wg_3$ and $\Wgref$}.
\end{remark}

\begin{remark}
\label{rem:Wg3refb}  Suppose  $\bR $ is an algebraically closed field
of characteristic 0. The construction of $\Wgref$
given above is a simple new basis free construction of the double of the KTS
$C_{55}^2$ constructed by Kantor without full proofs in \cite[\S 4]{K2} and \cite[\S 6.6]{K1}.   The pair $\Wgref$ is of
particular interest since it is one of the two  split simple Kantor pairs of
exceptional type that does not arise by doubling a structurable algebra---the
other being the famous Jordan pair $(M_{1,2}(C),M_{1,2}(C^{op}))$ determined
by a Cayley algebra~$C$ \cite[\S 8.15]{L}.
\end{remark}

\subsection{An example using rank 1 modules}
\label{subsec:Dedekind}
If $I$ is an  f.g.p.~module of rank 1, set
\[M_I = \bR^5 \oplus I \andd e_I = (e^-_I,e^+_I) \in (M_I^*,M_I),\]
where $e^+_I = (1,0,0,0,0)$ , $e^-_I(a_1,\dots,a_5) = a_1$ and $e^-_I|_I = 0$.
Then $(M_I^*,M_I,\text{can})$ satisfies \eqref{eq:condg6} and  $(M_I^*,M_I,\text{can},e_I)$ satisfies \eqref{eq:condge}  (see Remark \ref{rem:onemodule}(ii)).
Let $\cE_I$ be the $\BCtwo$-graded Lie algebra
constructed   from $(M_I^*,M_I,\text{can},e_I)$ in Proposition \ref{prop:BC2E6}.
We also regard  $\cE_I$ as a $5$-graded Lie algebra with the first component grading.
If $\frac 16 \in \bR$, let $\Wg_{3,I}$ be the SP-graded Kantor pair
constructed  from
$(M_I^*,M_I,\text{can},e_I)$ in Proposition \ref{prop:SPWg3}.

Suppose that $I$ and $I'$ are f.g.p.~modules of rank $1$.  We now show that
\begin{equation}
\label{eq:isoI1}
\cE_I \simeq_{\BCtwo} \cE_{I'} \iff \cE_I \simeq_{\text{5-gr}} \cE_{I'}  \iff I \simeq I',
\end{equation}
where $\simeq_{\BCtwo}$ (resp.~$\simeq_{\text{5-gr}}$) indicates isomorphism as
$\BCtwo$-graded (resp.~$5$-graded) Lie algebras.
Indeed, denoting these statements by (a), (b) and (c) in order, it is clear that
(a) implies (b).  Suppose (b) holds.  Then $(\cE_I)_2 \simeq (\cE_{I'})_2$ as $\bR$-modules
so $\Wg_6(M_I) \simeq  \Wg_6(M_{I'})$.  But
$\Wg_6(M_I) = \Wg_6(\bR^5 \oplus I) \simeq \Wg_5(\bR^5)\otimes \Wg_1( I) \simeq R \otimes I \simeq I$ using \cite[III.7.7,Prop.~10]{B2}, and we have (c).  Suppose finally that (c) holds.  Then the quadruples $(M_I^*,M_I,\text{can},e_I)$ and
$(M_{I'}^*,M_{I'},\text{can},e_{I'})$ are isomorphic, so (a) holds (as noted after Theorem \ref{thm:E6brv}).

Suppose next that $\frac 16\in \bR$ and
$I,I'$ are f.g.p.~modules of rank $1$.
We now show that
\newcommand\compactiff{\hspace{-.5em}\iff\hspace{-.1em}}
\begin{align}
\label{eq:isoI2}
\begin{split}
(\Wg_{3,I}) \brv \simeq (\Wg_{3,I'})\brv
\compactiff
(\Wg_{3,I})\brv \simeq_{\text{SP}} (\Wg_{3,I'})\brv
&\compactiff
\Wg_{3,I} \simeq_{\text{SP}} \Wg_{3,I'},\\
&\compactiff
\Wg_{3,I} \simeq\Wg_{3,I'}
\compactiff
I \simeq I',
\end{split}
\end{align}
where $\simeq_{\text{SP}}$  indicates isomorphism as
SP-graded Kantor pairs.
To see this, we denote these statements by ($\alpha$), ($\beta$), ($\gamma$),  ($\delta$) and ($\varepsilon$) in order.
Then ($\gamma$) (resp.~($\delta$)) is equivalent  to (a) (resp.~(b)) in \eqref{eq:isoI1}. Also the equivalence of ($\beta$) and ($\gamma$) is clear (for example from Proposition \ref{prop:reflect}). So ($\beta$), ($\gamma$), ($\delta$) and ($\varepsilon$) are equivalent.  Since $(\beta)$ implies $(\alpha)$, it is enough now to check that $(\alpha)$ implies $(\varepsilon)$. But if $(\alpha)$ holds, then
$J((\Wg_{3,I}) \brv) \simeq J((\Wg_{3,I'}) \brv)$, so $\bR^4 \oplus I \simeq \bR^4 \oplus I'$  by Theorem \ref{thm:E6brv}(ii). Hence, $\Wg_5(\bR^4 \oplus I) \simeq \Wg_5(\bR^4 \oplus I')$, so $I \simeq I'$ as above.

\emph{Suppose from now on that $\bR$ is a Dedekind domain}. Let
 $\Pic(\bR)$ (the \emph{Picard group} of $\bR$) be the group
of all isomorphism classes of f.g.p.~modules of rank $1$ under the product induced from the tensor product.
Recall that
$\Pic(\bR)$ is naturally isomorphic to the \emph{ideal class group} $\mathfrak C(\bR)$  of $\bR$   \cite[II.5.7, Prop.~12]{B2}.

Recall further that   any f.g.p.~module of rank $m\ge 1$ is isomorphic to $\bR^{m-1}\oplus I$
for   some f.g.p.~module $I$ of rank $1$ (see for example \cite[\S 1.3]{Na}). Using this fact with $m=6$ (resp.~$m=5$) it is easy to see that any triple satisfying  \eqref{eq:condg6}
(resp.~ any quadruple satisfying  \eqref{eq:condge})
is isomorphic to $(M_I^*,M_I,\text{can})$
(resp.~$(M_I^*,M_I,\text{can},e_I)$) for some $I$.

Therefore all of the  following algebraic structures are obtained from some
$I$ as above:
(i) $5$-graded Lie algebras $\cE$ in Theorem \ref{thm:E6};
(ii) (ungraded) Kantor pairs $\Wg_3$ in Theorem \ref{thm:Wg3};
(iii)
$\BCtwo$-graded Lie algebras $\cE$ in Proposition \ref{prop:BC2E6};
(iv) SP-graded Kantor pairs $\Wg_3$ in Proposition \ref{prop:SPWg3};
(v) SP-graded pairs $\Wgref$ in Theorem \ref{thm:E6brv}; and
(vi) (ungraded) Kantor pairs $\Wgref$ in Theorem \ref{thm:E6brv}.
So, by \eqref{eq:isoI1} and \eqref{eq:isoI2},
the sets of graded-isomorphism classes for each of the families (i), (iii), (iv) and (v), as well as the
sets of isomorphism classes for the families (ii) and (vi), are each in 1-1 correspondence with $\Pic(\bR)$.

Since any abelian group arises as the ideal class group of some Dedekind domain \cite[Thm.~1.4]{L-G},
we see that we have many examples of the indicated structures.

\end{document}